\title{Motivic toposes\\ {\small A research programme}}
\author{Olivia Caramello}
\date{July 22, 2015}
\mathchardef\semicolon="603B 
\mathchardef\gt="313E
\mathchardef\lt="313C
\newcommand{\ac}
{`}
\newcommand{\cod}
 {{\rm cod}}
\newcommand{\comp}
 {\circ}
\newcommand{\Cont}
 {{\bf Cont}}
\newcommand{\dom}
 {{\rm dom}}
\newcommand{\empstg}
 {[\,]}
\newcommand{\epi}
 {\twoheadrightarrow}
\newcommand{\hy}
 {\mbox{-}}
\newcommand{\im}
 {{\rm im}}
\newcommand{\imp}
 {\!\Rightarrow\!}
\newcommand{\Ind}[1]
 {{\rm Ind}\hy #1}
\newcommand{\mono}
 {\rightarrowtail}
\newcommand{\name}[1]
 {\mbox{$\ulcorner #1 \urcorner$}}
\newcommand{\ob}
 {{\rm ob}}
\newcommand{\op}
 {^{\rm op}}
\newcommand{\Set}
 {{\bf Set }}
\newcommand{\Sh}
 {{\bf Sh}}
\newcommand{\sh}
 {{\bf sh}}
\newcommand{\Sub}
 {{\rm Sub}}
\newtheorem{theorem}{Theorem}[section]
\theoremstyle{definition}
\newtheorem{definition}[theorem]{Definition}
\newtheorem{lemma}[theorem]{Lemma}
\newtheorem{proposition}[theorem]{Proposition}
\newtheorem{corollary}[theorem]{Corollary}
\newtheorem{remark}[theorem]{Remark}
\newtheorem{remarks}[theorem]{Remarks}
\begin{document}

\maketitle

\begin{abstract}
We present a research programme aimed at constructing classifying toposes of Weil-type cohomology theories and associated categories of motives, and introduce a number of notions and preliminary results already obtained in this direction. In order to analyze the properties of Weil-type cohomology theories and their relations, we propose a framework based on atomic two-valued toposes and homogeneous models. Lastly, we construct a syntactic triangulated category whose dual maps to the derived categories of all the usual cohomology theories. 
\end{abstract}

\tableofcontents

\section{Introduction}

In this work we describe a research programme aimed at constructing `motivic toposes', i.e. classifying toposes of cohomology theories of a certain type (as appropriately formalized within geometric logic). The term `motivic topos' was coined by Luca Barbieri-Viale, who 
observed that there is a formal similarity between the universal property of the classifying topos of a geometric theory and the universal factorization property in the theory of motives and asked whether one could build categories of (mixed) motives by taking the category of abelian groups inside such a topos. 

Broadly speaking, the basic foundational questions underlying the theory of \ac motives' can be summarized as follows:

\begin{itemize}
\item Is it possible to define a \ac universal cohomology theory' from which all the usual Weil-type cohomology theories can be derived?

\item What are the relationships existing between the different cohomology theories?

\item To which extent can one \ac remount' from cohomology to geometry?
\end{itemize}

First-order categorical logic provides a convenient mathematical setting for giving a precise meaning to such questions and looking for possible answers to them. More specifically, the geometry of schemes can be formalized by means of appropriate (many-sorted) languages over which the usual axioms for cohomology theories can be written. The different cohomology functors then become models of the resulting theory. The relationships between them and the geometry of schemes thus acquires a logical interpretation, as the relationships between a theory and its class of (set-based) models. This is a natural subject matter for logicians; for example, the notion of completeness of a first-order theory captures this phenomenon quite precisely, as it means that any two set-based models satisfy exactly the same first-order properties expressible in the language of the theory (we shall come back later to the topic of completeness, approaching it from the perspective of classifying toposes).

The theory of syntactic categories and their completions, including classifying toposes, allows to classify the models of (suitable kinds of) first-order theories by means of universal objects lying in them. Indeed, the syntactic category of a theory $\mathbb T$ contains a universal model $U$ of $\mathbb T$ such that all the other models are obtained, uniquely from it up to isomorphism, as the image of $U$ under a functor preserving the relevant kind of structure. The universal model is built out of the syntax of the theory and satisfies the important property that whatever is valid in it is provable in the theory. It is the place where the syntax and semantics of the theory \ac meet', whence it constitutes a concrete \ac incarnation' of the relationships existing between the different models of the theory. Applying this to theories axiomatizing cohomology theories of a certain kind thus yields universal models of them which can be regarded as incarnations of motives.

Moreover, as remarked in \cite{OC11}, the theory of syntactic categories can be usefully applied to the problem of constructing structures presented by generators and relations. In fact, any syntactic category of a given theory $\mathbb T$ can be regarded as a structure presented by a set of `generators', given by the sorts in the signature of the theory $\mathbb T$, subject to `relations' expressed by the axioms of $\mathbb T$. Conversely, to any structure $\cal C$ one can attach a signature $\Sigma_{\cal C}$, usually called `the' \emph{internal language} of $\cal C$, to express `relations' holding in $\cal C$, consisting of one sort \name{c} for each element $c$ of $\cal C$ and function or relation symbols whose canonical interpretation in $\cal C$ coincide with specified functions or subobjects in $\cal C$ in terms of which the designated `relations' holding in $\cal C$ can be formally expressed; over such a canonical signature one can then write down axioms so to obtain a theory whose syntactic category ${\cal C}^{\cal S}_{\mathbb T}$ satisfies the universal property of `the free structure on $\cal C$ subject to the relations $R$'. 

Classifying toposes are particularly relevant in this context since they satisfy similar universal properties but, unlike the latter, they possess all small limits, colimits and exponentials and hence can be used for performing constructions which \emph{a priori} are not possible in smaller syntactic categories, as well as for investigating whether a given syntactic category is closed under (any of) these constructions. 

Also, allowing models of theories in arbitrary toposes, rather than just in the classical category of sets, gives the possibility in particular of considering \ac (co)homology theories in a topos', in the sense of models in that topos of a theory axiomatizing (co)homology theories. It can be the case that cohomology theories with particular properties exist in specific toposes whilst there are no such theories in the classical set-based context. For instance, it is well-known that a cohomology theory with coefficients in $\mathbb Q$ which \ac generates' all the usual cohomology theories by change of base does not exist (at least in characteristic $p$), but if we relax the requirement that the cohomology theory should be a set-based one to allow cohomology theories in toposes, we actually get the existence of such a theory (cf. section \ref{sec:motivictheories}).     

Moreover, as argued in \cite{OC10}, classifying toposes provide a natural unifying setting for comparing geometric theories which are defined differently but which have the same, or a strictly related, mathematical content. In fact, as we argue in section \ref{rem_independence}, $\ell$-adic and $\ell'$-adic cohomologies, for $\ell,\ell'$ different from the characteristic of the base field, should give rise to different representations of a unique \ac motivic topos'.    

For all these reasons, syntactic categories and classifying toposes seem to us particularly relevant in connection with the problem of building categories of motives.

In \cite{NoriSyntactic}, we began applying these ideas in the context of Nori motives. We showed that Nori's category of a representation $T$ of a diagram $D$ with values in $k$-vector spaces can be realized as the effectivization of the regular syntactic category of the theory $\textrm{Th}(T)$ consisting of all the regular sequents over the language $L_{D}$ of $D$ which are satisfied by $T$. Speaking in terms of generators and relations, we can describe Nori's category by saying that its generators are the objects and arrows of the diagram $D$ and the relations are the regular sequents satisfied by the representation $T:D \to k\textrm{-vect}$.

In the same paper we also gave necessary and sufficient explicit conditions for two representations $T$ and $T'$ of the same diagram to yield equivalent categories of Nori motives, interpreting them as the requirement that $\textrm{Th}(T)=\textrm{Th}(T')$. If $D$ is Nori's diagram (or some enlargement of it given for instance by our syntactic triangulated category introduced in section \ref{sec:triangcat} below) and $\cal K$ is a class of cohomology theories $T$ such that for any $T$ and $T'$ in $\cal C$, $\textrm{Th}(T)=\textrm{Th}(T')$ then the classifying topos ${\cal E}_{T}$ of the theory $\textrm{Th}(T)$ has as points all the theories in $\cal K$ as well as those which satisfy the same regular sequents over $L_{D}$ as them. The realisation functors (i.e., the inverse image functors of the point of it), restricted to the subcategory of ${\cal E}_{T}$ given by the effectivization of the regular syntactic category of $\textrm{Th}(T)$, are exact and faithful whence ${\cal E}_{T}$ is a good candidate for a motivic topos. Still, ${\cal E}_{T}$ depends on $T$ and has as points all the theories in $\cal C$ only \emph{conditionally} to the fact that $\textrm{Th}(T)=\textrm{Th}(T')$ for all $T, T'$ in $\cal K$. 

It is worth to note that the bounds on dimension of homology groups are not expressible in regular logic over $L_{D}$, whence the abelian $\mathbb Q$-linear category of Nori-type motives defined in \cite{NoriSyntactic} cannot distinguish between two cohomology theories which satisfy the same regular properties but whose groups have different dimensions. So, for instance, we could imagine that $\ell$-adic cohomology and higher $K$-theory give rise to the same category of Nori-type motives, in spite of having different dimensions in some degree. It would therefore be desirable to have a direct construction of a \ac motivic topos' which is independent from any particular realization $T$ and which is defined as the classifying topos of a geometric theory expressing the fundamental features of cohomology theories belonging to a certain class. We propose in the paper a strategy for constructing such a topos, based on the topos-theoretic interpretation of Fra\"iss\'e's construction established in \cite{OC2}. More specifically, we identify the framework of atomic two-valued toposes as a particularly suitable one for building motivic toposes; as shown in \cite{OC2}, these toposes are precisely the classifying toposes of atomic complete theories or, equivalently, of theories of homogeneous $\mathbb S$-models for a theory of presheaf type $\mathbb S$. 

To test our research strategy, we define in section \ref{sec:basictheorypresheaftype} a theory of presheaf type axiomatizing substructures of (co)homology theories of a certain kind (this theory is not supposed to be the definitive one, but a good \ac approximation' to it), and investigate in section \ref{sec:theoryhomogeneousmodels} the theory of its homogeneous models; if the usual Weil-type homology theories are homogeneous in this sense then the classifying topos of this theory will satisfy all the properties naturally expected of a \ac motivic topos'. We notably show that the exactness conditions for (co)homology theories \emph{follow} from homogeneity. In fact, homogeneity can be regarded as a strengthening of exactness for distinguished pairs of definable arrows over the signature of the theory; it is therefore not unreasonable to expect that all the usual (co)homology theories satisfy it.

In the final section of the paper we construct a syntactic triangulated category which maps into all the derived categories of sheaves associated to the usual (co)homology theories. This category extends Nori's quiver and satisfies the property, important for our purposes, that all the images of equationally definable arrows in it are equationally definable.  
 
This paper is partly written in an informal, programmatic style privileging intuitions over formal statements. In fact, since this research programme is a long-term one, we have chosen to share it with the community at this stage rather than waiting until the moment where the first concrete results come out.

\section{General considerations}

In this section we give a general description of our  programme for building \ac motivic toposes'. 

Let us start by recalling the fundamental notion of classifying topos, which plays a central role in our approach.

\subsection{Classifying toposes}\label{sec:classifyingtoposes}

The classifying topos of a geometric theory $\mathbb T$ is by definition, a Grothendieck topos ${\cal E}_{\mathbb T}$ satisfying the following universal property:
\[
\textbf{Geom}({\cal E}, {\cal E}_{\mathbb T})\simeq {\mathbb T}\textrm{-mod}({\cal E}),
\]
for any Grothendieck topos $\cal E$, naturally in $\cal E$, where $\textbf{Geom}({\cal E}, {\cal E}_{\mathbb T})$ is the category of geometric morphisms ${\cal E} \to {\cal E}_{\mathbb T}$ and ${\mathbb T}\textrm{-mod}({\cal E})$ is the category of $\mathbb T$-models in $\cal E$. In particular, taking $\cal E$ is equal to $\Set$, one gets that the points of ${\cal E}_{\mathbb T}$ correspond precisely to the classical (set-based) models of $\mathbb T$. 

By a theorem of Joyal-Makkai-Reyes, every \emph{geometric theory} (i.e., a first-order theory whose axioms can be presented ) has a classifying topos, which can be built as the category of sheaves $\Sh({\cal C}_{\mathbb T}, J_{\mathbb T})$ over the syntactic site $({\cal C}_{\mathbb T}, J_{\mathbb T})$. Recall that the category ${\cal C}_{\mathbb T}$ has as objects the geometric formulae-in-context $\{\vec{x}. \phi\}$ over the signature of $\mathbb T$ and as arrows $\{\vec{x}. \phi\}\to \{\vec{y}. \psi\}$ the $\mathbb T$-provable equivalence classes $[\theta]$ of $\mathbb T$-provably functional formulae $\theta(\vec{x}, \vec{y})$ from $\{\vec{x}. \phi\}$ to $\{\vec{y}. \psi\}$, while $J_{\mathbb T}$ is the canonical topology on ${\cal C}_{\mathbb T}$ (a sieve $\{[\theta_{i}]:\{ \vec{x_{i}}. \phi_{i}\}\to \{\vec{y}. \psi\} \textrm{ | } i\in I\}$ in ${\cal C}_{\mathbb T}$ is $J_{\mathbb T}$-covering if and only the sequent $(\psi \vdash_{\vec{y}} \mathbin{\mathop{\textrm{ $\bigvee$}}\limits_{i\in I}} (\exists \vec{x_{i}})\theta_{i}(\vec{x_{i}}, \vec{y}))$ is provable in $\mathbb T$). 

\subsection{Motivic toposes}

Applying the existence theorem for classifying toposes in the case of a geometric theory $\mathbb T$ formalizing cohomology theories $T$ of a certain type, one obtains a topos ${\cal E}_{T}$ containing a universal model $U$ of $\mathbb T$ such that every $T$ can be obtained from $U$ by applying a \ac realization functor' to it, namely the inverse image of the point of ${\cal E}_{T}$ given by $T$. Recall that inverse images of geometric morphisms of toposes preserve all finite limits and all small colimits. They are not faithful in general (geometric morphisms whose inverse image functor is faithful, equivalently conservative, are called \emph{surjections}). In fact, from a logical point of view, the faithfulness of the inverse image of the geometric morphism $f_{M}:{\cal E}\to {\cal E}_{\mathbb T}$ corresponding to a $\mathbb T$-model $M$ in $\cal E$ amounts precisely to the requirement that the model $M$ be \emph{conservative}, in the sense that every geometric sequent over the signature of $\mathbb T$ which is valid in $M$ is provable in $\mathbb T$.   

The basic requirement for a category $\cal M$ of (mixed) motives is that $\cal M$ should be a rigid $\mathbb Q$-linear Tannakian category with a representation $i$ of Nori's diagram (or of any other larger diagram formalizing schemes) into it with the property that there should be exact and faithful realisation functors $f_{T}:{\cal M} \to k\textrm{-vect}$ for each known Weil cohomology theory $T:D \to k\textrm{-vect}$ such that $f_{T}\circ i=T$.
 
The question thus naturally poses as to how to construct a geometric theory $\mathbb T$ formalizing (co)homology theories of a certain kind in such a way that an appropriate subcategory $\cal S$ of the category of internal $\mathbb Q$-vector spaces in its classifying topos satisfies the above-mentioned requirements for a category of (mixed) motives.

First of all, we notice that the signature $\Sigma$ of $\mathbb T$ should be an expansion of the signature having one sort $\name{X}$ for each scheme $X$ and the constants and function symbols on each $\name{X}$ formalizing the abelian group structure on them, so that every (co)homology theory can be regarded as a $\Sigma$-structure (in the sense of classical model theory). To formalize morphisms of homology or cohomology groups, one should add function symbols of the appropriate variance. For instance, $\Sigma$ can be taken equal to the language $L_{D}$ introduced in \cite{NoriSyntactic}, where $D$ is Nori's diagram, or it can be taken equal to the internal language of the syntactic triangulated category defined in section \ref{sec:triangcat}. One could also include in $\Sigma$ function symbols corresponding to scheme correspondences, since, as it is well-known, they induce morphisms at the level of (co)homology groups. 

We should mention that a signature of this kind for formalizing cohomology theories was already considered by A. Macintyre in \cite{Macintyre}.

There are two different possibilities for axiomatizing the vector space structures on the (co)homology groups: either one fixes the coefficients field (in which case, if one wants to talk about all the usual (co)homology theories, one is forced to take this field equal to $\mathbb Q$) or one leaves it variable, so to be able for instance to compare, using logical means, the dimensions of the groups of different (co)homology theories over their respective coefficients fields. For an approach implementing the first possibility, see section \ref{sec:bottomup}, while for an approach implementing the second, see section \ref{sec:topdown}. 

Given the fact that ${\cal E}_{\mathbb T}$, as any Grothendieck topos, possesses finite limits, coequalizers of equivalence relations, internal tensor products (of internal vector spaces over a given internal field) as well as internal duals of internal vector spaces over an internal field, it is natural to require $\cal S$ to be a full subcategory of ${\cal E}_{\mathbb T}$ which is closed with respect to finite limits and coequalizers of equivalence relations - so that the exactness requirement for the realisation functors follows at once from the universal property of the classifying topos - as well as with respect to internal duals and internal tensor products. Recall that a vector space $V$ over a field $K$ is said to be \emph{reflective} if the canonical map $V \to V^{\ast \ast}$ is an isomorphism. It is well-known that the category of reflective vector spaces over a field $K$ is a rigid abelian $K$-linear tensor category. This proof can be internalized to an arbitrary Grothendieck topos, yielding the following result: for any internal field $K$ in a Grothendieck topos $\cal E$, the category of reflective internal $K$-vector spaces in $\cal E$ (meaning the $K$-vector spaces $V$ in $\cal E$ such that the canonical arrow $V\to V^{\ast \ast}$ is an isomorphism, where the dual of $V$ is defined by taking the internal hom $V^{\ast}=\textrm{Hom}_{K}(V, K)$) is a rigid abelian tensor category. 

The closedness of $\cal S$ with respect to finite limits and coequalizers of equivalence relations can be obtained for instance by defining $\cal S$ equal to the abelian subcategory of the category of internal $\mathbb Q$-vector spaces generated by the objects and arrows in ${\cal E}_{\mathbb T}$ which interpret the sorts and function symbols of the signature of $\mathbb T$, as it was done for instance in \cite{NoriSyntactic}.

Given the fact that ${\cal E}_{\mathbb T}$ is the $\infty$-pretopos completion of the geometric syntactic category ${\cal C}_{\mathbb T}$ of $\mathbb T$, it is natural to seek a characterization of $\cal S$ as a (completion of a) syntactic category of $\mathbb T$ in a suitable fragment of logic to which $\mathbb T$ belongs (for instance, in \cite{NoriSyntactic} $\cal S$ was characterized as the effectivization of the regular syntactic category of the regular theory $\mathbb T$).

The faithfulness requirement for the realisation functors is a subtle matter. First of all, the faithfulness of the restriction to $\cal S$ of a realisation functor $f^{\ast}$ does \emph{not} imply the faithfulness of $f^{\ast}$ on the whole ${\cal E}_{\mathbb T}$ (see for instance section \ref{sec:bottomup}); on the other hand, the converse is obviously true, and one could look for toposes whose points are all surjections, such as atomic two-valued toposes (see section \ref{sec:topdown}). 

From a logical point of view, the faithfulness requirement for realisation functors means precisely that all the different (co)homology theories have the same \ac logical content': more precisely, if $\cal S$ is (a completion of the) syntactic category of $\mathbb T$ within a given fragment of logic $L$ to which $\mathbb T$ belongs, it means that all the (co)homology theories satisfy exactly the same sequents written in the fragment $L$ (see for instance section 2.4 of \cite{NoriSyntactic}). This brings us into the related subject of \emph{completeness} for first-order theories. Recall that a theory $\mathbb T$ within a given fragment $L$ of first-order logic is $L$-complete if every assertion (i.e. closed formula) in $L$ over its signature is either provably true or provably false in $\mathbb T$. It is worth to note that completeness varies depending on the fragment of logic that one considers; that is, the fact that a given theory, regarded as belonging to a given fragment of logic $L$, is $L$-complete does not imply that it should be $L'$-complete for any larger fragment $L'$ (even though the converse always holds). 

As shown in \cite{OC2} and \cite{OC13}, completeness of a geometric theory (within geometric logic) amounts precisely to the \emph{two-valuedness} of its classifying topos (recall that a Grothendieck topos is two-valued if the only subobjects of its terminal object are the zero subobject and the identity subobject, and they are distinct from each other), while first-order completeness of a finitary first-order theory amounts precisely to geometric completeness of its Morleyization. On the other hand, as shown in \cite{OC5}, the geometric completeness of an atomic theory implies its completeness also with respect to finitary first-order (not necessarily geometric) assertions over its signature. 

The importance of the topos-theoretic characterization of completeness as two-valuedness lies in the fact that it paves the way, in light of the technique \ac toposes as bridges' of \cite{OC10}, for investigating the logical matter of completeness of theories using techniques of other mathematical domains providing alternative representations of the given classifying topos. For instance, it was shown in \cite{OC2} that the theory of homogeneous $\mathbb T$-models, for a theory $\mathbb T$ of presheaf type whose category $\textrm{f.p.} {\mathbb T}\textrm{-mod}(\Set)$ of finitely presentable $\mathbb T$-models satisfies the amalgamation property, is complete if and only if the category $\textrm{f.p.} {\mathbb T}\textrm{-mod}(\Set)$ satisfies the joint embedding property. Needless to say, the latter property is much easier to verify in practice than the completeness condition (see \cite{OC2} for a few examples). 

We can now describe our two main approaches to the problem of constructing \ac motivic toposes'.

\subsection{The \ac bottom-up' approach}\label{sec:bottomup}

The \ac bottom-up' approach consists in building \ac motivic toposes' starting from a specific (co)homology theory $T$, and in trying to prove that (co)homology theories belonging to a given class for which one wants to construct motives yield equivalent toposes (or equivalent categories of motives).

This is the approach that we followed in \cite{NoriSyntactic}, where we realized Nori's category of a representation $T:D\to k\textrm{-vect}_{\textrm{fin}}$ as the full subcategory of classifying topos of the theory $\textrm{Th}(T)$ on the supercoherent objects. Unlike Nori's construction, our syntactic construction of a category ${\cal C}_{T}$ satisfying the universal factorization property makes sense also for representations with values in infinite-dimensional vector spaces over a given field $k$. So, for any known (co)homology theory, regarded as a representation $T:D\to {\mathbb Q}\textrm{-vect}$, one can consider the classifying topos ${\cal E}_{T}$ of the theory $\textrm{Th}(T)$. This will be a \ac motivic topos' if for any other known homology theory $T'$, there is an equivalence ${\cal E}_{T}\simeq {\cal E}_{T'}$ commuting with the canonical representations $D\to {\cal E}_{T}$ and $D\to {\cal E}_{T'}$, that is if and only if $\textrm{Th}(T)=\textrm{Th}(T')$, in other words if and only if $T$ and $T'$ satisfy the same regular properties over the language $L_{D}$. We gave a more concrete reformulation of these conditions in Corollary 2.10 \cite{NoriSyntactic}. It seems likely that all the usual homology theories satisfy these properties, which - it is important to remark - do \emph{not} detect the possible differences concerning the dimensions of homology groups pertaining to distinct theories. 

Notice that these toposes ${\cal E}_{T}$ are all connected and locally connected (as they are the classifying toposes of regular theories), but not necessarily atomic nor two-valued.

Provided that there exists a kind of \ac product' in $D$ compatible with $T$ and a kind of \ac involution' in $D$ compatible with $T$, it should be possible to show that the category ${\cal C}_{T}$ is closed in ${\cal E}_{T}$ under tensor products over $k$ and duals. We plan to investigate these points in due course. 

Given the fact that the dimension of a (co)homology group can be read straight off from the rigid tensorial structure (as the scalar arising by considering the composition of the two canonical maps $1 \to V \otimes V^{\ast} \to 1$, where $1$ is the unit of the tensor structure), it is natural to expect that the differences in the dimensions of the groups (subsisting for example between $\ell$-adic cohomology and higher $K$-theory) will reflect into the different behaviour, in terms of preservation or non-preservation of the rigid tensorial structure, by the relevant realisation functors.  

Summarizing, the \ac bottom-up approach' consists in constructing toposes which are \ac motivic' conditionally to the fact that the different (co)homology theories satisfy the same properties (over a fixed language and within a given logic - in the case of \cite{NoriSyntactic} these were the language $L_{D}$ and regular logic). The resulting realisation functors defined on the relevant subcategories of motives will all be faithful and exact but, depending on the (co)homology theory in question, they might not preserve the rigid tensorial structure. For theories whose associated realisation functors preserve such a structure, the equality of the dimensions of the respective groups follows as a straightforward consequence of this preservation property. 

We shall now describe an alternative, more axiomatic, approach to the problem of constructing \ac motivic toposes'.

\subsection{The \ac top-down' approach}\label{sec:topdown}

Given a class of cohomology theories $\cal K$ for which one wants to construct \ac motives', we can attempt to axiomatize the properties which are common to all the theories in $\cal K$ by means of a geometric theory ${\mathbb T}_{\cal K}$ whose classifying topos has enough points. If all the theories in $\cal K$ carry in themselves the same amount of \ac information' then the realisation functors defined on the classifying topos of ${\mathbb T}_{\cal K}$ will be (exact and) faithful (as they will correspond to conservative models of ${\mathbb T}_{\cal K}$). This topos will satisfy the requirements of a \ac motivic topos'; indeed, it will be an incarnation of the properties (expressible within geometric logic in the language of ${\mathbb T}_{\cal K}$) that the theories in $\cal K$ share with each other.

The signature $\Sigma_{\cal K}$ of ${\mathbb T}_{\cal K}$ will contain one sort for each of the (co)homology groups that one wants to study and function symbols corresponding to the morphisms between these groups that one wishes to consider, plus possibly other symbols formalizing the structures that one wants to consider on them (e.g., the field structure on the coefficient field $H_{0}(\textrm{Spec}(k), \emptyset)$ of the theory and the vector space structure over this field on all the other homology groups). Since (co)homology groups are indexed by tuples consisting of schemes over a given base field and integers (for instance triplets $(X, Y, i)$ index relative homology groups $H_{i}(X, Y)$), one should take a sort for each indexing object of this kind.

This approach allows to compare cohomology theories with different coefficients since the language of ${\mathbb T}_{\cal K}$ will contain one sort for the field of coefficients of the theory, which therefore varies with it.

Notice that if both $\ell$-adic \'etale cohomology and $p$-adic \'etale cohomology (for schemes in characteristic $p$) are $\cal K$ the theory ${\mathbb T}_{\cal K}$ will not be (geometrically) complete (if we axiomatize cohomology groups separately for each degree). Indeed, the bounds on the dimensions of cohomology groups are clearly expressible within geometric logic over the language of ${\mathbb T}_{\cal K}$ (as the requirement that any set of vectors with a size greater than the given bound be linearly dependent). On the other hand, it seems likely that there exists a complete theory $\mathbb M$ whose models are the $\ell$-adic cohomology theories (for all $\ell \neq p$) and the crystalline or rigid cohomology theory. This would imply the independence from $\ell$ not just of the dimensions of the cohomology groups, but of the dimensions of kernels and images of definable morphisms. For instance, for any $\lambda \in {\mathbb Q}$, the multiplicity $\textrm{dim}(T(f)-\lambda.Id)$ of $\lambda$ as an eigenvalue of the image $T(f)$ of a definable morphism $f$ by a (co)homology theory $T$ would be the same for all $T$ which are models of $\mathbb M$. More generally, for any rational polynomial $P\in {\mathbb Q}[X]$, the dimension $\textrm{dim}(\textrm{Ker}(P(T(f))))$ is independent from $T$.

Proving that a certain theory is complete is in general a hard matter; on the other hand, completeness of a geometric theory $\mathbb T$ can be reformulated as two-valuedness of its classifying topos and, if the topos is atomic, i.e. of the form $\Sh({\cal C}^{\textrm{op}}, J_{at})$ for some category $\cal C$ satisfying the amalgamation property (where $J_{at}$ is the atomic topology on it), this condition implies that all the models of $\mathbb T$ are conservative and is equivalent to the joint embedding property on the category $\cal C$, a condition that is much more amenable to a direct verification. 

We propose below a general framework implementing this approach for building \ac motivic toposes' based on the concept of atomic two-valued topos, and in particular on the topos-theoretic interpretation of Fra\"iss\'e's construction established in \cite{OC2}.

\section{The setting of atomic two-valued toposes}

In this section we argue that the setting of atomic two-valued toposes is a particularly suitable one for building \ac motivic toposes', also in relation to the syntactic interpretation of Nori's construction carried out in \cite{NoriSyntactic}. 

Recall that a Grothendieck topos is said to be \emph{atomic} if every object is a coproduct of atoms (i.e., of objects which do not have any proper subobjects).

Atomic two-valued toposes satisfy the following important property: any point of them is an open surjection (i.e. its inverse image is faithful and preserves exponentials and the subobject classifier). This is particularly relevant for our purposes since a property that is naturally expected of the realisations functors of a category of motives is that they preserve the rigid tensor structure present on it. Now, since internal tensor products of internal vector spaces over an internal field in a Grothendieck topos can be built by using finite limits and arbitrary colimits, and internal duals of internal vector spaces over an internal field can be built by only using finite limits and exponentials, any realisation functor coming from an open geometric morphism will preserve them.  

Moreover, pointed atomic two-valued toposes define a non-abelian lifting of the context of abelian categories with a faithful exact functor to the category of finite-dimensional vector spaces over a field. More specifically, as every $k$-linear abelian category $\cal A$ with a faithful functor $U:{\cal A}\to k\textrm{-vect}_{\textrm{fin}}$ is equivalent to the category $\textrm{Comod}_\textrm{fin}(\textrm{End}^{\vee}(U))$ of finite-dimensional comodules over the coalgebra of endomorphisms of $U$, so every atomic topos with a point $p$ is equivalent to the category $\Cont(\textrm{Aut}_{l}(p))$ of continuous actions of the localic automorphism group $\textrm{Aut}_{l}(p)$ of $p$ (cf.\cite{DubucG}). It is actually possible to associate to an atomic two-valued topos $\cal E$ with a point $p$ a $k$-linear abelian category ${\cal A}$ with a faithful exact functor $p_{\cal A}:{\cal A}\to k\textrm{-vect}_{\textrm{fin}}$ by taking $\cal A$ equal to the full subcategory $k\textrm{-vect}_{fin}({\cal E})$ of internal $k$-vector spaces $V$ in $\cal E$ such that $p^{\ast}(V)$ is a finite-dimensional vector space and $p_{{\cal A}}$ equal to the restriction of $p^{\ast}$ to $\cal A$. It would be interesting to characterize those pointed atomic two-valued toposes $({\cal E}, p)$ which can be recovered from the category $k\textrm{-vect}_{fin}({\cal E})$ or an appropriate subcategory of it, for instance as a completion of it or, by taking into account the rigid tensor structure present on $k\textrm{-vect}_{fin}({\cal E})$, as the classifying topos for (duals-preserving and tensor-preserving) representations of $k\textrm{-vect}_{fin}({\cal E})$ (into rigid $k$-linear tensor categories). 

For $k$-linear abelian categories with a faithful functor $U:{\cal A}\to k\textrm{-vect}$ to possibly infinite-dimensional vector spaces, this no longer works; the Tannaka-type construction of $\cal A$ as a category of comodules is replaced in \cite{NoriSyntactic} by a syntactic construction which generalizes it. The topos-theoretic analogue is provided by connected and locally connected toposes; indeed, any abelian category is equivalent to a full subcategory of the category of abelian objects in such a topos, namely the topos of regular sheaves on it (see also \cite{NoriSyntactic}).  
 
Lastly, atomic two-valued toposes constitute the natural setting for formulating the general Galois theory of topological or localic groups (cf. \cite{OCG}, \cite{DubucG} and section \ref{Fraisse} below). For any point $p$ of an atomic two-valued topos $\cal E$, $\cal E$ is equivalent to the topos of continuous actions of the \emph{localic automorphism group} $\textrm{Aut}_{l}(p)$ of $p$ (cf. \cite{DubucG}); if $p$ is universal and ultrahomogeneous (in the sense of \cite{OCG}) then $\cal E$ is equivalent to the topos of continuous actions of the \emph{topological automorphism group} $\textrm{Aut}_{t}(p)$ of $p$ (cf. \cite{OCG}). 

If $({\cal E}, p)$ is a pointed \ac motivic topos', we can define the \emph{motivic Galois group} of $p$ as the group of natural transformations of $p$ which respect the rigidity and the tensor structure on $k\textrm{-vect}_{fin}({\cal E})$. This is a smaller group with respect to $\textrm{Aut}_{l}(p)$ or $\textrm{Aut}_{t}(p)$, but it suffices, by Tannaka duality, to recover the Tannakian category $k\textrm{-vect}_{fin}({\cal E})$.

In order to investigate atomic toposes from the point of view of the theories that they classify, it is important to review the concept of theory of presheaf type. 

\subsection{Preliminaries on theories of presheaf type}

A geometric theory $\mathbb T$ is said to be \emph{of presheaf type} if it is classified by a presheaf topos. Recall that the classifying topos of a geometric theory $\mathbb T$ can always be canonically constructed as the category $\Sh({\cal C}_{\mathbb T}, J_{\mathbb T})$ of sheaves on the syntactic site $({\cal C}_{\mathbb T}, J_{\mathbb T})$ of $\mathbb T$ (cf. section \ref{sec:classifyingtoposes}). 

Every finitary algebraic theory (i.e., any theory whose axioms are of the form $(\top \vdash_{\vec{x}} t(\vec{x})=s(\vec{x}))$, where $t$ and $s$ are terms over the signature of $\mathbb T$ in the context $\vec{x}$) is of presheaf type, but the class of theories of presheaf type contains many other interesting mathematical theories (for instance, the coherent theory of linear orders or the infinitary theory of algebraic extensions of a given field). 

The following notions will be central for our purposes.

\begin{definition}
Let $\mathbb T$ be a geometric theory over a signature $\Sigma$ and $M$ a set-based $\mathbb T$-model. Then
\begin{enumerate}[(a)]
\item The model $M$ is said to be \emph{finitely presentable} if the representable functor $\textrm{Hom}_{{\mathbb T}\textrm{-mod}(\Set)}(M,-):{\mathbb T}\textrm{-mod}(\Set) \to \Set$ preserves filtered colimits;

\item The model $M$ is said to be \emph{finitely presented} if there is a geometric formula $\{\vec{x}. \phi\}$ over $\Sigma$ and a string of elements $(\xi_{1}, \ldots, \xi_{n})\in MA_{1}\times \cdots \times MA_{n}$ (where $A_{1}, \ldots, A_{n}$ are the sorts of the variables in $\vec{x}$), called the generators of $M$, such that for any $\mathbb T$-model $N$ in $\Set$ and string of elements $(b_{1}, \ldots, b_{n})\in MA_{1}\times \cdots \times MA_{n}$ such that $(b_{1}, \ldots, b_{n})\in [[\vec{x}.\phi]]_{N}$, there exists a unique arrow $f:M\to N$ in ${\mathbb T}\textrm{-mod}(\Set)$ such that $(fA_{1}\times \ldots \times fA_{n})((\xi_{1}, \ldots, \xi_{n}))=(b_{1}, \ldots, b_{n})$. 
\end{enumerate}

\end{definition}

The full subcategory of the category ${\mathbb T}\textrm{-mod}(\Set)$ of $\mathbb T$-models and model homomorphisms between them on the finitely presentable $\mathbb T$-models will be denoted by $\textrm{f.p.} {\mathbb T}\textrm{-mod}(\Set)$.

The classifying topos of a theory of presheaf type can always be canonically represented as the functor category $[\textrm{f.p.} {\mathbb T}\textrm{-mod}(\Set), \Set]$. 

For a theory of presheaf type $\mathbb T$, the two above-mentioned notions of finite presentability coincide. More specifically, we have the following result.

\begin{definition}
Let $\mathbb T$ be a geometric theory over a signature $\Sigma$. A geometric formula-in-context $\{\vec{x}. \phi\}$ is said to be \emph{$\mathbb T$-irreducible} if for any family $\{\theta_{i} \mid i\in I\}$ of $\mathbb T$-provably functional geometric formulae $\{\vec{x_{i}}, \vec{x}.\theta_{i}\}$ from $\{\vec{x_{i}}. \phi_{i}\}$ to $\{\vec{x}. \phi\}$ such that $(\phi \vdash_{\vec{x}} \mathbin{\mathop{\textrm{\huge $\vee$}}\limits_{i\in I}}(\exists \vec{x_{i}})\theta_{i})$ is provable in $\mathbb T$, there exist $i\in I$ and a $\mathbb T$-provably functional geometric formula $\{\vec{x}, \vec{x_{i}}. \theta'\}$ from $\{\vec{x}. \phi\}$ to $\{\vec{x_{i}}. \phi_{i}\}$ such that $(\phi \vdash_{\vec{x}} (\exists \vec{x_{i}})(\theta' \wedge \theta_{i}))$ is provable in $\mathbb T$. 
\end{definition}

\begin{theorem}\cite[Theorem 4.3]{OC7}\label{Onetoposmany_presheafcomplete}
Let $\mathbb T$ be a theory of presheaf type over a signature $\Sigma$. Then
\begin{enumerate}[(i)]
\item Any finitely presentable $\mathbb T$-model in $\Set$ is presented by a $\mathbb T$-irreducible geometric formula $\phi(\vec{x})$ over $\Sigma$;

\item Conversely, any $\mathbb T$-irreducible geometric formula $\phi(\vec{x})$ over $\Sigma$ presents a $\mathbb T$-model.
\end{enumerate}
In fact, the category $\textrm{f.p.} {\mathbb T}\textrm{-mod}(\Set)^{\textrm{op}}$ is equivalent to the full subcategory ${\cal C}_{\mathbb T}^{\textrm{irr}}$ of ${\cal C}_{\mathbb T}$ on the $\mathbb T$-irreducible formulae. 
\end{theorem}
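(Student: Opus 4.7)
The plan is to exploit the canonical equivalence $\Sh({\cal C}_{\mathbb T}, J_{\mathbb T})\simeq [\textrm{f.p.}\, {\mathbb T}\textrm{-mod}(\Set), \Set]$ available because $\mathbb T$ is of presheaf type, and to match $\mathbb T$-irreducible formulae on the left with representable functors on the right. By the universal property of the classifying topos, this equivalence is induced by a $J_{\mathbb T}$-continuous flat functor $F\colon{\cal C}_{\mathbb T}\to [\textrm{f.p.}\, {\mathbb T}\textrm{-mod}(\Set), \Set]$ sending $\{\vec{x}.\phi\}$ to its tautological interpretation $M\mapsto [[\vec{x}.\phi]]_{M}$ in the universal model; composed with the equivalence, this is isomorphic to the Yoneda embedding $y\colon{\cal C}_{\mathbb T}\to \Sh({\cal C}_{\mathbb T}, J_{\mathbb T})$.

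The central site-theoretic lemma I would establish is that $\{\vec{x}.\phi\}$ is $\mathbb T$-irreducible if and only if $y(\{\vec{x}.\phi\})$ is isomorphic to a representable functor in the classifying topos, equivalently an indecomposable projective object there. Unpacked, irreducibility says precisely that every $J_{\mathbb T}$-cover of $\{\vec{x}.\phi\}$ contains a morphism $[\theta_{i}]$ admitting a syntactic section $[\theta']$. Given this direction, I would first check that $y(\{\vec{x}.\phi\})$ is already a $J_{\mathbb T}$-sheaf by defining the amalgamation of any matching family as precomposition with a chosen section and verifying compatibility, and then deduce indecomposability from the fact that representables are indecomposable projective in any presheaf topos and that $F$ factors through such a topos via the equivalence. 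Conversely, if $y(\{\vec{x}.\phi\})$ is representable, applying the representing isomorphism to a given covering family $\{[\theta_{i}]\}$ the element corresponding to the identity must lift through some $[\theta_{i}]$, producing the required $\theta'$ and the provable identity $(\phi \vdash_{\vec{x}} (\exists \vec{x_{i}})(\theta' \wedge \theta_{i}))$.

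With this correspondence in hand, parts (i) and (ii) follow quickly. For (i), a finitely presentable model $M$ corresponds via Yoneda to $\textrm{Hom}(M,-)$, which by the lemma equals $F(\{\vec{x}.\phi\})$ for some $\mathbb T$-irreducible $\{\vec{x}.\phi\}$, and evaluating the natural isomorphism $[[\vec{x}.\phi]]\cong \textrm{Hom}(M,-)$ at $M$ produces generators $\vec{\xi}\in [[\vec{x}.\phi]]_{M}$ exhibiting $M$ as presented by $\phi$. For (ii), a $\mathbb T$-irreducible $\{\vec{x}.\phi\}$ maps to some representable $\textrm{Hom}(M_{\phi},-)$, and $M_{\phi}$ is by construction presented by $\phi$. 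Full faithfulness of Yoneda on representables then yields the final equivalence $\textrm{f.p.}\, {\mathbb T}\textrm{-mod}(\Set)^{\textrm{op}}\simeq {\cal C}_{\mathbb T}^{\textrm{irr}}$. The step I expect to be the main obstacle is the converse direction of the site-theoretic lemma: extracting a concrete syntactic section $\theta'$ from the abstract representability of $y(\{\vec{x}.\phi\})$ demands a careful interplay between the syntactic description of arrows in ${\cal C}_{\mathbb T}$ and the colimit description of representables as sheaves, and this is where the presheaf-type assumption on $\mathbb T$ does the essential work.
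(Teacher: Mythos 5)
Note first that the paper does not prove this result; it is quoted as Theorem 4.3 of \cite{OC7}, so there is no in-paper argument to compare your proposal against. Your overall strategy --- matching $\mathbb T$-irreducible formulae with indecomposable projective (equivalently, representable) objects across the equivalence $\Sh({\cal C}_{\mathbb T}, J_{\mathbb T})\simeq[\textrm{f.p.}\,{\mathbb T}\textrm{-mod}(\Set),\Set]$, and then deriving (i), (ii) and the final equivalence from this --- is the right one, and your converse direction (representable $\Rightarrow$ irreducible) is essentially correct once you say explicitly that full faithfulness of $y$, i.e. subcanonicity of $J_{\mathbb T}$, is what turns the abstract lift through one summand into an actual arrow $[\theta']$ of ${\cal C}_{\mathbb T}$.

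The gap is in the forward direction of your site-theoretic lemma. You propose to ``first check that $y(\{\vec{x}.\phi\})$ is already a $J_{\mathbb T}$-sheaf \ldots and then deduce indecomposability from the fact that representables are indecomposable projective in any presheaf topos and that $F$ factors through such a topos via the equivalence.'' The first step is vacuous: $J_{\mathbb T}$ is always subcanonical, so $Y(\{\vec{x}.\phi\})$ is a sheaf for \emph{every} formula-in-context, irreducible or not, and irreducibility plays no role there. The second step conflates two different presheaf presentations: $Y(\{\vec{x}.\phi\})$ is indeed an indecomposable projective in $[{\cal C}_{\mathbb T}^{\textrm{op}},\Set]$ --- but, again, for every formula --- and this does \emph{not} transfer to indecomposable projectivity in $\Sh({\cal C}_{\mathbb T}, J_{\mathbb T})\simeq[\textrm{f.p.}\,{\mathbb T}\textrm{-mod}(\Set),\Set]$, since the inclusion of sheaves into presheaves on ${\cal C}_{\mathbb T}$ does not preserve epimorphisms or coproducts, hence neither projectivity nor indecomposability. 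What one needs instead is to show directly that irreducibility forces $y(\{\vec{x}.\phi\})$ to be indecomposable projective in the \emph{sheaf} topos: given an epi $G\twoheadrightarrow y(\{\vec{x}.\phi\})$ there, cover $G$ by objects $y(c_i)$; the composites $y(c_i)\to y(\{\vec{x}.\phi\})$ form a jointly epic family, hence (by subcanonicity) generate a $J_{\mathbb T}$-covering sieve; by irreducibility one of them splits in ${\cal C}_{\mathbb T}$, and composing that section with $y(c_i)\to G$ yields a section of the epi. Indecomposability is argued likewise, by covering both summands of a putative coproduct decomposition of $y(\{\vec{x}.\phi\})$ by representables. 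Incidentally, you flag the converse direction as the likely main obstacle, but it is the easier of the two; this forward direction is where the actual work lies.
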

  
The following theorem provides a syntactic criterion for a theory $\mathbb T$ to be of presheaf type, which amounts to the requirement that every geometric formula over the signature of $\mathbb T$ should be covered in the syntactic site of $\mathbb T$ by $\mathbb T$-irreducible formulas. 

\begin{theorem}\cite[Corollary 3.15]{OC7}\label{CriterionPresheafType}
Let $\mathbb T$ be a geometric theory over a signature $\Sigma$. Then $\mathbb T$ is of presheaf type if and only if there exists a collection $\cal F$ of $\mathbb T$-irreducible geometric formulae-in-context over $\Sigma$ such that for any geometric formula $\{\vec{y}. \psi\}$ over $\Sigma$ there exist objects $\{\vec{x_{i}}. \phi_{i}\}$ in $\cal F$ as $i$ varies in $I$ and $\mathbb T$-provably functional geometric formulae $\{\vec{x_{i}}, \vec{y}.\theta_{i}\}$ from $\{\vec{x_{i}}. \phi_{i}\}$ to $\{\vec{y}. \psi\}$ such that $(\psi \vdash_{\vec{y}} \mathbin{\mathop{\textrm{\huge $\vee$}}\limits_{i\in I}}(\exists \vec{x_{i}})\theta_{i})$ is provable in $\mathbb T$.
\end{theorem}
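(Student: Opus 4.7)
My plan is to prove both directions by taking $\mathcal{F}$ to be (a skeleton of) the full subcategory $\mathcal{C}_{\mathbb T}^{\textrm{irr}}$ of $\mathcal{C}_{\mathbb T}$ on the $\mathbb T$-irreducible formulas and to argue via the Comparison Lemma that the stated covering condition is precisely the $J_{\mathbb T}$-density of this subcategory in $(\mathcal{C}_{\mathbb T}, J_{\mathbb T})$.

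For the ``if'' direction, assume every $\{\vec{y}.\psi\}$ is $J_{\mathbb T}$-covered by $\mathbb T$-provably functional formulas from objects of $\mathcal{F}$. By definition this is the $J_{\mathbb T}$-density of $\mathcal{F}$ in $\mathcal{C}_{\mathbb T}$, so the Comparison Lemma yields an equivalence $\Sh(\mathcal{C}_{\mathbb T}, J_{\mathbb T})\simeq \Sh(\mathcal{F}, J_{\mathbb T}|_{\mathcal{F}})$. The key step is to identify the induced topology. For a $\mathbb T$-irreducible $\{\vec{x}.\phi\}$, the very definition of irreducibility forces any $J_{\mathbb T}$-covering family $\{\theta_i:\{\vec{x_i}.\phi_i\}\to \{\vec{x}.\phi\}\}$ to contain some $\theta_i$ admitting a section $\theta'$ in $\mathcal{C}_{\mathbb T}$, i.e.\ a split epimorphism. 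Since any sieve containing a split epimorphism contains the identity on its codomain (the identity factoring through $\theta_i$ via its section), every $J_{\mathbb T}|_{\mathcal{F}}$-covering sieve is the maximal sieve, so $J_{\mathbb T}|_{\mathcal{F}}$ is the trivial topology. Hence $\Sh(\mathcal{C}_{\mathbb T}, J_{\mathbb T})\simeq [\mathcal{F}\op, \Set]$, exhibiting the classifying topos as a presheaf topos and so $\mathbb T$ as of presheaf type.

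For the ``only if'' direction, assume $\mathbb T$ is of presheaf type, so its classifying topos admits the two canonical presentations $\Sh(\mathcal{C}_{\mathbb T}, J_{\mathbb T})$ and $[\textrm{f.p.}\mathbb T\textrm{-mod}(\Set), \Set]$. By Theorem \ref{Onetoposmany_presheafcomplete}, $\textrm{f.p.}\mathbb T\textrm{-mod}(\Set)\op$ is equivalent to $\mathcal{C}_{\mathbb T}^{\textrm{irr}}$ via $M\mapsto \{\vec{x}.\phi_M\}$, where $\phi_M$ presents $M$. Every object of $[\textrm{f.p.}\mathbb T\textrm{-mod}(\Set), \Set]$ is a small colimit of representables; in particular, the image in the classifying topos of any $\{\vec{y}.\psi\}$ is of the form $\mathrm{colim}_{i\in I}\, \textrm{Hom}(M_{\phi_i}, -)$. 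Transporting this across the equivalence into $\Sh(\mathcal{C}_{\mathbb T}, J_{\mathbb T})$, and using that the equivalence matches representables on f.p.\ models with the sheafifications of their presenting formulas, produces a $J_{\mathbb T}$-covering of $\{\vec{y}.\psi\}$ by $\mathbb T$-provably functional morphisms out of the $\mathbb T$-irreducible formulas $\{\vec{x_i}.\phi_i\}$. Taking $\mathcal{F} = \mathcal{C}_{\mathbb T}^{\textrm{irr}}$ then satisfies the stated condition.

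The main obstacle I anticipate is precisely in the ``only if'' direction: carefully verifying that the equivalence $[\textrm{f.p.}\mathbb T\textrm{-mod}(\Set), \Set]\simeq \Sh(\mathcal{C}_{\mathbb T}, J_{\mathbb T})$ intertwines the Yoneda embedding of f.p.\ models with the composite of the inclusion $\mathcal{C}_{\mathbb T}^{\textrm{irr}}\hookrightarrow \mathcal{C}_{\mathbb T}$ followed by the sheafified Yoneda of the syntactic site. Once this compatibility is nailed down, the rest is essentially the standard fact that any object of a presheaf topos is a colimit of representables, combined with the standard translation of colimits of representables into $J_{\mathbb T}$-covering families.
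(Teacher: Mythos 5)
The paper does not prove this theorem --- it is cited from OC7, Corollary 3.15 --- so there is no in-paper proof to compare against. Your argument is essentially correct, and the ``if'' direction via the Comparison Lemma is the standard route.

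One detail to fill in the ``if'' direction: you apply irreducibility to conclude that a $J_{\mathbb T}|_{\mathcal{F}}$-covering sieve $R$ on $\{\vec{x}.\phi\}$ contains $\textrm{id}$, but irreducibility directly gives a split epimorphism $f$ (with section $s$) in the sieve $\bar R$ generated by $R$ in $\mathcal{C}_{\mathbb T}$, not in $R$ itself. You should note that $\textrm{id}=f\circ s\in\bar R$ therefore factors as $g\circ h$ with $g\in R$, and since $h$ has both endpoints in $\mathcal{F}$ it lies in $\mathcal{F}$ by fullness, whence $\textrm{id}\in R$. This is a one-line fix, not a gap in the idea.

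The ``only if'' direction is also correct as written. The compatibility lemma you flag does hold: if $\{\vec{x}.\phi\}$ presents the finitely presentable $\mathbb T$-model $M$, the universal property of presentation gives $[[\vec{x}.\phi]]_{N}\cong \textrm{Hom}_{{\mathbb T}\textrm{-mod}(\Set)}(M,N)$ naturally in $N$, which is exactly the required intertwining of the Yoneda embedding of $\textrm{f.p.}\,{\mathbb T}\textrm{-mod}(\Set)$ with the sheafified syntactic objects. You should also invoke subcanonicity of $J_{\mathbb T}$ to turn the colimit legs back into $\mathbb T$-provably functional formulae. Be aware, however, that this direction leans on Theorem \ref{Onetoposmany_presheafcomplete} (Theorem 4.3 of OC7), which in OC7 appears \emph{after} Corollary 3.15; within the present paper both are cited black boxes, so your derivation is fine here, but as a reconstruction of the original argument it is potentially circular. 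A self-contained proof of this direction instead characterizes the supercompact objects of $\Sh(\mathcal{C}_{\mathbb T}, J_{\mathbb T})$ directly, identifies those arising from syntactic objects with $\mathbb T$-irreducible formulae, and uses that in a presheaf topos the representables (which are supercompact) generate.
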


As an illustration of this theorem, consider the injectivization of the algebraic theory of Boolean algebras (see section \ref{sec_homoginj} below for the definition of injectivization of a geometric theory). In this theory, which is well-known to be of presheaf type, the formula $\{x. \top\}$ does not present a model (that is, there is not a free model on one generator), but it is covered by the formulae $\{x. x=0\}\mono \{x. \top\}$, $\{x. x=1\}\mono \{x. \top\}$ and $\{x. x\neq 0 \wedge x\neq 1\}\mono \{x. \top\}$, which are irreducible in the theory since they present respectively the models $\{0,1\}$, $\{0, 1\}$ and the four element Boolean algebra $\{0, 1, b, \neg b\}$ (notice that the formulae $\{x. x=0\}$ and $\{x. x=1\}$ are isomorphic in the syntactic category of the theory to the formula $\{[]. \top\}$, which presents the initial algebra $\{0,1\}$).

\begin{lemma}\label{lem_fp}
Let $\mathbb T$ be a theory of presheaf type and $\{\vec{x}.\phi\}$ a formula-in-context which presents a $\mathbb T$-model $M_{\{\vec{x}.\phi\}}$. Then for every geometric formula $\psi(\vec{x})$ over the signature of $\mathbb T$, the sequent $(\phi \vdash_{\vec{x}} \psi)$ is provable in $\mathbb T$ if and only if, denoting by $\vec{\xi}$ the set of generators of $M_{\{\vec{x}.\phi\}}$, $M_{\{\vec{x}.\phi\}} \vDash \psi(\vec{\xi})$.
\end{lemma}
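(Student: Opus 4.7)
The forward direction is soundness: if $(\phi \vdash_{\vec{x}} \psi)$ is provable in $\mathbb{T}$, then it holds in every $\mathbb{T}$-model; applying this at $M_{\{\vec{x}.\phi\}}$ to the generating tuple $\vec{\xi}$, which by definition of generators satisfies $\phi$ in $M_{\{\vec{x}.\phi\}}$, immediately yields $M_{\{\vec{x}.\phi\}} \vDash \psi(\vec{\xi})$.

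For the converse direction, I would combine two standard facts. First, every geometric formula is preserved under $\Sigma$-structure homomorphisms between $\mathbb{T}$-models: a straightforward induction on the construction of a formula from atomic ones via $\wedge$, $\exists$ and arbitrary (possibly infinitary) disjunctions shows that for every geometric $\{\vec{x}.\theta\}$ and every homomorphism $f: M \to N$ in ${\mathbb T}\textrm{-mod}(\Set)$, one has $f\bigl([[\vec{x}.\theta]]_{M}\bigr) \subseteq [[\vec{x}.\theta]]_{N}$. Second, the universal property built into the definition of the finitely presented model $M_{\{\vec{x}.\phi\}}$ asserts that for any $\mathbb{T}$-model $N$ and any $\vec{b} \in [[\vec{x}.\phi]]_{N}$, there is a (unique) homomorphism $f: M_{\{\vec{x}.\phi\}} \to N$ with $f(\vec{\xi}) = \vec{b}$. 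Assuming $M_{\{\vec{x}.\phi\}} \vDash \psi(\vec{\xi})$ and applying preservation of $\psi$ along $f$, we deduce $\vec{b} = f(\vec{\xi}) \in [[\vec{x}.\psi]]_{N}$, so the sequent $(\phi \vdash_{\vec{x}} \psi)$ is valid in every set-based $\mathbb{T}$-model.

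It remains to pass from validity in all set-based models to provability in $\mathbb{T}$. This is where the hypothesis that $\mathbb{T}$ is of presheaf type enters decisively: its classifying topos is canonically equivalent to the presheaf topos $[\textrm{f.p.} {\mathbb T}\textrm{-mod}(\Set), \Set]$, in which the universal $\mathbb{T}$-model is the tautological evaluation functor $M \mapsto M$ (sort by sort). Consequently, a geometric sequent is provable in $\mathbb{T}$ iff it holds in the universal model iff it is valid in every finitely presentable set-based $\mathbb{T}$-model, which is a special case of what the previous paragraph has already established. The only step demanding any nontrivial input is this final completeness reduction, but it is a direct consequence of the presheaf-topos description of the classifying topos that is built into the notion of a theory of presheaf type, so I do not anticipate any genuine obstacle in the proof.
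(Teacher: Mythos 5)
Your proof is correct and follows essentially the same route as the paper's: soundness gives the forward implication, and for the converse you use the universal property of the finitely presented model to transport the truth of $\psi$ at the generators along a homomorphism to any model realizing $\phi$, then invoke the completeness-with-respect-to-set-based-models that a theory of presheaf type enjoys. The aside about the universal model in $[\textrm{f.p.}\,{\mathbb T}\textrm{-mod}(\Set), \Set]$ is not needed — the bare fact that geometric provability coincides with validity in all set-based models suffices, which is exactly how the paper phrases it.
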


\begin{proof}
Since $\mathbb T$ is of presheaf type, provability of geometric sequents in $\mathbb T$ is equivalent to validity in all $\Set$-based $\mathbb T$-models. For any set-based $\mathbb T$-model $M$, the sequent $(\phi \vdash_{\vec{x}} \psi)$ is valid in $M$ if and only if for any tuple $\vec{a}$ such that $M \vDash \phi(\vec{a})$, $M \vDash \psi(\vec{a})$. Now, by the universal property of the $\mathbb T$-model $M_{\{\vec{x}.\phi\}}$ presented by the formula $\{\vec{x}.\phi\}$, there exists a $\mathbb T$-model homomorphism $f:M_{\{\vec{x}.\phi\}}\to M$ such that $f(\vec{\xi})=\vec{a}$. Hence $M \vDash \psi(\vec{a})$ since $M_{\{\vec{x}.\phi\}} \vDash \psi(\vec{\xi})$ and $\psi$ is geometric. 
\end{proof}

Theorem \ref{CriterionPresheafType} and Lemma \ref{lem_fp} can be useful in connection with the problem of constructing a theory classified by a given topos $[{\cal C}, \Set]$. Indeed, according to the general method for constructing geometric theories classified by a given presheaf topos developed in 7.2 of \cite{OCPT}, the first step in constructing a theory classified by the topos $[{\cal C}, \Set]$ is to realise $\cal C$ as a full subcategory of the category of finitely presentable models of a theory $\mathbb S$ that one already knows to be of presheaf type; this will ensure that there is a quotient ${\mathbb S}_{\cal C}$ of $\mathbb S$ classified by the topos $[{\cal C}, \Set]$. The general theorems of section 6.4.2 of \cite{OCPT} take care of obtaining explicit axiomatizations for such a quotient. Theorem \ref{CriterionPresheafType} shows that all the sequents expressing the fact that every geometric formula over the signature of $\mathbb S$ is covered by the family of formulae presenting a model in $\cal C$ should be provable in ${\mathbb S}_{\cal C}$, while Lemma \ref{lem_fp} shows that all the sequents of the form $(\phi \vdash_{\vec{x}} \psi)$, where $\{\vec{x}.\phi\}$ is a formula presenting a model $c$ in $\cal C$ with generators $\vec{\xi}$ and $\psi(\vec{x})$ is a formula such that $c\vDash \psi(\vec{\xi})$, should be provable in ${\mathbb S}_{\cal C}$. These sequents should therefore be certainly added to $\mathbb S$ to form ${\mathbb S}_{\cal C}$. We shall apply these remarks in section \ref{sec:basictheorypresheaftype}.

\subsection{The topos-theoretic Fra\"iss\'e theorem}\label{Fraisse}

The following notions, which will play a central role in our analysis, are categorical generalisations of the concepts involved in the classical Fra\"iss\'e's construction (cf. \cite{OC2}).

\begin{definition}
A category $\mathcal C$ is said to satisfy the \emph{amalgamation property} (AP) if for every objects $a,b,c\in {\mathcal C}$ and morphisms $f:a\rightarrow b$, $g:a\rightarrow c$ in $\mathcal C$ there exists an object $d\in \mathcal C$ and morphisms $f':b\rightarrow d$, $g':c\rightarrow d$ in $\mathcal C$ such that $f'\circ f=g'\circ g$:
\[  
\xymatrix {
a \ar[d]_{g} \ar[r]^{f} & b  \ar@{-->}[d]^{f'} \\
c \ar@{-->}[r]_{g'} & d } 
\] 
\end{definition} 

Note that if $\mathcal C$ satisfies AP then we may equip ${\mathcal C}^{\textrm{op}}$ with the atomic topology, that is the Grothendieck topology whose covering sieves are exactly the non-empty ones. This point will be a fundamental ingredient of our topos-theoretic interpretation of Fra\"iss\'e's theorem. 

\begin{definition}
A category $\mathcal C$ is said to satisfy the \emph{joint embedding property} (JEP) if for every pair of objects $a,b\in {\mathcal C}$ there exists an object $c\in \mathcal C$ and morphisms $f:a\rightarrow c$, $g:b\rightarrow c$ in $\mathcal C$:
\[  
\xymatrix {
 & a  \ar@{-->}[d]^{f} \\
b \ar@{-->}[r]_{g} & c } 
\] 
\end{definition} 
Notice that if $\mathcal C$ has a weakly initial object (i.e., an object which admits an arrow to any other object of $\mathcal C$) then AP on $\mathcal C$ implies JEP on $\mathcal C$; however, in general the two notions are quite distinct from each other. 

\begin{definition}\label{defhomogeneity}
Let ${\mathcal C}\hookrightarrow {\mathcal D}$ be the embedding of a subcategory $\cal C$ into a category $\cal D$.
\begin{enumerate}[(a)]
\item An object $u\in {\mathcal D}$ is said to be \emph{$\mathcal C$-homogeneous} if for every objects $a,b \in {\mathcal C}$ and arrows $j:a\rightarrow b$ in ${\mathcal C}$ and $\chi:a\rightarrow u$ in $\mathcal D$ there exists an arrow $\tilde{\chi}:b\rightarrow u$ in $\mathcal D$ such that $\tilde{\chi}\circ j=\chi$:
\[  
\xymatrix {
a \ar[d]_{j} \ar[r]^{\chi} & u \\
b \ar@{-->}[ur]_{\tilde{\chi}} &  } 
\] 
\item An object $u\in {\mathcal D}$ is said to be \emph{$\mathcal C$-ultrahomogeneous} if for every objects $a,b \in {\mathcal C}$ and arrows $j:a\rightarrow b$ in ${\mathcal C}$ and $\chi_{1}:a\rightarrow u$, $\chi_{2}:b\rightarrow u$ in $\mathcal D$ there exists an isomorphism $\check{j}:u\rightarrow u$ such that $\check{j}\circ \chi_{1}=\chi_{2}\circ j$:
\[  
\xymatrix {
a \ar[d]_{j} \ar[r]^{\chi_{1}} & u \ar@{-->}[d]^{\check{j}}\\
b \ar[r]_{\chi_{2}} & u } 
\] 
\item An object $u\in {\mathcal D}$ is said to be \emph{$\mathcal C$-universal} if it is $\mathcal C$-cofinal, that is for every $a\in {\mathcal C}$ there exists an arrow $\chi:a\rightarrow u$ in $\mathcal D$:
\[  
\xymatrix {
a \ar@{-->}[r]^{\chi} & u  } 
\]    
\end{enumerate} 
\end{definition} 

\begin{remarks}
\begin{enumerate}[(a)]
\item It is easy to see that if $u$ is $\mathcal C$-ultrahomogeneous and $\mathcal C$-universal then $u$ is $\mathcal C$-homogeneous.

\item If $\cal C$ has an initial object in $\cal D$ then every $\cal C$-homogeneous object is $\cal C$-universal.

\item In verifying that an object $u$ in $\mathcal D$ is $\mathcal C$-ultrahomogeneous one can clearly suppose, without loss of generality, that the arrow $j$ in the definition is an identity.
\end{enumerate} 

\end{remarks}

Let $\mathbb T$ be a theory of presheaf type over a signature $\Sigma$ such that its category $\textrm{f.p.} {\mathbb T}\textrm{-mod}(\Set)$ of finitely presentable models satisfies the amalgamation property. Then we can put on the opposite category ${\textrm{f.p.} {\mathbb T}\textrm{-mod}(\Set)}^{\textrm{op}}$ the atomic topology $J_{at}$, obtaining a subtopos $\Sh({\textrm{f.p.} {\mathbb T}\textrm{-mod}(\Set)}^{\textrm{op}}, J_{at})$ of the classifying topos $[{\textrm{f.p.} {\mathbb T}\textrm{-mod}(\Set)}, \Set]$ of $\mathbb T$, which corresponds by the duality theorem of \cite{OCL} to a unique quotient ${\mathbb T}'$ of ${\mathbb T}$. 

This quotient can be characterized as the theory over $\Sigma$ obtained from $\mathbb T$ by adding all the sequents of the form $(\psi \vdash_{\vec{y}} (\exists \vec{x}) \theta(\vec{x}, \vec{y}))$, where $\phi(\vec{x})$ and $\psi(\vec{x})$ are formulae which present a $\mathbb T$-model and $\theta(\vec{x}, \vec{y})$ is a $\mathbb T$-provably functional formula from $\{\vec{x}. \phi\}$ to $\{\vec{y}. \psi\}$. Semantically, the theory ${\mathbb T}'$ axiomatizes the homogeneous models of $\mathbb T$, that is the models $M$ of $\mathbb T$ in $\Set$ such that for any arrow $y:c\to M$ in ${\mathbb T}\textrm{-mod}(\Set)$ and any arrow $f$ in $\textrm{f.p.} {\mathbb T}\textrm{-mod}(\Set)$ there exists an arrow $u$ in ${\mathbb T}\textrm{-mod}(\Set)$ such that $u \circ f=y$:
\[  
\xymatrix {
c \ar[d]_{f} \ar[r]^{y} & M \\
d \ar@{-->}[ur]_{u} &  } 
\] 
  
For this reason, we shall call ${\mathbb T}'$ the `theory of homogeneous $\mathbb T$-models'. Notice that a $\mathbb T$-model $M$ is homogeneous if and only if it is $\textrm{f.p.} {\mathbb T}\textrm{-mod}(\Set)$-homogeneous as an object of the category ${\mathbb T}\textrm{-mod}(\Set)$ in the sense of Definition \ref{defhomogeneity}. 

\begin{theorem}\cite{OC2}\label{mainFraisse}
Let $\mathbb T$ be a theory of presheaf type such that the category $\textrm{f.p.} {\mathbb T}\textrm{-mod}(\Set)$ is non-empty and satisfies AP and JEP. Then the topos 
\[
\Sh({\textrm{f.p.} {\mathbb T}\textrm{-mod}(\Set)}^{\textrm{op}}, J_{at})
\]
is atomic and two-valued; in other words, the theory ${\mathbb T}'$ of homogeneous $\mathbb T$-models is complete and atomic. In particular, any two models of ${\mathbb T}'$ satisfy exactly the same first-order formulae over the signature of $\mathbb T$.
\end{theorem}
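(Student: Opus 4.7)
My plan is to reduce the theorem to two topos-theoretic statements about $\mathcal F := \Sh(\mathcal C^{\textrm{op}}, J_{at})$, where I write $\mathcal C := \textrm{f.p.}\,\mathbb T\textrm{-mod}(\Set)$: namely (i) $\mathcal F$ is atomic and (ii) $\mathcal F$ is two-valued. Given the correspondence recalled just before the theorem between $\mathcal F$ and the quotient $\mathbb T'$, together with the fact from \cite{OC2}/\cite{OC13} that two-valuedness of the classifying topos is equivalent to geometric completeness, these two facts together establish that $\mathbb T'$ is complete and atomic.

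For (i), I would invoke the general fact that the sheaf topos on any atomic site is atomic. AP on $\mathcal C$ is precisely the right Ore condition on $\mathcal C^{\textrm{op}}$, which is what makes $J_{at}$ a well-defined Grothendieck topology there. The sheafifications $a_{J_{at}}(y(c))$ of the representables then form a separating family of atoms: any subobject of such a sheaf corresponds to a $J_{at}$-closed sieve on $c$, and the only such closed sieves are the empty one (giving $0$) and the maximal one (giving the sheaf itself).

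For (ii), which I expect to be the substantive step, I would analyse subobjects $F \hookrightarrow 1$ in $\mathcal F$ directly. Viewing $F$ as a covariant functor $\mathcal C \to \Set$ taking values in $\{\emptyset, \{\ast\}\}$, set $S := \{c \in \mathcal C : F(c) = \{\ast\}\}$. Functoriality forces $S$ to be closed under codomains: if $c \in S$ and $c \to c'$ in $\mathcal C$, then $c' \in S$. The $J_{at}$-sheaf condition, using that every nonempty sieve covers, then imposes the dual closure: if $c \to c'$ in $\mathcal C$ and $c' \in S$, then $c \in S$ (one tests against the nonempty singleton sieve generated by that single arrow). Assuming $\mathcal C$ is nonempty, I then apply JEP: for any $a \in S$ and any $b \in \mathcal C$, a cospan $a \to c \leftarrow b$ first gives $c \in S$ by codomain-closure, then $b \in S$ by domain-closure. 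So $S$ is either empty or all of $\mathcal C$, meaning $F \cong 0$ or $F \cong 1$. Nonemptiness of $\mathcal C$ is used again to ensure $0 \not\cong 1$, i.e.\ that $\mathcal F$ is not degenerate.

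The final sentence of the theorem, that any two models of $\mathbb T'$ satisfy exactly the same first-order formulas, then follows by invoking the result from \cite{OC5} that geometric completeness of an atomic theory automatically upgrades to first-order completeness, hence any two models are elementarily equivalent. The main obstacle I foresee is the bookkeeping in the two-valuedness argument: keeping the variance straight between $\mathcal C$ and $\mathcal C^{\textrm{op}}$, and correctly exploiting the unusual strength of the $J_{at}$-sheaf condition (namely that \emph{every} nonempty sieve is a cover, so the condition collapses onto a simple closure property on $S$) are the only real subtleties, but they are routine once set up carefully.
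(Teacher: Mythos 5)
Your argument is correct, and it is essentially the canonical proof of this result (the paper outsources the proof to \cite{OC2} rather than reproducing it). The one thing worth stating explicitly, which you leave slightly implicit, is why the singleton sieve test really suffices in the two-valuedness step: if $g\colon c'\to c$ in $\mathcal C^{\mathrm{op}}$ has $F(c')=\{\ast\}$, you need $F(\mathrm{dom}\,f)=\{\ast\}$ for \emph{every} $f=gh$ in the generated sieve, not just for $g$ itself; this holds because the presheaf restriction map $F(c')\to F(\mathrm{dom}\,h)$ forces $F(\mathrm{dom}\,h)$ to be nonempty. With that detail filled in, your bookkeeping on variance and the use of AP (to make $J_{at}$ a topology and to get atomicity), JEP (to collapse $S$ to $\emptyset$ or all of $\mathcal C$), and nonemptiness (to separate $0$ from $1$) is exactly right, and the final upgrade from geometric to full first-order completeness via atomicity is correctly attributed to \cite{OC5}.
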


\begin{remark}
The idea underlying this topos-theoretic generalization of Fra\"is-s\'e's theorem is that two homogeneous models are not necessarily comparable with each other directly, but since they can both be represented as filtered colimits of finitely presentable models, links between them can be established by working at the level of these `small' models, which can be amalgamated with each other thanks to the AP and JEP. 
\end{remark}

The following result shows that the toposes arising in Theorem \ref{mainFraisse} often admit Galois-type representations. 

\begin{theorem}\cite{OCG}
\label{galois}
Let $\mathbb T$ be a theory of presheaf type such that its category $\textrm{f.p.}{\mathbb T}\textrm{-mod}(\Set)$ of finitely presentable models satisfies AP and JEP, and let $M$ be a $\textrm{f.p.}{\mathbb T}\textrm{-mod}(\Set)$-universal and $\textrm{f.p.}{\mathbb T}\textrm{-mod}(\Set)$-ultrahomogeneous model of $\mathbb T$. Then we have an equivalence of toposes 
\[
\Sh({\textrm{f.p.}{\mathbb T}\textrm{-mod}(\Set)}^{\textrm{op}}, J_{at})\simeq \Cont(\textrm{Aut}(M)),
\]
where $\textrm{Aut}(M)$ is endowed with the topology of pointwise convergence (in which a basis of open neighbourhoods of the identity is given by the sets of the form $\{f:M\cong M \mid f(\vec{a})=\vec{a}\}$ for any $\vec{a}\in M$), which is induced by the functor
\[
F:{\textrm{f.p.}{\mathbb T}\textrm{-mod}(\Set)}^{\textrm{op}}\to \Cont(\textrm{Aut}(M)) 
\]
sending any model $c$ of $\textrm{f.p.}{\mathbb T}\textrm{-mod}(\Set)$ to the set $\textrm{Hom}_{{\mathbb T}\textrm{-mod}(\Set)}(c, M)$ (endowed with the obvious action by $\textrm{Aut}(M)$) and any arrow $f:c\to d$ in $\textrm{f.p.}{\mathbb T}\textrm{-mod}(\Set)$ to the $\textrm{Aut}(M)$-equivariant map
\[
-\circ f:\textrm{Hom}_{{\mathbb T}\textrm{-mod}(\Set)}(d, M)\to \textrm{Hom}_{{\mathbb T}\textrm{-mod}(\Set)}(c, M).
\]
\end{theorem}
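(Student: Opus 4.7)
The plan is to exhibit $F$ as a dense and fully faithful morphism of sites from $(\textrm{f.p.}{\mathbb T}\textrm{-mod}(\Set)^{\textrm{op}}, J_{at})$ to the canonical atomic site of transitive continuous $\textrm{Aut}(M)$-sets, and then invoke the Comparison Lemma to obtain the equivalence of toposes. Since $M$ is both $\textrm{f.p.}{\mathbb T}\textrm{-mod}(\Set)$-universal and $\textrm{f.p.}{\mathbb T}\textrm{-mod}(\Set)$-ultrahomogeneous, it is in particular $\textrm{f.p.}{\mathbb T}\textrm{-mod}(\Set)$-homogeneous, hence a model of the theory ${\mathbb T}'$ classified by $\Sh({\textrm{f.p.}{\mathbb T}\textrm{-mod}(\Set)}^{\textrm{op}}, J_{at})$; it therefore corresponds to a point of this topos whose inverse image sends a representable $c$ to the underlying set of $F(c)$.

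Next, I would check that $F$ actually takes values in $\Cont(\textrm{Aut}(M))$: post-composition by automorphisms of $M$ endows each $\textrm{Hom}_{{\mathbb T}\textrm{-mod}(\Set)}(c, M)$ with an $\textrm{Aut}(M)$-action, and pre-composition with arrows $c \to d$ is manifestly equivariant. Continuity follows from the finite presentability of $c$: any $\chi : c \to M$ is determined by the images of its finitely many generators, so its stabilizer contains the pointwise stabilizer of that finite tuple and is therefore open in the topology of pointwise convergence. It then remains to verify that $F$ maps $J_{at}$-covers to jointly surjective families; a non-empty sieve on $c$ in $\textrm{f.p.}{\mathbb T}\textrm{-mod}(\Set)^{\textrm{op}}$ is a non-empty family of arrows $c \to d_{i}$ in $\textrm{f.p.}{\mathbb T}\textrm{-mod}(\Set)$, and $\textrm{f.p.}{\mathbb T}\textrm{-mod}(\Set)$-homogeneity of $M$ guarantees that every $\chi : c \to M$ factors through some $c \to d_{i}$, which is exactly the joint surjectivity of the induced family of pre-composition maps.

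The fully faithful statement uses ultrahomogeneity in an essential way. For faithfulness, if two arrows $f, g : c \to d$ induce the same map $\textrm{Hom}(d, M) \to \textrm{Hom}(c, M)$, pick (by universality) an arrow $\chi : d \to M$; ultrahomogeneity, applied inside $d$ to the diagram of images of generators, forces $f = g$. For fullness, given an equivariant map $\alpha : F(d) \to F(c)$, choose a $\chi_{0} : d \to M$ using universality, set $\psi_{0} = \alpha(\chi_{0}) : c \to M$, and observe that equivariance together with the orbit description of $F(d)$ constrains $\alpha$ to be pre-composition by a uniquely determined arrow $c \to d$ in $\textrm{f.p.}{\mathbb T}\textrm{-mod}(\Set)$ — the existence of this arrow being precisely the consequence of the lifting property in the definition of ultrahomogeneity. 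Finally, density reduces to showing that every transitive continuous $\textrm{Aut}(M)$-set $\textrm{Aut}(M)/U$, with $U$ the stabilizer of a finite tuple $\vec{a}$ of elements of $M$, is covered by an object in the image of $F$: by Theorem \ref{Onetoposmany_presheafcomplete} the tuple $\vec{a}$ satisfies some $\mathbb T$-irreducible formula, which presents a finitely presentable model $c$ together with a canonical arrow $\chi_{\vec{a}} : c \to M$ whose stabilizer is exactly $U$, producing an equivariant surjection $F(c) \twoheadrightarrow \textrm{Aut}(M)/U$.

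The main obstacle I expect is this last density step: one must be confident that the stabilizers of finite tuples really form a basis of open neighbourhoods of the identity of $\textrm{Aut}(M)$ (which is the \emph{definition} of the pointwise-convergence topology and is unproblematic), \emph{and} that each such stabilizer is realised as the stabilizer of some generator tuple of a finitely presentable $\mathbb T$-model mapping into $M$. This latter assertion is exactly where the presheaf-type hypothesis interacts with universality: every tuple of $M$ generates a finitely presentable submodel via the $\mathbb T$-irreducible formula it satisfies, and universality ensures enough such tuples exist to exhaust the open neighbourhood basis. A secondary but technical point is confirming that the morphism of sites induced by $F$ is \emph{cover-reflecting} in the sense required by the Comparison Lemma, which reduces to checking that every non-empty sieve in the orbit category pulls back to a non-empty sieve on $\textrm{f.p.}{\mathbb T}\textrm{-mod}(\Set)^{\textrm{op}}$ — an immediate consequence of fullness of $F$ combined with the AP.
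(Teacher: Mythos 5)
The paper simply cites this theorem from \cite{OCG} and gives no proof of its own, so there is nothing in-paper to compare against; I am reviewing your argument on its own merits.

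Your overall plan — realise $F$ as a morphism of sites into the canonical atomic site of transitive continuous $\textrm{Aut}(M)$-sets and apply the Comparison Lemma — is the natural one and is almost certainly the skeleton of the argument in \cite{OCG}. The points you make about continuity of the actions (finite presentability of $c$ makes the stabiliser of each $\chi : c \to M$ open), about cover-preservation (via homogeneity, every $\chi : c \to M$ extends along any single arrow in a non-empty sieve), and about cover-density relative to the pointwise-convergence basis, are all on the right track.

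The weak link is the claim that $F$ itself is fully faithful. For faithfulness, the assertion that "ultrahomogeneity, applied inside $d$ to the diagram of images of generators, forces $f = g$" is not an argument. What you actually have is $\chi \circ f = \chi \circ g$ for \emph{every} $\chi : d \to M$; to get $f = g$ you need $M$ to separate the two arrows, which is a conservativity property of $M$ with respect to cartesian sequents over the signature. This is not an immediate consequence of universality and ultrahomogeneity as stated: without additional hypotheses (for example that arrows of $\textrm{f.p.}{\mathbb T}\textrm{-mod}(\Set)$ are monomorphisms, as happens after injectivization, or a separate conservativity assumption), two distinct arrows $c \to d$ can be equalised by every homomorphism $d \to M$. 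For fullness, what equivariance of $\alpha$ actually gives you, after choosing $\chi_{0} : d \to M$ and setting $\psi_{0} = \alpha(\chi_{0})$, is the inclusion of open stabilisers $\textrm{Stab}(\chi_{0}) \subseteq \textrm{Stab}(\psi_{0})$; the heart of the matter is to pass from this stabiliser inclusion to a factorization $\psi_{0} = \chi_{0} \circ f$ for some $f : c \to d$ of finitely presentable models. That is a genuine "stabiliser-inclusion implies definability" statement, not the lifting property in the definition of ultrahomogeneity (which produces an automorphism of $M$ intertwining two embeddings, not a factorization through $\chi_{0}$). Proving it requires exploiting the atomicity and Booleanness of $\Sh({\textrm{f.p.}{\mathbb T}\textrm{-mod}(\Set)}^{\textrm{op}}, J_{at})$, or an explicit analysis of the irreducible formula presenting the finitely presentable model generated by $\chi_{0}(\vec{\eta}_{d}) \cup \psi_{0}(\vec{\xi}_{c})$; in either case it is not a one-line consequence of the definitions and should be spelled out. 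A smaller but related imprecision: invoking Theorem \ref{Onetoposmany_presheafcomplete} to say that "the tuple $\vec{a}$ satisfies some $\mathbb T$-irreducible formula" is not quite what that theorem says; what you get (from the covering of every formula by irreducible ones, Theorem \ref{CriterionPresheafType}) is that $\vec{a}$ is the image of a tuple satisfying an irreducible formula, possibly of longer length, so the stabiliser of that longer tuple is contained in $U$ rather than equal to it — which is fine for the density argument, but the bookkeeping should be done carefully.
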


\begin{remark}\label{rem_independence}
One can have in general many different $M$ with non-isomorphic automorphism groups $\textrm{Aut}(M)$ whose associated toposes $\Cont(\textrm{Aut}(M))$ are equivalent (this phenomenon of course does not only hold in the topological setting but also in the localic one). Take for instance the Schanuel topos, that is the topos of Theorem \ref{galois} where $\mathbb T$ is the injectivization of the empty theory over a one-sorted signature; by the theorem, it can be represented as $\Cont(\textrm{Aut}(M))$ for \emph{any} infinite set $M$. This is in fact a quite deep phenomenon, which shows that one cannot directly relate the different $M$'s which each other without using the associated topos as a `bridge'.

We would actually like to interpret the relationships between $\ell$-adic cohomologies for different $\ell$'s as arising from the fact that they yield different representations (as continuous actions of their respective automorphism groups) of the same atomic two-valued \ac motivic topos' (cf. section \ref{sec:motivictheories}).  
\end{remark}

The following proposition illuminates the relationship between homogeneity of a model $M$ and surjectivity of the maps $Mf$ (for $f$ a function symbol over the signature of the theory).  

\begin{proposition}
Let $\mathbb T$ be a theory of presheaf type over a signature $\Sigma$, $f:A_{1}, \ldots, A_{n}\to B$ a function symbol over $\Sigma$ and $\psi(y^{B})$ a formula presenting a $\mathbb T$-model such that the sequent $(\psi(f(\vec{x})) \vdash_{\vec{x}} \bot)$ is not provable in $\mathbb T$. Then the sequent $(\psi \vdash_{y} (\exists \vec{x})f(\vec{x})=y)$ is provable in the theory ${\mathbb T}'$ of homogeneous $\mathbb T$-models; in particular, for any homogeneous $\mathbb T$-model $M$, the map $Mf:MA_{1}\times \cdots \times MA_{n}\to MB$ is surjective onto $[[y. \psi]]_{M}$.   
\end{proposition}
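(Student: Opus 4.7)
The plan is to exhibit the sequent $(\psi \vdash_{y} (\exists \vec{x}) f(\vec{x})=y)$ as a provable consequence in ${\mathbb T}'$ of an appropriately chosen instance of the axiom schema for ${\mathbb T}'$ recalled above (namely a sequent of the form $(\psi \vdash_{y} (\exists \vec{x_i}) \tau(\vec{x_i}, y))$ with $\tau$ a $\mathbb T$-provably functional formula from some $\phi_i$ presenting a $\mathbb T$-model to $\{y. \psi\}$). The obvious candidate $\phi(\vec{x}) := \psi(f(\vec{x}))$ need not itself be $\mathbb T$-irreducible, so I would first apply Theorem \ref{CriterionPresheafType} to the formula $\{\vec{x}. \psi(f(\vec{x}))\}$, obtaining $\mathbb T$-irreducible formulae $\{\vec{x_i}. \phi_i\}$ and $\mathbb T$-provably functional formulae $\theta_i(\vec{x_i}, \vec{x})$ from $\{\vec{x_i}. \phi_i\}$ to $\{\vec{x}. \psi(f(\vec{x}))\}$ such that
\[
\psi(f(\vec{x})) \vdash_{\vec{x}} \bigvee_{i \in I} (\exists \vec{x_i}) \theta_i(\vec{x_i}, \vec{x})
\]
is provable in $\mathbb T$. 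The assumption that $(\psi(f(\vec{x})) \vdash_{\vec{x}} \bot)$ is not provable in $\mathbb T$ forces $I$ to be non-empty (since $\{\vec{x}. \bot\}$ fails to be $\mathbb T$-irreducible), and by Theorem \ref{Onetoposmany_presheafcomplete} each $\phi_i$ presents a $\mathbb T$-model; fix any such $i$.

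Next I would define
\[
\tau(\vec{x_i}, y) \;:=\; (\exists \vec{x})\bigl(\theta_i(\vec{x_i}, \vec{x}) \wedge f(\vec{x}) = y\bigr)
\]
and verify that $\tau$ is $\mathbb T$-provably functional from $\{\vec{x_i}. \phi_i\}$ to $\{y. \psi(y)\}$: the containment $\tau \vdash_{\vec{x_i}, y} \phi_i \wedge \psi(y)$ follows from $\theta_i \vdash \phi_i \wedge \psi(f(\vec{x}))$; the existence clause $\phi_i \vdash_{\vec{x_i}} (\exists y) \tau$ follows from $\phi_i \vdash_{\vec{x_i}} (\exists \vec{x}) \theta_i$ by instantiating $y := f(\vec{x})$; and single-valuedness of $\tau$ reduces to that of $\theta_i$ because $f$ is a function symbol. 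Applying the axiom schema of ${\mathbb T}'$ to $\phi_i$, $\psi$ and $\tau$ then yields the provability in ${\mathbb T}'$ of
\[
\psi(y) \vdash_{y} (\exists \vec{x_i})(\exists \vec{x})\bigl(\theta_i(\vec{x_i}, \vec{x}) \wedge f(\vec{x}) = y\bigr),
\]
from which discarding the existential quantifier over $\vec{x_i}$ and the conjunct $\theta_i$ delivers $(\psi(y) \vdash_{y} (\exists \vec{x}) f(\vec{x}) = y)$. The surjectivity statement for $Mf$ onto $[[y. \psi]]_M$ at any homogeneous $\mathbb T$-model $M$ is then an immediate consequence of the soundness of geometric logic.

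The main obstacle, and the key technical move, is the presentability issue in the first step: $\psi(f(\vec{x}))$ cannot be fed directly into the axiom schema of ${\mathbb T}'$ because it need not present a $\mathbb T$-model, and the cover provided by Theorem \ref{CriterionPresheafType} is used precisely to replace it by a legitimate irreducible $\phi_i$, absorbing the extra data $\theta_i$ together with the equation $f(\vec{x}) = y$ into the composite provably functional formula $\tau$. Beyond this, the argument is a routine piece of geometric-logic bookkeeping.
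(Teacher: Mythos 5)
Your proof is correct and follows the same route as the paper's: cover $\{\vec{x}.\psi(f(\vec{x}))\}$ by $\mathbb T$-irreducible formulae via Theorem \ref{CriterionPresheafType}, pick one member of the (necessarily non-empty) cover, compose the resulting $\mathbb T$-provably functional formula with $[y=f(\vec{x})\wedge\psi(y)]$ to land in $\{y.\psi\}$, and invoke the axiom schema for ${\mathbb T}'$. The only cosmetic difference is that the paper phrases the key step as a composition of arrows in ${\cal C}_{\mathbb T}$ and keeps the conjunct $\psi(y)$ explicit in $\tau$, while you drop it (harmlessly, since $\theta_i$ already entails $\psi(f(\vec{x}))$).
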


\begin{proof}
Since the formula $\{\vec{x}.\psi(f(\vec{x})\}$ is not $\mathbb T$-provably equivalent to $\{\vec{x}. \bot\}$, there exists a non-empty covering of it in the syntactic category by $\mathbb T$-irreducible formulae (cf. Theorem \ref{CriterionPresheafType}). In particular, there exists an arrow $[\theta]:\{\vec{z}. \chi\}\to \{\vec{x}.\psi(f(\vec{x})\}$ in ${\cal C}_{\mathbb T}$ where the formula-in-context $\{\vec{z}. \chi\}$ presents a $\mathbb T$-model. By composing $[\theta]$ with the arrow $[y=f(\vec{x}) \wedge \psi(y))]:\{\vec{x}.\psi(f(\vec{x})\} \to \{y. \psi\}$ we obtain the arrow
\[
[\tau(\vec{z}, y)]:=[(\exists \vec{x})(\theta(\vec{z}, \vec{x}) \wedge y=f(\vec{x}) \wedge \psi(y))]:\{\vec{z}. \chi\}\to \{y. \psi\}.   
\]
By the syntactic description of the theory of homogeneous $\mathbb T$-models given above, it follows that the sequent $(\psi \vdash_{y} (\exists \vec{z})\tau(\vec{z}, y))$ is provable in ${\mathbb T}'$; but this sequent clearly entails the sequent $(\psi \vdash_{y} (\exists \vec{x})f(\vec{x})=y)$, whence our thesis follows.
\end{proof}

\subsubsection{Homogeneity and injectivizations}\label{sec_homoginj}

As shown by the following theorem, the notion of homogeneous $\mathbb T$-model essentially trivializes in the case of a theory $\mathbb T$ in which the formulae $\{\vec{x}. \top\}$ present a model (which is always the case if $\mathbb T$ is algebraic) or, more generally, if the formulae $\psi(\vec{x})$ which present a $\mathbb T$-model are satisfied by `too many' elements. A simple way for preventing such a situation is to make all the $\mathbb T$-model homomorphisms sortwise injective and then consider a presheaf completion of the resulting theory. Recall from \cite{OCPT} that a \emph{presheaf completion} of a geometric theory $\mathbb T$ is a theory of presheaf type given by an expansion $\mathbb S$ of $\mathbb T$ which is classified by the topos $[\textrm{f.p.} {\mathbb T}\textrm{-mod}(\Set), \Set]$; in particular, the finitely presentable $\mathbb S$-models can be identified with the finitely presentable $\mathbb T$-models if the category of set-based $\mathbb T$-models is finitely accessible (i.e., equivalent to the ind-completion of some small category). A given geometric theory has many different presheaf completions in general (and it always has one), but all of them are Morita-equivalent, that is they have the same classifying topos.

In fact, the notion of homogeneous model is mostly relevant when all the $\mathbb T$-model homomorphisms are sortwise injective. The following construction turns a geometric theory $\mathbb T$ into a geometric theory ${\mathbb T}_{i}$ whose set-based models can be identified with those of $\mathbb T$ and whose model homomorphisms are precisely the sortwise injective $\mathbb T$-model homomorphisms.

\begin{definition}\label{injectivization}
Let $\mathbb T$ be a geometric theory over a signature $\Sigma$. The \emph{injectivization} ${\mathbb T}_{i}$ of $\mathbb T$ is the geometric theory obtained from $\mathbb T$ by adding a binary predicate $D_{A}\mono A, A$ for each sort $A$ over $\Sigma$ and the coherent sequents
\[
(D_{A}(x^{A}, y^{A}) \wedge x^{A}=y^{A}) \vdash_{x^{A}, y^{A}} \bot)
\]
and 
\[
(\top \vdash_{x^{A}, y^{A}} D_{A}(x^{A}, y^{A}) \vee x^{A}=y^{A}).
\]
\end{definition}

\begin{theorem}\label{thm_triviality}
Let $\mathbb T$ be a theory of presheaf type with the property that its category $\textrm{f.p.}{\mathbb T}\textrm{-mod}(\Set)$ of finitely presentable models satisfies AP, and ${\mathbb T}'$ be the theory of homogeneous $\mathbb T$-models. Then
\begin{enumerate}[(i)]
\item If the category $\textrm{f.p.}{\mathbb T}\textrm{-mod}(\Set)$ has an initial object $I$ then for any geometric formula $\psi(\vec{x})$ presenting a $\mathbb T$-model and any tuple $\vec{c}$ of constants of the same kind as $\vec{x}$ such that $I \vDash \psi(\vec{c})$, the sequent $(\psi \vdash_{\vec{x}} \vec{x}=\vec{c})$ is provable in ${\mathbb T}'$;

\item If there exist both the free $\mathbb T$-model on one generator and the free $\mathbb T$-model on two generators for a given sort $A$ over the signature of $\mathbb T$ then the sequent $(\top \vdash_{x^{A}, y^{A}} x^{A}= y^{A})$ is provable in the theory ${\mathbb T}'$. 
\end{enumerate}
\end{theorem}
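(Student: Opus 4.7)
My plan is to apply directly the explicit syntactic axiomatization of \({\mathbb T}'\) recalled just before Theorem \ref{mainFraisse}: \({\mathbb T}'\) is obtained from \({\mathbb T}\) by adjoining, for every pair of formulae \(\{\vec{x}. \phi\}\), \(\{\vec{y}. \psi\}\) presenting \({\mathbb T}\)-models and every \({\mathbb T}\)-provably functional formula \(\theta(\vec{x}, \vec{y})\) from the first to the second, the sequent \((\psi \vdash_{\vec{y}} (\exists \vec{x})\theta)\). In each case it therefore suffices to exhibit a provably functional \(\theta\) whose associated axiom is, or immediately entails, the desired sequent.

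For (i), the initial object \(I\) of \(\textrm{f.p.}{\mathbb T}\textrm{-mod}(\Set)\) corresponds under the equivalence of Theorem \ref{Onetoposmany_presheafcomplete} to the terminal object \(\{[]. \top\}\) of \({\cal C}_{\mathbb T}^{\textrm{irr}}\), and is therefore presented by \(\{[]. \top\}\); since it is moreover finitely presentable, it is in fact initial in all of \({\mathbb T}\textrm{-mod}(\Set)\). I would take \(\theta(\vec{x}) := (\vec{x}=\vec{c})\) as the candidate formula from \(\{[]. \top\}\) to \(\{\vec{x}. \psi\}\); the existence and uniqueness clauses for functionality are trivial, while the entailment clause \((\vec{x}=\vec{c} \vdash_{\vec{x}} \psi)\) reduces, by the presheaf-type hypothesis, to checking that \(N \vDash \psi(\vec{c})\) for every set-based \({\mathbb T}\)-model \(N\). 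This in turn follows from \(I \vDash \psi(\vec{c})\) via the unique homomorphism \(I \to N\), which preserves the interpretations of the constants \(\vec{c}\), together with the preservation of geometric formulae under \({\mathbb T}\)-model homomorphisms. The axiom of \({\mathbb T}'\) attached to this choice of \(\theta\) is then precisely the sequent \((\psi \vdash_{\vec{x}} \vec{x}=\vec{c})\).

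For (ii), writing \(F_{1}\) and \(F_{2}\) for the free \({\mathbb T}\)-models on one and two generators of sort \(A\), these are presented respectively by \(\{z^{A}. \top\}\) and \(\{x^{A}, y^{A}. \top\}\). The natural candidate is \(\theta(z, x, y) := (z=x \wedge z=y)\), which corresponds under the equivalence to the canonical homomorphism \(F_{2} \to F_{1}\) identifying the two generators; all three functionality clauses are immediate. The associated axiom of \({\mathbb T}'\) is \((\top \vdash_{x^{A}, y^{A}} (\exists z^{A})(z=x \wedge z=y))\), which manifestly entails \((\top \vdash_{x^{A}, y^{A}} x^{A}=y^{A})\). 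The only mildly non-routine point in the whole argument is the entailment check in (i), which uses essentially both the initiality of \(I\) and the preservation of geometric formulae under homomorphisms; everything else is a direct unfolding of the presheaf-type correspondence between presenting formulae and finitely presentable models.
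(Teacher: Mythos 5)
Your proof is correct and follows essentially the same route as the paper: in both (i) and (ii) you exhibit the same provably functional formula (respectively $\vec{x}=\vec{c}$ from $\{[].\top\}$ to $\{\vec{x}.\psi\}$, and the "diagonal" from the one-generator to the two-generator presenting formula) and then read off the desired sequent from the syntactic axiomatization of ${\mathbb T}'$. The only difference is cosmetic: for the entailment clause in (i) you redo the short semantic argument of Lemma~\ref{lem_fp} inline (initiality of $I$ plus preservation of geometric formulae under homomorphisms and completeness of $\mathbb T$ for set-based models), whereas the paper simply invokes that lemma.
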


\begin{proof}
(i) If the formula $\{[]. \top\}$ presents a model and $I \vDash \psi(\vec{c})$ then by Lemma \ref{lem_fp} the sequent $(\top \vdash_{[]} \psi(\vec{c}))$ is provable in $\mathbb T$. We thus have an arrow $[\vec{x}=\vec{c}]: \{[]. \top\}\to \{\vec{x}. \psi\}$ in the syntactic category of $\mathbb T$, whence by the syntactic description of ${\mathbb T}'$ given above the sequent $(\psi \vdash_{\vec{x}} \vec{x}=\vec{c})$ is provable in ${\mathbb T}'$.

(ii) If both the formulae $\{x^{A}. \top\}$ and $\{y^{A}, z^{A}. \top\}$ present a $\mathbb T$-model then the sequent corresponding to the arrow $[z=x \wedge y=x]:\{x^{A}. \top\} \to \{y^{A}, z^{A}. \top\}$, namely $(\top \vdash_{y, z} (\exists x)(y=x \wedge z=x)$, which is provably equivalent to the sequent $(\top \vdash_{x^{A}, y^{A}} x^{A}= y^{A})$, is provable in ${\mathbb T}'$, as required. 
\end{proof}

As an illustration of these results, consider the algebraic theory $\mathbb T$ of Boolean algebras. The theory of homogeneous $\mathbb T$-models is trivial, that is its unique model is the zero Boolean algebra in which $0=1$ (take $c=0$ and $\psi=\top$ in the theorem). On the other hand, the theory of homogeneous ${\mathbb T}_{i}$-models is a very interesting one; indeed, its models are precisely the atomless Boolean algebras. The algebra $\{0,1\}$ constitutes the initial object $I$ of the category of ${\mathbb T}_{i}$-models, but the free models on one or two generators no longer exist in this theory. For instance, the 4-element Boolean algebra is no longer presented by the formula $\{x. \top\}$, but it is presented by the formula $\{x. x\neq 0 \wedge x\neq 1\}$, which in fact no longer holds in $I$ when evaluated at some constant (either $0$ or $1$) over the signature of $\mathbb T$.

\section{Motivic theories}\label{sec:motivictheories}

\subsection{The general strategy}\label{sec:generalstrategy}

Our strategy for building an atomic two-valued topos classifying (co)homology theories belonging to a certain class $\cal K$ is as follows.

Once chosen a signature $\Sigma_{\cal K}$ as in section \ref{sec:topdown} so that all the (co)homology theories in $\cal K$ can be regarded as $\Sigma_{\cal K}$-structures, one should define a theory ${\mathbb S}_{\cal K}$ which axiomatizes precisely the $\Sigma_{\cal K}$-substructures of the (co)homology theories in $\cal K$. Indeed, if we want the (co)homology theories in $\cal K$ to be ${\cal C}$-universal (and note that universality follows from homogeneity in presence of an initial object), where $\cal C$ is the category of finitely presentable models of the injectivization ${\mathbb S}^{i}_{\cal K}$ of ${\mathbb S}_{\cal K}$, the theory ${\mathbb S}^{i}_{\cal K}$ is \emph{forced} (up to presheaf completion) to be the theory which axiomatizes the ${\mathbb S}_{\cal K}$-substructures of a (co)homology theory in $\cal K$.

It is important to observe that the exactness conditions
\[
(g(x)=0 \vdash_{x} (\exists y)f(y)=x),
\]
for all distinguished pairs (i.e. pairs fitting in a long exact sequence) $(f, g)$ are satisfied by all the homology theories in $\cal K$ but are \emph{not} inherited by their $\Sigma_{\cal K}$-substructures. On the other hand, any algebraic sequent (i.e., any sequent whose premise is a finite conjunction of formulae of the form $w(\vec{x})=0$ and whose conclusion is a formula of the same form) which is satisfied by the (co)homology theories in $\cal K$ is valid in every $\Sigma_{\cal K}$-substructure of a (co)homology theory in $\cal K$.

It is thus natural to define ${\mathbb S}_{\cal K}$ as the set of algebraic sequents over $\Sigma_{\cal K}$ which are satisfied by all the (co)homology theories in $\cal K$ (or which follow from the usual axioms for (co)homology theories, as suitably formalized within geometric logic over $\Sigma_{\cal K}$). One should actually define ${\mathbb S}_{\cal K}$ to consist of \emph{all} the sequents over $\Sigma_{\cal K}$ which are satisfied by $\Sigma_{\cal K}$-substructures of theories in $\cal K$ (including in particular all the geometric sequents over $\Sigma_{\cal K}$ which are satisfied by the theories in $\cal K$ and whose conclusion does not contain any existential quantification), but there are technical reasons to believe that the algebraic sequents suffice (in the sense that they entail all the others).

It is natural to wonder whether it makes sense to look for a simple, explicit axiomatization of the theory ${\mathbb S}_{\cal K}$. To this end, we observe that the following axiom schemes exhibit sets of algebraic sequents which are derivable from the above-mentioned exactness conditions.

Below we say that an arrow $s$ in the syntactic category of ${\mathbb S}_{\cal K}$ factors through a distinguished pair if there exists a distinguished pair $(f, g)$ and an arrow $t$ such that $s = g\circ f \circ t$ in ${\mathbb S}_{\cal K}$.

\begin{enumerate}[(i)]
\item \emph{Axiom scheme E1:}
\[
(g(x)=0 \vdash_{x} s_{1}(x)=s_{2}(x)),
\]
for any distinguished pair $(f,g)$, where $f:c\to d$ and $g:d\to e$, and any $s_{1}, s_{2}:d \to e'$ such that $s_{1}\circ f=s_{2}\circ f$.

\item \emph{Axiom scheme E2:}
\[
(g(x)=0 \vdash_{x} s(x)=0),
\]
for any distinguished pair $(f,g)$, where $f:c\to d$ and $g:d\to e$, and any $s:d \to e'$ such that $s\circ f$ factors through a distinguished pair.

\item \emph{Axiom scheme E3:}
\[
(g_{0}(x)=0 \wedge g(\chi(x))=t(x) \vdash_{x} h(\chi(x))=t'(x))
\]
for any distinguished pairs $(f_{0}, g_{0})$ and $(f, g)$ and arrows $p, \chi, h, t, t'$ such that $h\circ f$ factors through a distinguished pair and $g\circ p=t\circ f_{0}$, $h\circ p=t'\circ f_{0}$.
\end{enumerate}

The fact that axiom schemes E1 and E2 are derivable from the exactness conditions is clear. It is instructive to verify that also the more complicated axiom scheme E3 follows from them. If $x=f_{0}(y)$ then $g(\chi(x))=t(x)$ implies that $g(\chi(f_{0}(y)))=t(f_{0}(y))=g(p(y))$. Therefore the element $\chi(f_{0}(y)-p(y))$ is in the kernel of $g$, whence by E2 $h(\chi(f_{0}(y)-p(y)))=0$, that is $h(\chi(f_{0}(y))=h(p(y)))$. But $h(p(y))=t'(f_{0}(y))$, whence $h(\chi(f_{0}(y)))=t'(f_{0}(y))$, that is $h(\chi(x))=t'(x)$, as required.

The existence of such complicated axioms which are derivable from the exactness conditions shows that there is not much hope of finding an explicit (finite) set of sequents axiomatizing the theory ${\mathbb S}_{\cal K}$.  

Notice that the theory ${\mathbb S}_{\cal K}$ should prove any sequent of the form $(\top \vdash_{x^{\name{X}}} x=0)$ for each $X$ such that $H(X)=0$ for every $H$ in $\cal K$. 

To get a non-trivial theory of homogeneous ${\mathbb S}_{\cal K}$-models (cf. Theorem \ref{thm_triviality}), we consider the injectivization ${\mathbb S}_{\cal K}^{i}$ of the theory ${\mathbb S}_{\cal K}$, whose category of set-based models has as objects the ${\mathbb S}_{\cal K}$-models and as arrows the injective homomorphisms between them.

The category $\cal C$ of finitely presentable models of ${{\mathbb S}_{\cal K}}^{i}$ of the theory ${\mathbb S}_{\cal K}$ should satisfy the amalgamation property and possibly have an initial object (so that the joint embedding property is automatically satisfied and universality follows from homogeneity).

Given the form of the axioms of ${\mathbb S}_{\cal K}$, it follows from Propositions 6.11 and 6.13 \cite{OCPT} that the finitely presentable ${\mathbb S}_{\cal K}^{i}$-models are precisely the \emph{finitely generated} ones. Recall that a model $M$ of a many-sorted theory is said to be finitely generated if there exists a finite set of elements of the sets $MA$ (where $A$ is a sort over the signature of the theory) such that any other elements can be obtained by applying terms written in the signature of the theory to this set of generators (notice that this is a global condition which does not imply that the model be finitely generated \emph{sortwise}). 

In order to apply Theorem \ref{mainFraisse}, one needs to turn the theory ${\mathbb S}_{\cal K}^{i}$ into a theory $\tilde{{\mathbb S}_{\cal K}^{i}}$ of presheaf type classified by the topos $[{\cal C}, \Set]$. By the results of section 7.2 of \cite{OCPT}, this can be achieved, by adding predicates $R$ for presenting all the models in $\cal C$, and disjunctive axioms involving these $R$ which fix their interpretations in any set-based model of $\tilde{{\mathbb S}^{i}_{\cal K}}$ as the set-theoretic complement of the interpretation of some geometric formula over the signature of ${\mathbb S}^{i}_{\cal K}$. The \emph{set-based models} of ${\mathbb S}_{\cal K}^{i}$ can thus be canonically and uniquely endowed with the structure of $\tilde{{\mathbb S}^{i}_{\cal K}}$-models, so to have an equivalence between the category of set-based ${\mathbb S}_{\cal K}^{i}$-models and that of set-based $\tilde{{\mathbb S}^{i}_{\cal K}}$-models (see section \ref{sec:basictheorypresheaftype} for an exemplification of these remarks). For more on presheaf completions of geometric theories, see section 7.2 of \cite{OCPT}. 

The \emph{motivic theory of $\cal K$} will be the theory of homogeneous $\tilde{{{\mathbb S}^{i}_{\cal K}}}$-models. By Theorem \ref{mainFraisse}, this is an atomic and complete theory, and any two of its models satisfy exactly the same first-order properties. The fact that $\ell$-adic and $\ell'$-adic cohomology groups have the same dimension over the respective coefficients fields will follow as a consequence of this, if we succeed in proving that both are homogeneous $\tilde{{{\mathbb S}^{i}_{\cal K}}}$-models. 

Notice that for all the theories in $\cal K$ to be  homogeneous as $\tilde{{{\mathbb S}^{i}_{\cal K}}}$-models, it is necessary that for any sort $\name{X}$ over the signature $\Sigma_{{\cal K}}$ and any $H$ and $H'$ in $\cal K$, $H(X)=0$ if and only if $H'(X)=0$. Indeed, if $\cal C$ has an initial object then every homogeneous $\tilde{{{\mathbb S}^{i}_{\cal K}}}$-model is universal, and injective homomorphisms preserve and reflect equalities. In other words, the possible reasons why a homogeneous (co)homology theory might be zero at a certain sort should be encoded in our basic theory of presheaf type $\tilde{{{\mathbb S}^{i}_{\cal K}}}$. In fact, this is perfectly in line with the general philosophy of motives which prescribes that these reasons should be \ac geometric', whence formalizable over the signature of $\tilde{{{\mathbb S}^{i}_{\cal K}}}$.
 
It is worth to note, for the sake of comparison between this \ac top-down' approach based on atomic two-valued toposes and the \ac bottom-up' one implemented in \cite{NoriSyntactic} that, given a $\cal C$-universal and $\cal C$-ultrahomogeneous homology theory $T:N \to {\mathbb Q}\textrm{-vect}$, the $\mathbb Q$-linear category of motives ${\cal C}_{T}$ associated to it as in \cite{NoriSyntactic} is a full subcategory of the topos $\Cont(\textrm{Aut}_{t}(T))$.

In the following section we identify a few axioms, expressible in geometric logic, which are satisfied by all the usual homology theories and their substructures. We should mention that many different approaches have been proposed to the problem of axiomatizing (co)homology theories (for instance, the Weil or Bloch-Ogus axioms). As remarked above, the theory ${{\mathbb S}_{\cal K}}$ should be obtained by retaining only the axioms which are inherited by $\Sigma_{\cal K}$-substructures.

\subsection{Some axioms for homology theories}\label{sec:someaxioms}

To fix ideas, to define our signature $\Sigma_{{\cal K}}$ let us start with the signature $\Sigma_{N}$ of \emph{Nori's quiver} $N_{k}$ (where $k$ is the base field), having one sort $\name{c}$ for each vertex $c$ of $N$, one function symbol $\name{f}:\name{c}\to \name{d}$ for each edge $f:c\to d$ of $N$ and the abelian group structure on each sort $\name{c}$.
 
Recall that $N_{k}$ has as objects the triplets $(X, Y, i)$ where $X$ is a variety over $k$, $Y$ is a closed subvariety of $X$ and $i$ is an integer, and has two different kinds of edges:
\begin{enumerate}[(1)]
\item (functoriality) For every morphism (of schemes) $f:X\to X'$ such that $f(Y)\subseteq Y'$ and every integer $i$, an arrow 
\[
p^{i}_{f:(X, Y)\to (X', Y')}:(X, Y, i)\to (X', Y', i);
\]

\item (boundary) For every integer $i$ and every chain $Z\subseteq Y \subseteq X$ of closed subschemes of $X$, an arrow
\[
\delta_{(X, Y, Z, i)}:(X, Y, i)\to (Y, Z, i-1).
\]
\end{enumerate}
   
Starting from the empty theory over the signature $\Sigma_{N}$, let us expand it by imposing the (coherent) structure of a field of characteristic $0$ on the sort $\name{0}=\name{(\textrm{Spec}(k), \emptyset, 0)}$ (notice that the property of having characteristic $0$ can be formalized by the set of sequents $(p.1=0 \vdash \bot)$ for all primes $p$) and the structure of a vector space over this field on all the other sorts $\name{c}$ (this is formalized by taking one binary function symbol $\name{0}\times \name{c}\to \name{c}$ for each $c$ in $N_{k}$). The intended interpretation of $\name{0}$ is the field of coefficients of the homology theory. It is natural to axiomatize the notion of a field by using the signature for von Neumann regular rings, so that the notion of finitely generated field coincides with the model-theoretic notion of finitely generated structure for models of this theory.

An important property of homology theories is that for any disjoint sum $X \coprod Y$ of two varieties $X$ and $X'$ and subvarieties $Y$ and $Y'$, $H(X\coprod X', Y\coprod Y', i)$ is isomorphic to the direct sum $H(X, Y, i)\oplus H(X', Y', i)$. This property is an easy consequence of exactness if one assumes the following property: for any pair $(X, Y)$ and any open $Z$ of $X$ contained in $Y$, the canonical homomorphisms $H(X\setminus Z, Y\setminus Z, i)\to H(X, Y, i)$ are isomorphisms. It is natural to enrich the signature of our theory so to name the inverses to this canonical homomorphisms and add the equational axioms specifying that they are inverses to them. Thanks to these axioms, when treating $0$-motives, we will be able to restrict our attention to varieties which consist of just one point defined over $k$ (that is, to the varieties of the form $\textrm{Spec}(k')\to \textrm{Spec}(k)$ for a finite field extension $k'$ of $k$), since every variety of dimension $0$ is a finite disjoint union of varieties of this form. 

The following axioms are satisfied by all homology theories and inherited by substructures of them, so they are provable in ${\mathbb S}_{\cal K}$.

\emph{Functoriality}:
\[
(\top \vdash_{x} \name{p_{g}}(\name{p_{f}}(x))=\name{p_{h}}(x)) 
\]
for any morphisms of schemes $f$ and $g$ with composite $h$; and
\[
(\top \vdash_{x} \name{p_{f}}(x)=x)
\]
for $f$ equal to an identity morphism on a scheme. 

\emph{Border naturality}:
\[
(\top \vdash_{x} \name{p^{n-1}_{f:(Y, Z)\to (Y', Z')}}(\name{\delta_{(X, Y, Z, n)}}(x))=\name{\delta_{(X', Y', Z',n)}}(\name{p^{n}_{f:(X, Y)\to (X', Y')}}(x)))
\]
for any closed subschemes $Z \subseteq Y \subseteq X$ and any morphism of schemes $f:(X, Y, Z, n)\to (X', Y', Z', n)$;

\emph{Complex condition}:
\[
(\top \vdash_{x} \name{p_{g}}(\name{p_{f}}(x)))=0
\]
for any distinguished pair $(f,g)$ of arrows in Nori's quiver $N_{k}$.

Notice that, whilst the complex condition is algebraic, it is not natural, both from a logical and an algebraic point of view, to put the exactness conditions among the axioms of our basic theory of presheaf type. Indeed, there are too few exact subsequences of given homology exact sequence, and the theory of exact sequences does not look at all like a theory of presheaf type. This means in particular that we should recover the exactness requirement as a particular case of homogeneity. This is indeed the case, as shown by Theorem \ref{thm_exacthom} (for the theory $\mathbb I$ defined below).

Finally, we add some axioms concerning the connected components of varieties containing a rational point, which ensure the existence of an initial object in the category of models of our theory (cf. Lemma \ref{initialobject} below). 
 
To this end, we observe that there are exactly three different kinds of edges in Nori's quiver $N_{k}$ whose source or target are the triplet $(\textrm{Spec}(k), \emptyset, 0)$:
\begin{enumerate}[(1)]
\item $p_{f_{x}}:(\textrm{Spec}(k), \emptyset, 0)\to (X, Y, 0)$ for any point $x$ of $X$ over $\mathbb Q$ (where $f_{x}$ is the morphism of schemes $\textrm{Spec}(k) \to X$ corresponding to the point $x$);

\item $p_{!_{X}}:(X, \emptyset, 0)\to (\textrm{Spec}(k), \emptyset, 0)$, where $!_{X}$ is the unique morphism $X\to \textrm{Spec}(k)$;

\item $\delta_{(X, \textrm{Spec}(k), \emptyset, 1)}:(X, \textrm{Spec}(k), 1) \to (\textrm{Spec}(k), \emptyset, 0)$.
\end{enumerate}

Notice the following relations: 
\[
p_{!_{X}}\circ p_{f_{x}}=id_{(\textrm{Spec}(k), \emptyset, 0)},
\]
for any $k$-point $x$ of $X$.

\begin{lemma}\label{connectedcomponents}
Let $X$ be a variety over $k$. Then $X$ has a finite number of connected components containing a rational point and for every homology theory $H$, denoting by $H_{f_{x}}:H(\textrm{Spec}(k), \emptyset, 0)\to H(X, \emptyset, 0)$ the $\mathbb Q$-linear map in homology induced by a rational point $x$, the value $H_{f_{x}}(1)$ (where $1$ is the unit of the ring $H(\textrm{Spec}(k), \emptyset, 0)$) only depends on the connected component of $X$ containing $x$ and the values $H_{f_{x_{1}}}(1), \ldots, H_{f_{x_{n}}}(1)$, where $x_{1}, \ldots, x_{n}$ are representatives for the connected components of $X$ containing a rational point, are linearly independent over $\mathbb Q$.
\end{lemma}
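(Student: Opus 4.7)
The plan is to establish the three assertions of the lemma in order, decomposing $X$ along its connected components and exploiting the disjoint-union structure of $H$. The finiteness claim is essentially a point-set remark: a variety $X$ over $k$ is of finite type, hence has only finitely many irreducible components and a fortiori only finitely many connected components. Label those containing a rational point as $X_{1},\ldots,X_{n}$ and fix representatives $x_{j}\in X_{j}(k)$.

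For the second assertion, fix a connected component $X_{0}$ of $X$ and two rational points $x,x'\in X_{0}(k)$, and let $i\colon X_{0}\hookrightarrow X$ denote the inclusion (well-defined since $X_{0}$ is open-and-closed in $X$). Both $f_{x}$ and $f_{x'}$ factor through $i$ as $i\circ f_{x}^{0}$ and $i\circ f_{x'}^{0}$, so by functoriality of $H$ it suffices to prove $H_{f_{x}^{0}}(1)=H_{f_{x'}^{0}}(1)$ in $H(X_{0},\emptyset,0)$. The identity $!_{X_{0}}\circ f_{x}^{0}=\mathrm{id}_{\textrm{Spec}(k)}$ (and its analogue for $x'$), together with functoriality, yields $H_{!_{X_{0}}}\circ H_{f_{x}^{0}}=\mathrm{id}=H_{!_{X_{0}}}\circ H_{f_{x'}^{0}}$, so both $H_{f_{x}^{0}}$ and $H_{f_{x'}^{0}}$ are sections of $H_{!_{X_{0}}}$. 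Since $X_{0}$ is a connected $k$-variety carrying a rational point it is geometrically connected, and for the usual Weil-type theories this forces $H_{!_{X_{0}}}\colon H(X_{0},\emptyset,0)\to H(\textrm{Spec}(k),\emptyset,0)$ to be an isomorphism; its unique section then coincides with both $H_{f_{x}^{0}}$ and $H_{f_{x'}^{0}}$, so the two values are equal.

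For the linear independence claim I would invoke the direct-sum decomposition $H(X,\emptyset,0)\cong\bigoplus_{j} H(X_{j},\emptyset,0)$ ranging over all the connected components of $X$, with canonical injections induced by the inclusions; this is the standard consequence of excision and exactness discussed in section \ref{sec:someaxioms}. Under this decomposition $H_{f_{x_{j}}}(1)$ lies entirely inside the $j$-th summand because $f_{x_{j}}$ factors through $X_{j}$, and it is non-zero there since $H_{!_{X_{j}}}$ sends it to $1\in H(\textrm{Spec}(k),\emptyset,0)$. Elements lying in distinct direct summands and each non-zero in its own summand are automatically linearly independent over $\mathbb Q$, which concludes the argument.

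The main obstacle is the appeal in the second step to the fact that $H_{!_{X_{0}}}$ is an isomorphism whenever $X_{0}$ is geometrically connected. This is a classical property of the usual Weil-type theories but is not a purely formal consequence of the functoriality, border-naturality, complex and excision axioms laid out in section \ref{sec:someaxioms}; either it should be postulated as an additional degree-zero axiom of the basic theory, or, in the spirit of the discussion preceding the lemma and of Theorem \ref{thm_exacthom}, recovered as a consequence of the homogeneity requirement on the motivic model.
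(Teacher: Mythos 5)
The paper states this lemma without proof (it is followed only by a $\qed$), so there is no authorial argument to compare against; what you have supplied is a genuine filling of a gap the paper leaves implicit. Your argument is correct in all three parts. Finiteness is indeed a finite-type remark. For the second assertion, your factorization through the open-and-closed inclusion $i\colon X_{0}\hookrightarrow X$, the identity $!_{X_{0}}\circ f_{x}^{0}=\mathrm{id}$, and the fact that a connected $k$-variety with a $k$-rational point is geometrically connected (so $H(X_{0},\emptyset,0)\cong H(\textrm{Spec}(k),\emptyset,0)$ for any Weil-type theory, making the section of $H_{!_{X_{0}}}$ unique) is the standard and correct route. For the third assertion, the direct-sum decomposition over all connected components (iterating the binary additivity from section \ref{sec:someaxioms}), combined with $H_{f_{x_{j}}}(1)$ lying in the $j$-th summand and being nonzero because $H_{!_{X_{j}}}$ retracts it to $1$, gives the linear independence over $\mathbb Q$ since distinct nonzero vectors in distinct summands of a $\mathbb Q$-vector space are independent.

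Your closing caveat is accurate and worth keeping in mind: the isomorphism $H(X_{0},\emptyset,0)\cong H(\textrm{Spec}(k),\emptyset,0)$ for geometrically connected $X_{0}$ is a classical normalization/dimension axiom of Weil-type theories and not a formal consequence of the functoriality, border-naturality, complex, and excision axioms of section \ref{sec:someaxioms}. This does not affect the correctness of the lemma as stated, since \emph{for every homology theory} $H$ is quantifying over the actual Weil-type theories in the class $\cal K$, for which the classical fact holds; but it does mean, as you say, that a model of the basic theory $\mathbb I$ need not satisfy the conclusion, and indeed the paper's strategy is precisely to \emph{impose} the consequences of this lemma as the \ac axioms for connected components' rather than derive them formally.
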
\qed

The following axioms, which we call the \emph{axioms for connected components}, are satisfied by every homology theory (cf. Lemma \ref{connectedcomponents}):
\[
(\top \vdash_{[]} \name{p_{f_{x}}}(1)=\name{p_{f_{x'}}}(1))
\]
for any rational points $x, x'$ of $X$ lying in the same connected component of $X$, plus the infinite sequence of geometric axioms asserting the linear independence over $\mathbb Q$ of the values $\name{p_{f_{x_{1}}}}(1), \ldots, \name{p_{f_{x_{d}}}}(1)$ for any set $\{x_{1}, \ldots, x_{d}\}$ of representatives of connected components of $X$ containing a rational point.

\section{The basic theory of presheaf type}\label{sec:basictheorypresheaftype}

\subsection{Definition of the theory $\mathbb I$}\label{sec:theoryI}

In order to implement the strategy described in section \ref{sec:generalstrategy}, we start by investigating the theory $\mathbb I$ obtained by injectifying the theory over an expansion of the signature $\Sigma_{N}$ obtained by adding to all the axioms considered in section \ref{sec:someaxioms} and all the algebraic sequents 
\[
(w_{1}(\vec{x})=0 \wedge \cdots \wedge w_{n}(\vec{x})=0 \vdash_{\vec{x}} w(\vec{x})=0).
\]
which are provable in the theory obtained by adding to the axioms of section \ref{sec:someaxioms} the exactness conditions for all the distinguished pairs in Nori's quiver. 

The theory of homogeneous $\tilde{\mathbb I}$-models, where $\tilde{\mathbb I}$ is a presheaf completion of the theory $\mathbb I$, is not intended to be the final one whose classifying topos is a \ac motivic topos' with the desired properties, but should be regarded as a good \ac approximation' to it.

\begin{lemma}\label{initialobject}
The category $\textrm{f.p.} {\mathbb I}\textrm{-mod}(\Set)$ has an initial object.  
\end{lemma}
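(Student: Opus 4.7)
The plan is to exhibit the initial object explicitly and check its properties by comparing it to a classical homology theory.

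\textbf{Construction of the candidate $I$.} I would define the $\Sigma$-structure $I$ sort by sort: set $\name{0}_I := \mathbb{Q}$ with its canonical prime-field-of-characteristic-zero structure, and for each variety $X$ over $k$ set $\name{(X, \emptyset, 0)}_I$ equal to the $\mathbb{Q}$-vector space with basis indexed by the connected components of $X$ containing a $k$-rational point (of dimension $r(X)$, in the notation of Lemma \ref{connectedcomponents}); set $\name{(X, Y, i)}_I := 0$ on all remaining sorts. The function symbols are interpreted in the only way compatible with the forcing axioms: $\name{p_{f_x}}(1) := [C(x)]$ (the basis vector of the component of $X$ containing $x$), $\name{p_{!_X}}([C]) := 1$, a functoriality map $\name{p_f}$ between two nonzero sorts sends a basis component $[C]$ to $[f(C)]$, and every other function symbol (including every $\name{\delta}$) is the zero map.

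\textbf{Verifying that $I$ is an $\mathbb{I}$-model.} The axioms of section \ref{sec:someaxioms} can be checked directly: functoriality and border naturality are immediate from the definitions; the complex condition for any distinguished pair is trivial because at least one of the three sorts involved is a sort $\name{(X, Y, i)}$ with $Y \neq \emptyset$ and hence zero in $I$; the axioms for connected components hold by construction. For the algebraic Horn sequents provable in the exactness-augmented theory, the idea is to exhibit an injective $\Sigma$-embedding $I \hookrightarrow H$ into a classical $\mathbb{Q}$-valued Weil-type homology theory $H$ (e.g. singular or Betti homology): on $\name{0}$ it is the canonical inclusion $\mathbb{Q} \hookrightarrow H(\textrm{Spec}(k))$; on $\name{(X, \emptyset, 0)}_I$ it sends $[C]$ to the fundamental class of $C$ in $H_0(X)$ (injective by Lemma \ref{connectedcomponents}); on zero sorts it is trivial. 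Since algebraic Horn sequents are preserved by $\Sigma$-substructures and $H$ satisfies the exactness conditions, all such sequents hold in $I$.

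\textbf{Initiality and finite presentability.} For any $\mathbb{I}$-model $M$, the homomorphism $\phi : I \to M$ is forced by $\phi(1) := 1 \in \name{0}_M$ and the functoriality axioms; on $\name{0}_I = \mathbb{Q}$ it is the unique ring morphism into the characteristic-zero field $\name{0}_M$ (hence injective); on $\name{(X, \emptyset, 0)}_I$ it sends $[C]$ to $M(\name{p_{f_x}})(1)$ for any rational point $x \in C$, which is well-defined and yields linearly independent images by the connected-components axioms; on zero sorts it is trivially injective. Thus $\phi$ is the unique homomorphism in the injectivized category, making $I$ initial. Finally, $I$ is $\Sigma$-generated by the single element $1 \in \name{0}_I$, so it is finitely presentable by the coincidence of finite generation and finite presentability for theories of the shape of $\mathbb{I}$ (Propositions 6.11 and 6.13 of \cite{OCPT}, invoked in section \ref{sec:generalstrategy}).

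\textbf{Main obstacle.} The delicate point is the verification that $I$ satisfies the algebraic consequences of exactness, which — as the paper stresses in the discussion of schemes E1, E2, E3 — admit no manageable finite axiomatisation. The $\Sigma$-embedding into a classical Weil-type homology theory is the essential technical lever, reducing the verification to the well-known exactness of a standard model; particular care is needed to confirm $\Sigma$-compatibility of the embedding on every sort, in particular with respect to the boundary maps $\delta$ landing into zero sorts of $I$.
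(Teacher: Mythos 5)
Your proposal follows the same strategy as the paper: explicitly construct the candidate initial object $I$ (coefficient field $\mathbb{Q}$, each $(X,\emptyset,0)$ the free $\mathbb{Q}$-space on the components of $X$ with a rational point, everything else zero) and check initiality via Lemma \ref{connectedcomponents}. You are in fact more careful than the paper at one point: the paper's proof literally stipulates that ``every other vertex and edge of $N_k$'' beyond $p_{f_x}$ and $p_{!_X}$ is interpreted as $0$, which would violate the functoriality axiom (for $g:X\to X'$ and a rational point $x$ of $X$, $p_g(p_{f_x}(1))$ would be $0$ while $p_{f_{g(x)}}(1)$ is a basis vector). Your interpretation of $p_f$ between nonzero sorts as the induced map on connected components is what is actually required. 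You also supply the verification, omitted as ``immediate'' in the paper, that $I$ satisfies the algebraic Horn sequents built into $\mathbb{I}$: embedding $I$ as a $\Sigma$-substructure of a classical Weil-type homology theory is the natural lever, since Horn sequents pass to substructures while exactness itself does not. Finally, your observation that $I$ is globally generated by the single element $1\in\name{0}_I$ is a cleaner justification of finite presentability than the paper's appeal to pointwise finite generation (which, with infinitely many nonzero sorts, does not by itself yield global finite generation). So: same route, with the gaps and the imprecision in the paper's construction correctly identified and repaired.
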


\begin{proof}
Let $I$ be the ${\mathbb I}$-model obtained by interpreting $(\textrm{Spec}(k), \emptyset, 0)$ as $\mathbb Q$, every triplet $(X, \emptyset, 0)$ such that $X$ has a point over $\mathbb Q$ as ${\mathbb Q}^{d}$, where $d$ is the number of connected components of $X$ containing a rational point, $p_{f_{x}}$ as the $\mathbb Q$-linear map ${\mathbb Q}\to {\mathbb Q}^{d}$ sending $1$ to the $d$-uple $(0, \ldots, 0 , 1, 0, \ldots, 0)$, where the $1$ is in the $i$-th position, $i$ being the cardinal of the component of $X$ containing $x$, $p_{!_{X}}$ as the $\mathbb Q$-linear map ${\mathbb Q}^{d}\to {\mathbb Q}$ sending any basis vector $(0, \ldots, 0 , 1, 0, \ldots, 0)$ to $1\in {\mathbb Q}$, and every other vertex and edge of $N_{k}$ as $0$.

It is an immediate consequence of Lemma \ref{connectedcomponents} that $I$ is initial in ${\mathbb I}\textrm{-mod}(\Set)$. The model $I$ actually lies in $\textrm{f.p.} {\mathbb I}\textrm{-mod}(\Set)$ since it is pointwise finitely generated. 
\end{proof}

\begin{remark}\label{rem:sortwisemodels}
For any field $K$ of characteristic zero, we have a model ${\cal I}_{K}$ in ${\mathbb I}\textrm{-mod}(\Set)$ defined by replacing $\mathbb Q$ with $K$ in the definition of the initial object of $\textrm{f.p.} {\mathbb I}\textrm{-mod}(\Set)$ given in the proof of Lemma \ref{initialobject}. Notice that ${\cal I}_{K}$ is finitely generated as a $\mathbb I$-model, that is it lies in $\textrm{f.p.} {\mathbb I}\textrm{-mod}(\Set)$, if and only if $K$ is a finitely generated field.
\end{remark}

We have to prove that the category $\textrm{f.p.} {\mathbb I}\textrm{-mod}(\Set)$ satisfies the AP. This will imply that it also satisfies the JEP (since it has an initial object by Lemma \ref{initialobject}). This will be done by using logical techniques in section \ref{sec_amalgamation}.

\subsection{A presheaf completion for $\mathbb I$}\label{sec:presheafcompletion}

In this section we shall explicitly describe a presheaf completion $\tilde{\mathbb I}$ of the theory $\mathbb I$ introduced above. We define this theory in such a way that every finitely generated model is finitely presented as a model of it.

We shall say that a term $t$ is in a given context $\vec{x}$, and we shall write $t(\vec{x})$, if all the variables occurring in it appear in the context.

\begin{lemma}\label{lem_zeroterms}
Any term $t(\vec{x})$ with source a tuple of variables $\vec{x}$ none of which is of sort $(X, \emptyset, 0)$ or $(X, \textrm{Spec}(k), 0)$ and whose target is $0$ is provably equivalent to $0$ in $\mathbb I$. 
\end{lemma}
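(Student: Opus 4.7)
The proof plan is a structural induction on the term $t$, organized around a simple structural fact about the signature of $\mathbb I$: the only function symbols with codomain $\name{0}=\name{(\textrm{Spec}(k),\emptyset,0)}$ are the field operations on $\name{0}$ itself, the functoriality symbols $\name{p_{!_X}}\colon\name{(X,\emptyset,0)}\to\name{0}$ coming from the unique morphism $X\to\textrm{Spec}(k)$, and the boundary symbols $\name{\delta_{(X,\textrm{Spec}(k),\emptyset,1)}}\colon\name{(X,\textrm{Spec}(k),1)}\to\name{0}$; similarly, the only Nori-quiver arrows into any sort $\name{(X,\emptyset,0)}$ are $\name{p_f}$ for $f\colon X'\to X$ and $\name{\delta_{(X',X,\emptyset,1)}}$. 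Since the forbidden sorts $(X,\emptyset,0)$ and $(X,\textrm{Spec}(k),0)$ never occur in $\vec{x}$ (in particular $\name{0}$ itself does not), any subterm of $t$ whose target sort is $\name{(X,\emptyset,0)}$ is necessarily compound rather than a variable.

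Before the main induction I would record two identities derivable in $\mathbb I$. First, identity (A): for every pair $(X,\textrm{Spec}(k))$ in $N_k$, the sequent $(\top\vdash_{y}\,\delta_{(X,\textrm{Spec}(k),\emptyset,1)}(y)=0)$ is provable. Indeed, the inclusion $f_{\bullet}\colon\textrm{Spec}(k)\hookrightarrow X$ gives $p_{f_{\bullet}}$, and the functoriality axioms yield $p_{!_X}\circ p_{f_{\bullet}}=p_{!_X\circ f_{\bullet}}=p_{\textrm{id}_{\textrm{Spec}(k)}}=\textrm{id}_{\name{0}}$; since $(\delta,p_{f_{\bullet}})$ is a distinguished pair in the long exact sequence of $(X,\textrm{Spec}(k),\emptyset)$, the complex condition gives $p_{f_{\bullet}}(\delta(y))=0$, and applying $p_{!_X}$ to both sides yields $\delta(y)=0$. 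Second, identity (B): for $X'\subseteq X$ the sequent $(\top\vdash_{u}\,p_{!_{X'}}(\delta_{(X,X',\emptyset,1)}(u))=0)$ is provable, by factoring $p_{!_{X'}}=p_{!_X}\circ p_j$ for $j\colon X'\hookrightarrow X$ and invoking the complex condition $p_j\circ\delta_{(X,X',\emptyset,1)}=0$.

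The induction then establishes simultaneously: \textbf{(I)} every term $t(\vec{x})$ with target $\name{0}$ in the allowed context is provably zero; \textbf{(II)} every term $s(\vec{x})$ with target $\name{(X,\emptyset,0)}$ in the allowed context satisfies $\mathbb I\vdash p_{!_X}(s)=0$. For (I), the outermost symbol of $t$ is either a field operation on $\name{0}$ (linearity plus the IH on strictly smaller subterms of sort $\name{0}$ closes the case), an application $p_{!_X}(s)$ (apply (II)), or a boundary $\delta_{(X,\textrm{Spec}(k),\emptyset,1)}(u)$ (apply (A)). For (II), since $\name{(X,\emptyset,0)}$ is a forbidden variable sort, $s$ is either the constant $0$ (trivial), an additive or scalar combination (use linearity of $p_{!_X}$ and the IH), an application $p_f(u)$ of a further functoriality symbol with $u$ of sort $\name{(X',\emptyset,0)}$ (apply the functoriality identity $p_{!_X}\circ p_f=p_{!_{X'}}$ and recurse on $u$), or a boundary $\delta_{(X',X,\emptyset,1)}(u)$ (apply (B)).

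The main obstacle I anticipate is interpretive rather than computational: the non-zero multiplicative unit $1\in\name{0}$ coming from the field structure is a closed term of target $\name{0}$ which is patently not provably zero in $\mathbb I$. The lemma must therefore be read as implicitly restricting to terms that depend essentially on the variables of $\vec{x}$ (equivalently, to terms considered modulo the sub-algebra of closed terms generated by $1$), which is the reading compatible with the subsequent use of the lemma in describing the presheaf completion $\tilde{\mathbb I}$ and in making every finitely generated $\tilde{\mathbb I}$-model finitely presented. Once this interpretive point is fixed, the double induction above closes cleanly, using only the functoriality and complex-condition axioms already built into $\mathbb I$.
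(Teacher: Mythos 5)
Your proof is correct in its essentials and takes a genuinely different route from the paper's. The paper argues by induction on the degree $n$ of the sorts of the variables occurring in the term: border naturality is used to rewrite any term involving a degree-$n$ variable (with $n\ge 1$) as a sum of terms of the form $t'\circ\delta$ with $t'$ involving only lower degrees, and the base case $n=1$ is handled via the factorization $!_Y=!_X\circ u$ together with the complex condition. You instead run a mutual structural induction on the shape of the term, supported by identities (A) and (B), and border naturality plays no role. Identity (A) in particular records a sharper fact than anything the paper states explicitly: the boundary $\delta_{(X,\textrm{Spec}(k),\emptyset,1)}$ is itself provably the zero arrow. Your reorganization around statement (II), namely $p_{!_X}(s)=0$ for every $s$ of target $(X,\emptyset,0)$ in the allowed context, also makes the case analysis fully explicit where the paper's normalization step is written quite tersely; the cost is that you must keep (I) and (II) going simultaneously, but the recursion is clearly well-founded.

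The interpretive obstacle you flag is real and is not addressed by the paper's own proof either: a closed term such as $1$ has target $\name{0}$ and is certainly not provably zero, so the lemma as literally stated fails unless one restricts to terms that essentially use a variable of $\vec{x}$, or, more conveniently for the induction, reformulates the conclusion as ``$t(\vec{x})$ is provably equal to a closed term.'' The latter is what the application (finite generation of $A_0$ over $\mathbb Q$) actually needs, and with this reformulation your part (I) closes since field operations preserve closedness, while your part (II) needs no adjustment since $0$ is the only closed constant at a sort $(X,\emptyset,0)$. One further case that you and the paper both leave implicit: the signature of $\mathbb I$ also names the inverses of the excision isomorphisms $H(X\setminus Z,Y\setminus Z,i)\to H(X,Y,i)$, and when $Z=Y$ is clopen these can introduce additional function symbols into $\name{0}$ or into sorts $(X,\emptyset,0)$ that are not in your case list; morally these source sorts $(X,Y,0)$ are isomorphic copies of forbidden sorts and should be added to the hypothesis, but the point deserves a line in a fully rigorous version.
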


\begin{proof}
We shall prove that all the terms $t(\vec{x})$ with target $0$ whose variables are all of sorts $(X, Y, n)$ for some $n\geq 1$ are provably equivalent to $0$ by induction on $n$. Notice that it is not possible that all the variables in $t$ are of sorts $(X, Y, 0)$ with $Y\neq \textrm{Spec}(k)$ and $Y\neq \emptyset$, since there is an edge in Nori's diagram $(X,Y,0)\to (X, \textrm{Spec}(k), 0)$ only if $Y\subseteq \textrm{Spec}(k)$. The induction step works as follows: by the naturality of the border we can suppose any term $t$ with a variable of sort $(X, Y, n)$ for $n\geq 1$ to be of the form \ac sums of terms of the form $t' \circ \delta$' where $t'$ involves only variables of sorts $(X, Y, l)$ with $l\lt n$. For $n=1$, it clearly suffices to prove the thesis under the assumption that $Y\neq \emptyset$, since one can always map by functoriality $(X, \emptyset, n)$ to $(X, X, n)$. Now, if $Y\neq \emptyset$ then by border naturality we can suppose that $t=t'\circ \delta$, where $\delta$ is the border $(X, Y, 1)\to (Y, \emptyset, 0)$ and $t$ is the arrow $!_{Y}:(Y, \emptyset, 0)\to (\textrm{Spec}(k), \emptyset, 0)$. But $!_{Y}=!_{X}\circ u$, where $u$ is the functoriality map $(Y, \emptyset, 0)\to (X, \emptyset, 0)$, whence $t'\circ \delta=!_{X}\circ u \circ \delta$. Now, by the complex condition we have $u\circ \delta=0$, whence $t=0$, as required.
\end{proof}

We add to the signature of $\mathbb I$ a predicate $R_{S}(\vec{x})$, where $S$ is a set of terms in the context $\vec{x}$ (with arbitrary target), and the geometric disjunctive axioms ensuring that the interpretation of this predicate in any model is the following: $R_{S}(\vec{x})$ if and only if for every term $t\in S$, $t(\vec{x})=0$ and for every term $t\notin S$, $t(\vec{x})\neq 0$.

We also add the \emph{Diers predicates} (in the sense of \cite{Diers}), that is the predicates $T_{n}(x_{1}, \ldots, x_{n})$ (for $n\geq 1$) whose meaning is the following: $T_{n}(x_{1}, \ldots, x_{n})$ if and only if $x_{n}$ is transcendental over the field ${\mathbb Q}(x_{1}, \ldots, x_{n-1})$. To ensure this meaning, we add the axioms specified in section 8.2 of \cite{OCPT}.

Notice that it is not unnatural to add these predicates and axioms to the theory $\mathbb I$ in order to obtain a theory of presheaf type. Indeed, the properties defined by these predicates are preserved by homomorphisms and filtered colimits of $\mathbb I$-models whence they are definable by a geometric formula in every presheaf completion of $\mathbb I$ by the definability theorem for theories of presheaf type (cf. Theorem 2.2 \cite{OC6}). 

Let us denote by $\mathbb J$ the theory obtained in this way.

Let us show that every finitely generated $\mathbb I$-model $(A, \vec{b})$ (where $A$ is the model and $\vec{b}$ are the generators) is finitely presented as a $\mathbb J$-model. For each context $\vec{x}$, let us define $S_{(A, \vec{b})}$ as the set of terms $t(\vec{x})$, where $\vec{x}$ is the context corresponding to the generators $\vec{b}$, such that $t(\vec{b})=0$ in $A$. 

By Lemma \ref{lem_zeroterms}, the field of coefficients $A_{0}$ of $(A, \vec{b})$ is finitely generated over $\mathbb Q$. Notice that, since $A$ is generated by $\vec{b}$, the elements of $A_{0}$ are precisely those of the form $t(\vec{b})$ for a term $t$ in the context $\vec{x}$ with target $0$. More specifically, we can suppose the context $\vec{x}$ to consist only of variables of sort $(X, \emptyset, 0)$ for some $X$, and the term $t$ to be a field combination of terms of the form $!_{X}(x)$ for some $X$. The field $A_{0}$ is therefore generated by the elements of the form $!_{X}(u)$ for some $u$ in $\vec{b}$.

In order to intrinsically characterize the field $A_{0}$ as a field extension of $\mathbb Q$ one has to know how the elements of the form $!_{X}(u)$ stand in relationship to one another in terms of algebraic (in)dependence relations. The Diers predicates serve for this purpose. Let $U_{(A, \vec{b})}$ be a formula in the context $\vec{x}$ written by using Diers' predicates which expresses these relations (there are clearly different such formulae $U_{(A, \vec{b})}$, but all of them are isomorphic in the syntactic category since they present the same model). Then the model $(A, \vec{b})$ is presented by the formula 
\[
\{\vec{x}. R_{S_{(A, \vec{b})}}(\vec{x}) \wedge U_{(A, \vec{b})}(\vec{x}) \}.
\]

For each non-empty context $\vec{x}$, let us denote by ${\cal U}_{\vec{x}}$ the collection of sets of the form $S_{(A, \vec{b})}$ for $(A, \vec{b})$ a finitely generated $\mathbb I$-model and $\vec{b}$ a tuple of type $\vec{x}$. Let us add to the theory $\mathbb J$ the following axioms:
\[
(\top \vdash_{\vec{x}}   \mathbin{\mathop{\textrm{ $\bigvee$}}\limits_{S\in {\cal U}_{\vec{x}}}} R_{S}(\vec{x})).
\]

By the results in section 7.2 of \cite{OCPT}, the resulting theory $\tilde{\mathbb I}$ is of presheaf type, and a presheaf completion of the theory $\mathbb I$.

The problem with this axiomatization of $\tilde{\mathbb I}$ is that it is not intrinsic, in the sense that the sets ${\cal U}_{\vec{x}}$ appearing in the axioms are defined in terms of the finitely generated $\mathbb I$-models. We would like to find an intrinsic characterization for them, or, in other words, to intrinsically characterize the $\tilde{\mathbb I}$-irreducible formulae, i.e. the formulae which present a $\tilde{\mathbb I}$-model. To this end, we investigate the problem of reconstructing a finitely generated $\mathbb I$-model from the corresponding formula presenting it. Given a set of terms $S$ in the context $\vec{x}=(x_{1}^{c_{1}}, \ldots, x_{m}^{c_{m}})$ (with arbitrary target), for whatever formula $U$ of the form $G(!_{X_{1}}(x_{i_{1}}) \slash z_{1}, \ldots, !_{X_{n}}(x_{i_{n}}) \slash z_{n})$ where the $x_{i_{j}}$ are precisely the variables among the ones in $\vec{x}$ whose sort is of the form $(X, \emptyset, 0)$ for some $X$, and the sort of $x_{i_{j}}$ is $(X_{i}, \emptyset, 0)$ for each $j=1, \ldots, n$ and $G(z_{1},\ldots, z_{n})$ is a formula which expresses the relations of algebraic (in)dependence of the elements $z_{1}, \ldots, z_{n}$ with respect to one another, we can try to build a (finitely generated) $\mathbb I$-model presented by the formula $\{\vec{x}. R_{S} \wedge U \}$. We have to impose some conditions on $S$ for it to be of the form $S_{(A, \vec{b})}$ for some $(A, \vec{b})$. For instance, any $S$ of the form $S_{(A, \vec{b})}$ enjoys the property that the sequent
 \[
(R_{S}(\vec{x}) \wedge U(\vec{x}) \vdash_{\vec{x}} \bot)
\]   
is \emph{not} provable in $\tilde{{\mathbb I}}$; indeed, it is satisfied by $(A, \vec{b})$. Moreover, $S_{(A, \vec{b})}$ is closed under the $A_{0}$-vector spaces operations, and satisfies the additional closure property: if the sequent 
\[
(R_{S}(\vec{x}) \wedge U(\vec{x}) \vdash_{\vec{x}} s(\vec{x})=0)
\]   
is provable in $\tilde{{\mathbb I}}$ then $s\in S$. In fact, this property follows from the previous one. Indeed, if $s\notin S$ then by definition of $R_{S}$, the sequent   
\[
(R_{S}(\vec{x}) \wedge U(\vec{x}) \vdash_{\vec{x}} s(\vec{x})\neq 0)
\]
is provable in $\tilde{{\mathbb I}}$ and hence the fact that 
\[
(R_{S}(\vec{x}) \wedge U(\vec{x}) \vdash_{\vec{x}} s(\vec{x})=0)
\]
is provable in $\tilde{{\mathbb I}}$ implies that the sequent
\[
(R_{S}(\vec{x}) \wedge U(\vec{x}) \vdash_{\vec{x}} \bot)
\] 
is provable in $\tilde{{\mathbb I}}$. 

The following proposition characterizes in intrinsic terms the formulae which present the $\mathbb I$-models in the theory $\tilde{{\mathbb I}}$.

\begin{proposition}\label{prop_irreducibleformulae}
Given a set $S$ of terms in the context $\vec{x}=(x_{1}^{c_{1}}, \ldots, x_{m}^{c_{m}})$ (with arbitrary target) and a formula $U=G(!_{X_{1}}(x_{i_{1}}) \slash z_{1}, \ldots, !_{X_{n}}(x_{i_{n}}) \slash z_{n})$ as specified above, the formula $\{\vec{x}.R_{S}(\vec{x}) \wedge U(\vec{x})\}$ presents a $\tilde{\mathbb I}$-model if and only if any of the following two equivalent conditions is satisfied:
\begin{enumerate}[(i)]
\item The sequent
\[
(R_{S}(\vec{x}) \wedge U(\vec{x}) \vdash_{\vec{x}} \bot)
\] 
is not provable in $\tilde{{\mathbb I}}$;

\item For any sequent
\[
(U(\vec{x}) \wedge w_{1}(\vec{x})=0\wedge \cdots \wedge w_{n}(\vec{x})=0 \vdash_{x} w(\vec{x})=0)
\] 
which is provable in $\tilde{\mathbb I}$, if $w_{1}, \ldots, w_{n}\in S$ then $w\in S$.
\end{enumerate}
\end{proposition}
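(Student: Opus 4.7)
The plan is to establish the three-way equivalence among (a) $\{\vec{x}.R_S(\vec{x})\wedge U(\vec{x})\}$ presenting a $\tilde{\mathbb I}$-model, condition (i), and condition (ii). The implications (a)$\Rightarrow$(i) and (a)$\Rightarrow$(ii) are the easy ones. If the formula presents a model $M$ with generators $\vec{\xi}$, then $M\vDash R_S(\vec{\xi})\wedge U(\vec{\xi})$ by the universal property, so by soundness the sequent $(R_S\wedge U\vdash_{\vec{x}}\bot)$ cannot be provable, giving (i). For (ii), Lemma \ref{lem_fp} (or just soundness) makes any provable sequent $(U\wedge w_1=0\wedge\cdots\wedge w_n=0\vdash w=0)$ valid in $M$ at $\vec{\xi}$; since $R_S(\vec{\xi})\vdash w_i(\vec{\xi})=0$ for $w_i\in S$ we deduce $M\vDash w(\vec{\xi})=0$, and the negative part of the disjunctive axioms defining $R_S$ (which forces $R_S(\vec{\xi})\wedge w(\vec{\xi})=0\vdash\bot$ for any $w\notin S$) then compels $w\in S$.

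For (i)$\Leftrightarrow$(ii): suppose (ii) fails with witnesses $w_1,\ldots,w_n\in S$ and $w\notin S$. The positive axioms $R_S(\vec{x})\vdash w_i(\vec{x})=0$ together with the disjointness $R_S(\vec{x})\wedge w(\vec{x})=0\vdash\bot$---enforced by the disjunctive axiom $\top\vdash\bigvee_{S'}R_{S'}$ together with the requirement that each $R_{S'}$ specify exactly its own vanishing locus---chain with the sequent witnessing the failure of (ii) to give $(R_S\wedge U\vdash_{\vec{x}}\bot)$, so (i) fails; this shows (i)$\Rightarrow$(ii). The converse will follow automatically from the model construction below, via soundness.

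The main step is (ii)$\Rightarrow$(a). Given $S$ satisfying (ii), I would construct a finitely generated $\mathbb I$-model $(A,\vec{b})$ with $S=S_{(A,\vec{b})}$ and whose algebraic-independence data among the values $!_{X_j}(b_{i_j})$ match $U$, so that by the construction of $\tilde{\mathbb I}$ in section \ref{sec:presheafcompletion} the formula presents $(A,\vec{b})$. The natural candidate is the quotient construction: for each sort $B$, take $A_B$ to be the set of terms $t(\vec{x})$ of sort $B$ modulo the relation $t\sim t'$ iff $t-t'\in S$, with function symbols acting on classes via representatives and the field structure on $A_0$ fixed by $U$ via the Diers predicates. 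The tuple $\vec{b}$ consists of the classes of the variables $\vec{x}$ themselves; the universal property follows because any $\tilde{\mathbb I}$-model $N$ equipped with $\vec{c}$ satisfying $R_S(\vec{c})\wedge U(\vec{c})$ receives the homomorphism $[t]\mapsto t(\vec{c})$, well-defined because $t\sim t'$ means $t-t'\in S$, so $(t-t')(\vec{c})=0$ in $N$ by the positive axioms of $R_S$.

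The main obstacle will be verifying that this quotient genuinely satisfies all the axioms of $\mathbb I$. Since $\mathbb I$ is not algebraic---it contains the complex condition, the border-naturality and connected-components axioms, the injectivization axioms, and all the algebraic sequents derivable from exactness for distinguished pairs (e.g.\ the schemes E1, E2, E3 of section \ref{sec:generalstrategy})---one must show that each such conditional axiom descends to the quotient. This reduces to checking that the closure property (ii) forces $S$ to contain every term that must vanish as an immediate or derived consequence of the axioms of $\tilde{\mathbb I}$ together with $U$, and not merely those given by linear-algebraic closure; the formulation of (ii) in terms of arbitrary provable sequents in $\tilde{\mathbb I}$ is precisely what provides the required leverage. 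A secondary subtlety is to verify that the Diers-predicate interpretation on $A_0$ matches $U$, which uses the explicit axiomatization of the Diers predicates recalled from section 8.2 of \cite{OCPT}.
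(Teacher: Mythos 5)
Your circle of implications is legitimate in principle, and the quotient idea for the hard direction is close in spirit to what the paper does, but the decomposition is genuinely different and hides a gap. The paper constructs the model $A_{(S,U)}$ \emph{from condition (i)}, defining $E(c)$ by provability of sequents of the form $(R_S(\vec x)\wedge U(\vec x)\vdash \cdots =0)$ --- so that the full type $R_S$, \emph{including its negative clauses} $t(\vec x)\neq 0$ for $t\notin S$, feeds into the quotient --- and (i) is used precisely to ensure that $t\in E(c)$ forces $t\in S$. The paper then proves (i)$\Leftrightarrow$(ii) by a separate compactness/coherence argument (moving the negatives of $R_S$ to the conclusion, invoking that $\mathbb I$ is coherent and reducing to a finite cartesian sequent via Theorem 3.5 of \cite{OC7}). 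You instead want to construct the model directly from (ii), using the equivalence $t\sim t'$ iff $t-t'\in S$, so that (ii)$\Rightarrow$(a)$\Rightarrow$(i) comes for free.

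The problem is that condition (ii) alone is not enough for your quotient to be a $\tilde{\mathbb I}$-model. Condition (ii) only constrains $S$ through sequents with an equational conclusion, and therefore gives you the Horn-type closure needed for the algebraic axioms of $\mathbb I$; but $\mathbb I$ also contains \emph{negative} axioms --- characteristic zero $(p.1=0\vdash\bot)$, the injectivization axioms, and the linear-independence clauses in the axioms for connected components, all with conclusion $\bot$ or involving $\neq$. Nothing in (ii) prevents, say, $1\in S$: if $1\in S$ then ex falso every provable $(U\wedge 1=0\vdash w=0)$ forces $w\in S$, so $S$ becomes the set of all terms, (ii) still holds vacuously, yet your quotient collapses and is not a field, while $(R_S\wedge U\vdash\bot)$ \emph{is} provable, so (a) and (i) both fail. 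In other words, (ii)$\Rightarrow$(a) is exactly where (ii)$\Rightarrow$(i) has to be proved, and deferring that to ``the model construction below, via soundness'' is circular unless you first supply the compactness argument (or otherwise import the negative information in $R_S$ into the quotient as the paper does). Your paragraph flagging ``the main obstacle'' correctly identifies the need to propagate the $\tilde{\mathbb I}$-provable equational consequences, but it does not address the inequality constraints, which is where the real difficulty lies.
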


\begin{proof}
Given $(S, U)$ satisfying the hypotheses of the proposition and condition $(i)$, consider the structure $A_{(S, U)}$ defined as follows. 

We set $A_{(S, U)}(0)$ equal to the field extension $K_{0}$ of $\mathbb Q$ with elements $\vec{\alpha}=(\alpha_{1}, \ldots, \alpha_{n})$ satisfying the formula $G$ from which the formula $U$ is built as $G(!_{X_{1}}(x_{i_{1}}) \slash z_{1}, \ldots, !_{X_{n}}(x_{i_{n}}) \slash z_{n})$.

For each $c\neq 0$,  we set 
\[
A_{(S, U)}(c)= (\mathbin{\mathop{\textrm{ $\bigoplus$}}\limits_{t:(c_{1}, \ldots, c_{m})\to c}} K_{0}1_{t})\slash E(c),
\]
where $E(c)$ is the subspace consisting of all the elements $k_{t_{1}}1_{t_{1}}+ \cdots +k_{t_m}1_{t_{m}}$ such that there exist rational functions $F_{1}, \ldots, F_{m}$ such that $k_{t_{i}}=F_{i}(\vec{\alpha})$ for all $i$ and the sequent
\[
(R_{S}(\vec{x}) \wedge U(\vec{x}) \vdash_{\vec{x}} F_{1}(s_{1}(\vec{x}), \ldots, s_{n}(\vec{x})) t_{1}(\vec{x})+ \cdots+ F_{m}(s_{1}(\vec{x}), \ldots, s_{n}(\vec{x})) t_{m}(\vec{x})=0)
\]  
is provable in $\tilde{\mathbb I}$, where the terms $s_{1},\ldots, s_{n}$ are precisely those of the form $!_{X}(x)$ for $x$ a variable inside the context $\vec{x}$ of sort  $(X, \emptyset, 0)$ for some $X$.

Notice in passing that 
\[
\mathbin{\mathop{\textrm{ $\bigoplus$}}\limits_{t:(c_{1}, \ldots, c_{m})\to c}} K_{0}1_{t}= \mathbin{\mathop{\textrm{ $\bigoplus$}}\limits_{t_{1}:c_{1}\to c}} K_{0}1_{t_{1}} \oplus \cdots \oplus  \mathbin{\mathop{\textrm{ $\bigoplus$}}\limits_{t:c_{m}\to c}} K_{0}1_{t_{m}}
\]
(recall that we take terms-in-context everywhere).

The definition of $A_{(S, U)}$ on edges of Nori's diagram with target different from $0$ is straightforward. The only non-obvious definition that we have to give is that for an edge $h:c\to 0$. For this, it clearly suffices to specify where the generators $1_{t}$ go. For any $t$, the term $h\circ t$ has target $0$, so it is a field combination $F(\ldots, !_{X}(x), \ldots)$ of terms of the form $!_{X}(x)$ for some $X$ and $x$ in $\vec{x}$. We stipulate that $1_{t}$ is sent to the element $F(\vec{\alpha})$ of $K_{0}$. We have to check that this is well-defined, i.e. that for any rational function $F'$ which is equivalent in the theory of (Diers) fields to $F$, $F(\vec{\alpha})=F'(\vec{\alpha})$. But this follows at once from the fact that $K_{0}$ is a (Diers) field.

Let us now prove that $A_{(S, U)}$ is a model of $\tilde{{\mathbb I}}$ presented by the formula $\{\vec{x}. R_{S} \wedge U \}$. First, we have to check that it is a model of $\tilde{{\mathbb I}}$, equivalently of $\mathbb I$ (notice that the category of $\mathbb I$-models is equivalent to the ind-completion of the category of finitely generated $\mathbb I$-models). To this end, it is essential to establish the independence from the rational functions $F_{i}$ in the definition of the subspace $E(c)$. This follows from the fact that, by definition of $K_{0}$, for any $F$ and $G$ such that $F(\vec{\alpha})=G(\vec{\alpha})$ in $K_{0}$, the sequent
\[
(U(\vec{x}) \vdash_{\vec{x}} F(s_{1}(\vec{x}), \ldots, s_{n}(\vec{x}))=G(s_{1}(\vec{x}), \ldots, s_{n}(\vec{x})))
\]
is provable in $\tilde{\mathbb I}$. 

Therefore, we have that $F_{1}(\vec{\alpha})1_{t_{1}}+ \cdots +F_{m}(\vec{\alpha})1_{t_{m}}\in E(c)$ \emph{if and only if} the sequent
\[
(R_{S}(\vec{x}) \wedge U(\vec{x}) \vdash_{\vec{x}} F_{1}(s_{1}(\vec{x}), \ldots, s_{n}(\vec{x})) t_{1}(\vec{x})+ \cdots+ F_{m}(s_{1}(\vec{x}), \ldots, s_{n}(\vec{x})) t_{m}(\vec{x})=0)
\]  
is provable in $\tilde{\mathbb I}$.

For an element $u=F_{1}(\vec{\alpha})1_{t_{1}}+ \cdots +F_{m}(\vec{\alpha})1_{t_{m}}$ of $\mathbin{\mathop{\textrm{ $\bigoplus$}}\limits_{t:(c_{1}, \ldots, c_{m})\to c}} K_{0}1_{t}$, let us denote by $\tilde{u}_{\vec{x}}$ the term $F_{1}(s_{1}(\vec{x}), \ldots, s_{n}(\vec{x})) t_{1}(\vec{x})+ \cdots+ F_{m}(s_{1}(\vec{x}), \ldots, s_{n}(\vec{x})) t_{m}(\vec{x})$. It is immediate to see, by using the characterization of the elements of $E(c)$ just established, that the assignment $u \to \tilde{u}_{\vec{x}}$ respects the term construction up to provable equivalence in $\tilde{\mathbb I}$. This clearly implies that $A_{(S, U)}$ is a model of $\mathbb I$. 

To conclude the proof that $A_{(S, U)}$ is presented by the formula $\{\vec{x}. R_{S}(\vec{x}) \wedge U(\vec{x})\}$, it clearly suffices to check that the generators satisfy the predicate $R_{S}$. This amounts to verifying that for any term $t$, $t\in E(c)$ if and only if $t\in S$. The \ac if' direction is trivial, and the \ac only if' one follows from condition (i) of the proposition. Using this remark, it is immediate to see that one can recover the formula from the model presented by it in such a way that the assignments $(A, \vec{b})\to \{\vec{x}. R_{S_{(A, \vec{b})}} \wedge U_{(A, \vec{b})} \}$ and $(S, U)\to A_{(S, U)}$ are inverses to each other (up to isomorphism) under the hypothesis that $(S, U)$ satisfies the conditions of the proposition. 

It remains to check that conditions (i) and (ii) of the Proposition are equivalent. The fact that (i) implies (ii) follows immediately from the definition of $R_{S}$. Conversely, let us assume condition (ii) and deduce condition (i). Suppose that the sequent 
\[
(R_{S}(\vec{x}) \wedge U(\vec{x}) \vdash_{\vec{x}} \bot)
\] 
is provable in $\tilde{{\mathbb I}}$. By recalling the definition of $R_{S}$ and $U$ and the fact that in classical logic a sequent $(A \wedge \neg B \vdash C)$ is equivalent to the sequent $(A \vdash B\vee C)$, we obtain that the sequent
\[
(R_{S}(\vec{x}) \wedge U(\vec{x}) \vdash_{\vec{x}} \bot)
\]
is equivalent to a sequent whose premise is a finite conjunction of polynomial conditions and whose conclusion is an infinitary disjunction of conditions of the form $t(\vec{x})=0$ or $t(\vec{x})\neq 0$ or polynomial conditions. Now, this sequent is written over the signature of the theory $\mathbb I$, which is coherent, and has as premise a coherent formula. Since $\mathbb I$ and $\tilde{\mathbb I}$ have the same set-based models, this sequent is valid in all set-based $\mathbb I$-models and hence, assuming the axiom of choice, it is provable in $\mathbb I$. But $\mathbb I$ is coherent, so the provability of this sequent entails that of a sequent having the same premise and as consequence a finite subdisjunction of the disjunction appearing in the conclusion of the former sequent (cf. Theorem 3.5 \cite{OC7}). Therefore a sequent whose premise is a finite conjunction of formulae of the form $P(\vec{x})=0$ (where $P$ is a polynomial) or $t(\vec{x})=0$, for $t\in S$, and whose conclusion is a finite disjunction of formulae of the form $P(\vec{x})=0$ or $s(\vec{x})=0$, where $s\notin S$, is provable in $\mathbb I$. But this sequent is written over the signature of the cartesianization of $\mathbb I$, whence the disjunction in the conclusion can be supposed to be a singleton since the formula in the premise is cartesian. Therefore a sequent of the form $(U(\vec{x}) \wedge t_{1}(\vec{x}) \wedge \cdots \wedge t_{n}(\vec{x}) \vdash_{\vec{x}} s(\vec{x})=0)$ is provable in $\tilde{\mathbb I}$, where $t_{1}, \ldots, t_{n}\in S$ and $s\notin S$. But our hypothesis implies that $s\in S$, which gives a contradiction.
\end{proof}

\subsection{The homomorphisms of finitely generated $\mathbb I$-models}\label{sec_homomorphisms}

In this section, we shall obtain a characterization of the homomorphisms of finitely generated $\mathbb I$-models based on the syntactic description of the formulae which present them provided by Proposition \ref{prop_irreducibleformulae}. Given two finitely generated $\mathbb I$-models $(A, \vec{b})$ and $(A', \vec{b}')$, if $(A', \vec{b}')$ (resp. $(A, \vec{b})$ ) is presented by a formula $\{\vec{y}. R_{T}(\vec{y}) \wedge V(\vec{y})\}$ (resp. by a formula $\{\vec{x}. R_{S}(\vec{x}) \wedge U(\vec{x})\}$) then the $\tilde{\mathbb I}$-model homomorphisms $(A', \vec{b}')\to (A, \vec{b})$ correspond bijectively to the (equivalence classes of) $\tilde{\mathbb I}$-provably functional formulae $\theta(\vec{x}, \vec{y}):\{\vec{x}. R_{S}(\vec{x}) \wedge U(\vec{x})\} \to \{\vec{y}. R_{T}(\vec{y}) \wedge V(\vec{y})\}$, via the assignment sending $\theta$ to the homomorphism sending the generators $\vec{b'}$ to the tuple $[[\theta]]_{(A, \vec{b})}(\vec{b})$. Since $(A, \vec{b})$ is finitely generated, we can suppose without loss of generality $\theta$ to be of the form $\vec{y}=\vec{w}(\vec{x})$, where $\vec{w}$ is a tuple of terms in the context $\vec{x}$.  

\begin{proposition}\label{prop_arrows}
With the above notation, given an arrow $\vec{y}=\vec{w}(\vec{x}):\{\vec{x}. R_{S}(\vec{x}) \wedge U(\vec{x})\} \to \{\vec{y}. R_{T}(\vec{y}) \wedge V(\vec{y})\}$ in the syntactic category of the theory $\tilde{\mathbb I}$, $(T, V)$ is uniquely determined by $(S, U)$ and $\vec{w}$, as follows: 
\begin{enumerate}[(i)]
\item $T=\{t \mid \textrm{the sequent } (R_{S}(\vec{x}) \wedge U(\vec{x}) \vdash_{\vec{x}} t(\vec{w}(\vec{x}))=0) \textrm{ is provable in } \tilde{{\mathbb I}} \}=\{t \mid t\circ \vec{w}\in S\}$;

\item $V(\vec{y})$ is the (finite) conjunction of the Diers conditions $D(\vec{y})$ such that the sequent $(R_{S}(\vec{x}) \wedge U(\vec{x}) \vdash_{\vec{x}} D(\vec{w}(\vec{x})))$ is provable in $\tilde{{\mathbb I}}$.  
\end{enumerate} 
\end{proposition}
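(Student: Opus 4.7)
The strategy is to unwind the definition of a morphism in the syntactic category of $\tilde{\mathbb I}$. By assumption the formula $\vec{y}=\vec{w}(\vec{x})$ is $\tilde{\mathbb I}$-provably functional from $\{\vec{x}. R_{S}(\vec{x}) \wedge U(\vec{x})\}$ to $\{\vec{y}. R_{T}(\vec{y}) \wedge V(\vec{y})\}$. Since $\vec{w}$ is a tuple of terms, the existence/uniqueness clauses in the definition of a provably functional formula are automatic, and the content of the arrow reduces to the provability in $\tilde{\mathbb I}$ of the sequent
\[
(R_{S}(\vec{x}) \wedge U(\vec{x}) \vdash_{\vec{x}} R_{T}(\vec{w}(\vec{x})) \wedge V(\vec{w}(\vec{x}))).
\]
The plan is to analyse the two conjuncts on the right-hand side separately, using the defining disjunctive axioms for $R_{T}$ and the role of Diers predicates established in Section \ref{sec:presheafcompletion}.

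For part (i), by the defining axioms of $R_{T}$, the formula $R_{T}(\vec{w}(\vec{x}))$ asserts that $t(\vec{w}(\vec{x}))=0$ for every $t\in T$ and $t(\vec{w}(\vec{x}))\neq 0$ for every $t\notin T$. For this to be provable from $R_{S}(\vec{x}) \wedge U(\vec{x})$ in $\tilde{\mathbb I}$, $T$ must on the one hand contain only terms $t$ for which $(R_{S}\wedge U \vdash_{\vec{x}} t(\vec{w}(\vec{x}))=0)$ is provable, and on the other hand contain \emph{every} such term, since otherwise the non-vanishing clause for $t\notin T$ would contradict a vanishing derivation. This forces the first equality in (i). For the second equality, I would apply Lemma \ref{lem_fp} to the formula $\{\vec{x}. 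R_{S}(\vec{x}) \wedge U(\vec{x})\}$, which by hypothesis presents the $\mathbb I$-model $(A,\vec{b})$ with $S=S_{(A,\vec{b})}$: the sequent $(R_{S}\wedge U \vdash_{\vec{x}} (t\circ \vec{w})(\vec{x})=0)$ is provable in $\tilde{\mathbb I}$ if and only if $(t\circ \vec{w})(\vec{b})=0$ holds in $A$, which is precisely the condition $t\circ \vec{w}\in S$.

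For part (ii), a parallel argument works. The conjunct $V(\vec{w}(\vec{x}))$ must be provable from $R_{S}(\vec{x}) \wedge U(\vec{x})$ in $\tilde{\mathbb I}$; since $V$ is a finite conjunction of Diers predicates (each applied to the subtuple of $\vec{y}$-variables of sorts of the form $(X,\emptyset,0)$), each conjunct $D(\vec{w}(\vec{x}))$ is individually derivable, so $V$ is included in the conjunction described in (ii). Conversely, by Proposition \ref{prop_irreducibleformulae}, the formula $V$ is, up to provable equivalence, the complete Diers-style description of the algebraic (in)dependence relations among the elements $!_{X_{i_{j}}}(w_{i_{j}}(\vec{b}))$ in the field $A_{0}$; any additional Diers condition derivable after substitution would, again by Lemma \ref{lem_fp}, be satisfied by these elements in $A_{0}$ and hence already be a consequence of $V$, so the two conjunctions agree up to provable equivalence.

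The main obstacle, I expect, will be dispatching (ii) cleanly: one must verify that the Diers conjunction appearing in the presenting formula of the image really does exhaust \emph{all} derivable Diers conditions on $\vec{w}(\vec{x})$, rather than merely a sufficient finite set. This hinges on the intrinsic characterization of presenting formulae given in Proposition \ref{prop_irreducibleformulae}, together with the fact that the Diers axioms added in defining $\tilde{\mathbb I}$ completely control finitely generated field extensions of $\mathbb Q$. Once this is in place, the uniqueness of the pair $(T,V)$ given $(S,U)$ and $\vec{w}$ is immediate from the two analyses above.
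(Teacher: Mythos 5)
Your proof is correct and follows essentially the same route as the paper's: you unfold the provability of $(R_{S}\wedge U\vdash_{\vec x} R_T(\vec w)\wedge V(\vec w))$, handle the two conjuncts separately via the disjunctive axioms for $R_T$ and the Diers predicates, and invoke Lemma~\ref{lem_fp} to pin down $S$-membership and Diers conditions semantically. The only cosmetic difference is in the inclusion $T\subseteq T_0$: you read it off directly from the $R_T$-axioms, whereas the paper invokes $\tilde{\mathbb I}$-irreducibility of the source formula to set up a dichotomy and then rules out the second horn—both arguments are valid and the two are interchangeable here.
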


\begin{proof}
(i) Let us set 
\[
T_{0}:=\{t \mid \textrm{ the sequent } (R_{S}(\vec{x}) \wedge U(\vec{x}) \vdash_{\vec{x}} t(\vec{w}(\vec{x}))=0) \textrm{ is provable in } \tilde{{\mathbb I}} \}.
\]
We have to prove that $T=T_{0}$. Let us start by proving that $T_{0}\subseteq T$. Notice that, since $\vec{y}=\vec{w}(\vec{x})$ is an arrow $\{\vec{x}. R_{S}(\vec{x}) \wedge U(\vec{x})\} \to \{\vec{y}. R_{T}(\vec{y}) \wedge V(\vec{y})\}$ in the syntactic category of the theory $\tilde{\mathbb I}$, the sequent
\[
(R_{S}(\vec{x}) \wedge U(\vec{x}) \vdash_{\vec{x}}  R_{T}(\vec{w}(\vec{x})) \wedge V(\vec{w}(\vec{x}))  )
\]
is provable in $\tilde{{\mathbb I}}$.

If $t\in T_{0}$ then $t\in T$; indeed, if we had $t\notin T$ then the sequent 
\[
(R_{S}(\vec{x}) \wedge U(\vec{x}) \vdash_{\vec{x}} \bot)
\]
would be provable in $\tilde{{\mathbb I}}$, which is contrary to our hypotheses. This proves that $T_{0}\subseteq T$. Let us now prove the converse inclusion. Given a term $t$, the $\tilde{{\mathbb I}}$-irreducibility of the formula $\{\vec{x}.R_{S}(\vec{x}) \wedge U(\vec{x})\}$ implies that either $t\in T_{0}$ or the sequent 
\[
\sigma:=(R_{S}(\vec{x}) \wedge U(\vec{x}) \vdash_{\vec{x}}  t(\vec{w}(\vec{x}))\neq 0)
\]
is provable in $\tilde{{\mathbb I}}$. But the latter possibility is incompatible with the condition $t\in T$, since the provability of $\sigma$ implies that of the sequent $(R_{S}(\vec{x}) \wedge U(\vec{x}) \vdash_{\vec{x}} \bot$, which is contrary to our hypotheses. 

(ii) This follows by noticing that all the Diers predicates $D$ are both preserved and reflected by homomorphisms of $\tilde{{\mathbb I}}$-models, in view of Lemma \ref{lem_fp}.
\end{proof}

\begin{remark}
The proposition shows that instead of specifying an arrow in the syntactic category of the theory $\tilde{{\mathbb I}}$, it suffices to specify its domain $\{\vec{x}. R_{S}(\vec{x}) \wedge U(\vec{x})\}$ and a tuple of terms $\vec{w}(\vec{x})$ in the context $\vec{x}$. Indeed, the pair $(T, V)$ defined in the statement of the proposition satisfies the hypotheses of Proposition \ref{prop_irreducibleformulae}, whence $\vec{y}=\vec{w}(\vec{x})$ defines an arrow $\{\vec{x}. R_{S}(\vec{x}) \wedge U(\vec{x})\} \to \{\vec{y}. R_{T}(\vec{y}) \wedge V(\vec{y})\}$ between $\tilde{\mathbb I}$-irreducible formulae in the syntactic category of the theory $\tilde{\mathbb I}$. 
\end{remark}

The $T$ (resp. $V$) defined in the statement of the proposition will be denoted respectively by $\vec{w}_{\ast}(S)$ and by $\vec{w}_{\ast}(U)$.

\subsection{The syntactic approach to the amalgamation property}\label{sec_amalgamation}

In this section we reformulate the amalgamation property on the category of finitely generated $\mathbb I$-models in syntactic terms, using the characterization provided by Proposition \ref{prop_irreducibleformulae}.

Since every formula can be covered by irreducible formulae, the amalgamation property can be formulated at the syntactic level by requiring that the pullback of two arrows between irreducible formulae be non-zero (i.e., a non-contradictory formula) in the syntactic category of $\tilde{{\mathbb I}}$. 

Given $(S, U)$ and $(T, V)$, such that the formulae $\{\vec{x}. R_{S}(\vec{x}) \wedge U(\vec{x})\}$ and $\{\vec{y}. R_{T}(\vec{y}) \wedge V(\vec{y})\}$ are not $\tilde{\mathbb I}$-provably contradictory, and two terms $\vec{w}(\vec{x})$ and $\vec{z}(\vec{y})$ with the same target such that $\{t \mid t\circ \vec{w} \in S \}=\{t \mid t\circ \vec{z} \in S \}$ and for any Diers predicate $D$,  $(R_{S}(\vec{x}) \wedge U(\vec{x}) \vdash_{\vec{x}} D(\vec{w}(\vec{x})))$ is provable in $\tilde{{\mathbb I}}$ if and only if $(R_{T}(\vec{y}) \wedge V(\vec{y}) \vdash_{\vec{y}} D(\vec{z}(\vec{y})))$ is provable in $\tilde{{\mathbb I}}$, the condition amounts to requiring that the formula  $\{\vec{x}, \vec{y}. R_{S}(\vec{x}) \wedge U(\vec{x}) \wedge R_{T}(\vec{y}) \wedge V(\vec{y}) \wedge \vec{w}(\vec{x})=\vec{z}(\vec{y}) \}$ should not be contradictory in $\tilde{{\mathbb I}}$, i.e. that the sequent 
\[
(R_{S}(\vec{x}) \wedge U(\vec{x}) \wedge R_{T}(\vec{y}) \wedge V(\vec{y}) \wedge \vec{w}(\vec{x})=\vec{z}(\vec{y}) \vdash_{\vec{x}, \vec{y}} \bot)
\]
should not be provable in $\tilde{{\mathbb I}}$.

We shall now prove that this is indeed the case, i.e. that the amalgamation property holds in the category of finitely generated $\mathbb I$-models.

The sequent
\[
(R_{S}(\vec{x}) \wedge U(\vec{x}) \wedge R_{T}(\vec{y}) \wedge V(\vec{y}) \wedge \vec{w}(\vec{x})=\vec{z}(\vec{y}) \vdash_{\vec{x}, \vec{y}} \bot)
\]
is provable in $\tilde{{\mathbb I}}$ if and only if the sequent 
\[
(R_{S}(\vec{x}) \wedge U(\vec{x}) \wedge R_{T}(\vec{y}) \wedge V(\vec{y}) \vdash_{\vec{x}, \vec{y}} \vec{w}(\vec{x})\neq \vec{z}(\vec{y})   )
\]
is provable in $\tilde{{\mathbb I}}$.

Now, given the form of the axioms of $\mathbb I$, the only way to deduce an inequality in the conclusion of a sequent from a set of inequalities in its premise is by means of a term $t$ such that the premise of the sequent entails the equality $(t\circ \vec{w})(\vec{x})=(t\circ \vec{z})(\vec{y})$ and either $(t\circ \vec{w})(\vec{x})=0$ (i.e., $t\circ \vec{w}\in S$ if $t$ has target $\neq 0$) and $(t\circ \vec{z})(\vec{y})\neq 0$ (i.e., $t\circ \vec{z}\notin T$ if $t$ has target $\neq 0$)  or $(t\circ \vec{w})(\vec{x})\neq 0$ (i.e., $t\circ \vec{w}\notin S$ if $t$ has target $\neq 0$) and $(t\circ \vec{z})(\vec{y})=0$ (i.e., $t\circ \vec{z}\in T$ if $t$ has target $\neq 0$). Now, depending on the fact that $t$ has target $0$ or $\neq 0$, our hypotheses ensure that none of these possibilities can arise. This completes our proof. 

\section{The theory of homogeneous models}\label{sec:theoryhomogeneousmodels}

Thanks to the characterization of the homomorphisms between finitely generated $\mathbb I$-models obtained in section \ref{sec_homomorphisms}, we can obtain an explicit axiomatization of the theory of homogeneous $\tilde{\mathbb I}$-models.

\begin{theorem}\label{thm_axiomatizationhomogeneous}
The theory of homogeneous $\tilde{\mathbb I}$-models is obtained from $\tilde{\mathbb I}$ by adding all the sequents of the form 
\[
( R_{\vec{w}_{\ast}(S)}(\vec{y}) \wedge  \vec{w}_{\ast}(U)(\vec{y})  \vdash_{\vec{y}} (\exists \vec{x}) (R_{S}(\vec{x}) \wedge U(\vec{x}) \wedge \vec{y}=\vec{w}(\vec{x}))).
\]
for each $(S, U)$ such that the sequent 
\[
(R_{S}(\vec{x}) \wedge U(\vec{x}) \vdash_{\vec{x}} \bot)
\]
is not provable in $\tilde{{\mathbb I}}$ and any tuple of terms $\vec{w}(\vec{x})$ with target $\vec{y}$.
\end{theorem}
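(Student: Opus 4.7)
\medskip

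\noindent\textbf{Proof proposal.} The plan is to specialize the general syntactic description of the theory of homogeneous $\mathbb T$-models (recalled just before Theorem~\ref{mainFraisse}) to the case $\mathbb T = \tilde{\mathbb I}$, and then rewrite it using the normal forms for $\tilde{\mathbb I}$-irreducible formulae and for arrows between them provided by Propositions~\ref{prop_irreducibleformulae} and \ref{prop_arrows}.

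First, recall that the theory of homogeneous $\mathbb T$-models is obtained from $\mathbb T$ by adjoining, for every pair of formulae $\{\vec{x}.\phi\}$ and $\{\vec{y}.\psi\}$ presenting a $\mathbb T$-model and every $\mathbb T$-provably functional formula $\theta(\vec{x},\vec{y})$ from $\{\vec{x}.\phi\}$ to $\{\vec{y}.\psi\}$, the sequent $(\psi \vdash_{\vec{y}} (\exists \vec{x})\theta(\vec{x},\vec{y}))$. Since $\tilde{\mathbb I}$ is of presheaf type, by Theorem~\ref{Onetoposmany_presheafcomplete}(i) the formulae presenting $\tilde{\mathbb I}$-models are (up to $\tilde{\mathbb I}$-provable equivalence) exactly the $\tilde{\mathbb I}$-irreducible ones, so it suffices to range over such formulae.

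Next, by Proposition~\ref{prop_irreducibleformulae}, the $\tilde{\mathbb I}$-irreducible formulae are precisely those of the form $\{\vec{x}. R_{S}(\vec{x}) \wedge U(\vec{x})\}$ such that $(R_{S}(\vec{x}) \wedge U(\vec{x}) \vdash_{\vec{x}} \bot)$ is not provable in $\tilde{\mathbb I}$, which is exactly the hypothesis on $(S,U)$ appearing in the statement. Then, by Proposition~\ref{prop_arrows} together with the remark immediately after it, every arrow in the syntactic category of $\tilde{\mathbb I}$ whose domain is such an irreducible formula is representable (uniquely up to $\tilde{\mathbb I}$-provable equivalence) by a tuple of terms $\vec{w}(\vec{x})$ with target $\vec{y}$; the associated $\tilde{\mathbb I}$-provably functional formula may be taken to be $\theta(\vec{x},\vec{y}) = R_S(\vec{x}) \wedge U(\vec{x}) \wedge \vec{y}=\vec{w}(\vec{x})$, and the codomain is then forced to be $\{\vec{y}. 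R_{\vec{w}_{\ast}(S)}(\vec{y}) \wedge \vec{w}_{\ast}(U)(\vec{y})\}$.

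Substituting these normal forms for $\phi$, $\psi$ and $\theta$ into the generic homogeneity axiom $(\psi \vdash_{\vec{y}} (\exists \vec{x})\theta)$ produces exactly the sequent scheme displayed in the theorem, and conversely every instance of that scheme arises this way, so the two axiomatizations coincide. The one point requiring care is the verification that $\theta(\vec{x},\vec{y}) = R_S(\vec{x}) \wedge U(\vec{x}) \wedge \vec{y}=\vec{w}(\vec{x})$ is genuinely $\tilde{\mathbb I}$-provably functional from $\{\vec{x}. R_{S}(\vec{x}) \wedge U(\vec{x})\}$ to $\{\vec{y}. R_{\vec{w}_{\ast}(S)}(\vec{y}) \wedge \vec{w}_{\ast}(U)(\vec{y})\}$: single-valuedness and the implication into the domain formula are automatic from the presence of the equation $\vec{y}=\vec{w}(\vec{x})$, while the implication into the codomain formula is exactly what the definitions of $\vec{w}_{\ast}(S)$ and $\vec{w}_{\ast}(U)$ given by Proposition~\ref{prop_arrows}(i)--(ii) are designed to secure. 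I expect no substantive obstacle beyond this bookkeeping, since the structural work has already been absorbed into Propositions~\ref{prop_irreducibleformulae} and \ref{prop_arrows}.
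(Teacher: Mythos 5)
Your proposal is correct and follows the route the paper clearly intends (the paper gives no written proof, only \qed, but the preceding sentence says the axiomatization follows from the characterization of homomorphisms in section \ref{sec_homomorphisms}). You specialize the generic syntactic description of the theory of homogeneous $\mathbb T$-models to $\tilde{\mathbb I}$, plug in the normal forms for $\tilde{\mathbb I}$-irreducible formulae from Proposition \ref{prop_irreducibleformulae} and for arrows from Proposition \ref{prop_arrows} and its remark, and note that these cover all cases — which is exactly the intended argument.
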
\qed

By using Lemma \ref{lem_red}, we can semantically reformulate the homogeneity condition for a model $H$, as follows. For any finitely generated submodel $(A, \vec{\xi})$ of $H$ and any term $\vec{w}(\vec{\xi})$, every element $\vec{b}$ of $H$ such that $U_{\vec{b}}=U_{(A, \vec{w}(\vec{\xi}))}$ and $S_{\vec{b}}=S_{(A, \vec{w}(\vec{\xi}))}$ is of the form $\vec{w}(\vec{c})$ for some $\vec{c}$ such that $U_{(A, \vec{\xi})}=U_{\vec{c}}$ and $S_{(A, \vec{\xi})}=S_{\vec{c}}$ (equivalently, there exists an embedding $j:A\to H$ such that $\vec{b}=\vec{w}(j(\vec{\xi}))$).  

\begin{remark}
If for every term $\vec{w}$ there exists a term $\vec{g}$ such that the composite $\vec{g}\circ \vec{w}$ is provably equal to $0$ in the theory $\tilde{\mathbb I}$ then every homogeneity condition can be interpreted as a generalized exactness condition, entailing in particular the strong exactness of Theorem \ref{thm_exacthom}.
\end{remark}

\subsection{Global and sortwise homogeneity}

We notice that every ${\mathbb I}$-model which is sortwise finitely generated is finitely presentable; in particular, if it is $(0)$ in every place except for a finite number and finitely generated at every other sort then it is finitely presentable (equivalently, finitely generated).

From the existence of the models ${\cal I}_{K}$ considered in Remark \ref{rem:sortwisemodels} it follows that for every homogeneous ${\mathbb I}$-model $H$, $H(\textrm{Spec}(k), \emptyset, 0)$ is homogeneous as a field. It is immediate to see that a field of characteristic $0$ is homogeneous if and only if it is algebraically closed and has infinite transcendence degree (the last condition meaning constructively that for every natural number $n$, every set of $n$ algebraically independent elements over $\mathbb Q$ can be extended to a set of $n+1$ algebraically independent elements over $\mathbb Q$). Examples of homogeneous fields are the field $\mathbb C$ of complex numbers and the algebraic closures $\overline{{\mathbb Q}_{\ell}}$ of the fields ${\mathbb Q}_{\ell}$ of $\ell$-adic integers. It also follows immediately from the existence of the models ${\cal I}_{K}$ that for every finitely generated model $H$ of $\mathbb I$, the field of coefficients $H(\textrm{Spec}(k), \emptyset, 0)$ is finitely generated (recall the categorical characterization of finite generation of a model as the requirement that the covariant Hom functor on the category of models and injective homomorphisms should preserve filtered colimits). The finite generation of $H(\textrm{Spec}(k), \emptyset, 0)$ also follows as an immediate consequence of Lemma \ref{lem_zeroterms}. 

From this remark it follows that if we want to regard the different homology theories as homogeneous models of the theory ${\mathbb I}$, then we should associate to each object $(X, Y, i)$ of Nori's diagram not the singular homology groups with coefficients in $\mathbb Q$ (which are finite-dimensional vector spaces over $\mathbb Q$) or the $\ell$-adic homology groups (which are vector spaces over ${\mathbb Q}_{\ell}$) but their tensorizations with any homogeneous field of characteristic $0$, for instance $\mathbb C$ in the case of singular homology (notice that this amounts precisely to taking singular homology with coefficients in $\mathbb C$) or $\overline{{\mathbb Q}_{\ell}}$ in the case of $\ell$-adic homology. This does not constitute a problem in relation to our goal of proving the independence from $\ell$ of the dimension of the $\ell$-adic cohomology groups since the dimension over the field of coefficients is invariant with respect to the operation of extensions of scalars. 

It is natural to wonder if \ac global' homogeneity implies homogeneity for each sort, not just for that corresponding to the field of coefficients. To answer this question, we can try to fabricate models of $\mathbb I$ which are zero almost everywhere except for a given sort, as follows. Given a vertex of Nori's diagram $(X, Y, i)$, where $i\neq 0$, and a $K$-vector space $V$ (where $K$ is a field of characteristic $0$), we define the expansion $I_{(V, K)}$ of the model ${\cal I}_{K}$ defined in Remark \ref{rem:sortwisemodels} obtained by setting $I_{V}(X, Y, i)=V$ and $I_{V}(X', Y', i')=(0)$ for any other sort $(X', Y', i')$ where if $i'=0$ then $Y'=\emptyset$; we stipulate that ${\cal I}_{(V, K)}$ interprets any endomorphism of $(X, Y, i)$ as the identity and any other edge between different sorts as the zero arrow. 

If the structures ${\cal I}_{(V, K)}$ just defined are $\mathbb I$-models (we have not checked this but we suppose it to be the case) then for any $\mathbb I$-model $H$ and any sort $(X, Y, i)$ such that $X\neq Y$, the $H(\textrm{Spec}(k), \emptyset, 0)$-vector space $H(X, Y, i)$ is homogeneous as a model both of the injectivization of the theory of vector spaces over $\mathbb Q$ and as a model of the injectivization of the theory of vector spaces of a variable field of characteristic zero. Indeed, ${\cal I}_{(V, K)}$ is finitely generated as a $\mathbb I$-model if and only if $K$ is a finitely generated field and $V$ is of finite dimension over $K$. 

We shall now characterize the notion of homogeneity for these two theories.
 
In the injectivization ${\mathbb V}_{K}$ of the theory of vector spaces over a base field $K$ (which is of presheaf type, as remarked in section 8.6 of \cite{OCPT}), the homogeneous models are clearly the vector spaces which are infinite-dimensional over $K$.

\begin{proposition}
In the injectivization of the theory of vector spaces over a variable field, the homogeneous models are precisely the vector spaces $V$ over a field $K$ such that $K$ is a homogeneous field and for every finitely generated subfield $K'$ of $K$, $V$ is homogeneous as a $K'$-vector space (that is, as a model of the theory ${\mathbb V}_{K'}$ introduced above).
\end{proposition}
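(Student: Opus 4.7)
The plan is to establish both directions of the characterization. The forward direction---that homogeneity of $(K, V)$ implies the two stated conditions---follows by specializing the homogeneity hypothesis to carefully chosen finitely generated submodels. To obtain homogeneity of $K$, I would apply the homogeneity of $(K, V)$ to the inclusion $(K_0, \mathbf{0}) \hookrightarrow (K, V)$ (where $K_0$ is any finitely generated subfield of $K$ and $\mathbf{0}$ denotes the trivial vector space) together with a morphism $(K_0, \mathbf{0}) \to (K_1, \mathbf{0})$ induced by an extension $K_0 \hookrightarrow K_1$ of finitely generated fields; the field component of the resulting lift yields a field embedding $K_1 \hookrightarrow K$ extending the inclusion. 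To obtain homogeneity of $V$ as a $K'$-vector space, for $K' \subseteq K$ finitely generated, I would apply the homogeneity of $(K, V)$ to $(K', W) \hookrightarrow (K, V)$ for a finite-dimensional $K'$-subspace $W \subseteq V$ and to a morphism $(K', W) \to (K', W')$ (with identity field component) induced by a finite-dimensional $K'$-linear extension $W \hookrightarrow W'$; the vector-space component of the lift provides the required $K'$-linear extension $W' \hookrightarrow V$ of $W \hookrightarrow V$.

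The converse direction is more delicate. Given a finitely generated submodel $(K_0, V_0) \hookrightarrow (K, V)$ and a morphism $f:(K_0, V_0) \to (K_1, V_1)$ into a finitely generated model, I would proceed in two stages. First, by homogeneity of $K$, extend the inclusion $K_0 \hookrightarrow K$ to a field embedding $\phi:K_1 \hookrightarrow K$, and put $K_1' := \phi(K_1)$. Then, constructing a $K_1'$-linear injection $V_1 \hookrightarrow V$ extending the $K_0$-linear inclusion $V_0 \hookrightarrow V$ splits, via a $K_1$-decomposition $V_1 = (K_1 \cdot V_0) \oplus C$ into the $K_1$-span of $V_0$ and a $K_1$-complement, into two tasks: propagating the inclusion $V_0 \hookrightarrow V$ across the $K_1$-span in a $\phi$-compatible way, and embedding the finite-dimensional free $K_1$-module $C$ as a $K_1'$-subspace of $V$ linearly disjoint from the image of $V_0$. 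The second task follows directly from the hypothesis that $V$ is homogeneous as a $K_1'$-vector space, which in particular forces $V$ to be of infinite dimension over $K_1'$ and hence allows any finite $K_1'$-linearly independent extension.

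The main obstacle is the first task: the $K_1$-submodule $K_1 \cdot V_0 \subseteq V_1$ is determined by the $K_1$-linear relations among elements of $V_0$ holding in $V_1$, and for a $K_1'$-linear map $K_1 \cdot V_0 \to V$ extending the $K_0$-linear inclusion $V_0 \hookrightarrow V$ to exist, those relations must transport under $\phi$ to relations actually holding in $V$. This compatibility is not automatic from homogeneity of $V$ over each finitely generated subfield of $K$ taken separately, and must be arranged by choosing $\phi$ in coordination with the embedding of $V_0$. I would resolve this via a back-and-forth construction, building $\phi$ and the vector-space map simultaneously: at each step, either a new generator of $K_1$ is adjoined (using homogeneity of $K$) to realize a specific element of $K$ dictated by a pending relation in $V_1$, or a new vector generator is embedded (using homogeneity of $V$ over the finitely generated subfield of $K$ already constructed). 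The coordination---making sure at each vector-extension step that the image can simultaneously realize all outstanding $K_1'$-linear relations demanded by the structure of $V_1$---is the technical heart of the proof, and it is precisely here that the quantification over \emph{all} finitely generated subfields $K' \subseteq K$ in the hypothesis becomes essential.
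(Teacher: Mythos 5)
Your forward direction is correct and simply spells out what the paper declares ``clear''. For the converse you have correctly identified a genuine gap, and it is precisely the gap in the paper's own proof: after choosing a field embedding $u:K_1\to K$ extending $\xi_2$ and defining a $K_1$-linear map $g$ on the $K_1$-span $V'$ of $\{f_1(x_i)\mid i\in S\}$ by $g(f_1(x_i)):=\xi_1(x_i)$, the paper factors $f$ through $(V',K_1)\hookrightarrow(V_1,K_1)$ and extends $g$ via homogeneity of $V$ over $K_1$, but it never verifies that $g$ precomposed with the factored map $(V_0,K_0)\to(V',K_1)$ equals $\xi$. This would require the $K_1$-linear relations $f_1(x_j)=\sum_{i\in S}\alpha_{ji}f_1(x_i)$ in $V_1$ to transport under $u$ to relations $\xi_1(x_j)=\sum_{i\in S}u(\alpha_{ji})\xi_1(x_i)$ already holding in $V$; that is exactly the compatibility obstruction you single out.

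However, the back-and-forth you sketch cannot close this gap, because the stated condition is not in fact sufficient. Take $K$ any homogeneous field of characteristic $0$ and $V$ a $K$-vector space with $\dim_K V\geq 2$: both hypotheses hold, since $K$ is homogeneous and $V$ is infinite-dimensional over every finitely generated subfield $K'\subseteq K$ (as $[K:K']$ is infinite). Pick $e_1,e_2\in V$ which are $K$-linearly independent, set $(K_0,V_0)=(\mathbb{Q},\mathbb{Q}e_1\oplus\mathbb{Q}e_2)$ with $\xi$ the inclusion, and take $(K_1,V_1)=(\mathbb{Q}(\sqrt 2),\mathbb{Q}(\sqrt 2))$, a one-dimensional $K_1$-vector space, with $f_2:\mathbb{Q}\hookrightarrow\mathbb{Q}(\sqrt 2)$ the inclusion and $f_1(ae_1+be_2)=a+b\sqrt 2$. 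This $f$ is a sortwise-injective homomorphism between finitely generated models, yet any $\chi=(\chi_1,\chi_2)$ with $\chi\circ f=\xi$ would have to satisfy $e_2=\chi_1(\sqrt 2)=\chi_2(\sqrt 2)\,\chi_1(1)=\chi_2(\sqrt 2)\,e_1$, contradicting the $K$-linear independence of $e_1$ and $e_2$. No choice of the field embedding, and no interleaving of field-steps with vector-steps, can remedy this: semilinearity of $\chi_1$ forces a proportionality between $\xi_1(x_1)$ and $\xi_1(x_2)$ that the given $\xi$ need not exhibit. So only the necessity direction, which you prove correctly, actually holds; the sufficiency claim, together with the paper's proof of it, fails, and the ``technical heart'' you flagged is an unfillable hole rather than a deferred detail.
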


\begin{proof}
The fact that the condition is necessary is clear. We shall prove that it is also sufficient. Given embeddings $\xi=(\xi_{1}, \xi_{2}):(V_{0}, K_{0})\to (V, K)$ and $f=(f_{1}, f_{2}):(V_{0}, K_{0})\to (V_{1}, K_{1})$, we shall construct an embedding $\chi=(\chi_{1}, \chi_{2}):(V_{1}, K_{1})\to (V, K)$ such that $\chi \circ f=\xi$. Using the fact that $K$ is homogeneous as a field, we can find a field embedding $u:K_{1}\to K$ such that $u\circ f_{2}=\xi_{2}$. This allows us to consider $V$ as a $K_{1}$-vector space. Now, let $x_{1}, \ldots, x_{n}$ be a basis of $V_{0}$ as a $K_{0}$-vector space. The elements $f_{1}(x_{1}), \ldots, f_{1}(x_{n})$ might not be linearly independent anymore over $K_{1}$, but we can extract a maximal subset of elements $f_{1}(x_{i})$ (for $i\in S$) which are linearly independent over $K_{1}$. Consider the $K_{1}$-vector space $V'$ generated by these elements. We can define a $K_{1}$-vector space embedding into $(V, K)$ by sending each $f_{1}(x_{i})$, for $i\in S$, to $\xi_{1}(x_{i})$. This is well-defined since $f_{1}$ is injective by our hypothesis and the $f_{1}(x_{i})$, for $i\in S$, are linearly independent over $K_{1}$. On the other hand, the arrow $f$ factors through the canonical embedding $(V', K_{1})\hookrightarrow (V_{1}, K_{1})$; indeed, one can send each $x_{i}$ to the element $f_{1}(x_{i})\in V'$. Given this factorization of $f$, our thesis follows from the assumption that $V$ is homogeneous as a $K_{1}$-vector space.    
\end{proof}

\subsection{Homogeneity and exactness}\label{sec_homogex}

\begin{theorem}\label{thm_exacthom}
Every homogeneous $\tilde{\mathbb I}$-model $H$ satisfies the exactness conditions corresponding to the distinguished pairs in Nori's diagram. In fact, for any distinguished pair $(f,g)$ in Nori's diagram, the following strong exactness property holds: using the notation of section \ref{sec:presheafcompletion}, for every element $y$ in the kernel of $H(g)$ there exists an element $x$ such that $H(f)(x)=y$ and with the property that $H(s)(x)\neq 0$ for each term $s$ such that the equality $s(x)=0$ is not the conclusion of a sequent provable in the theory $\tilde{\mathbb I}$ whose premise is the conjunction of the Diers conditions of the form $D\circ f$, where $D$ is a Diers condition satisfied by $y$ in $H$, with conditions of the form $t(f(x))=0$ for terms $t\in S_{(H_{y}, y)}$.

In particular, there exists in the inverse image by $f$ of any element $y\in \textrm{Ker}(g)$ an element $x$ which is generic in the sense that for any term $s$ such that $H(s)(x)=0$, we have that $H(s)(x')=0$ for any $x'$ such that $H(f)(x')=y$.
\end{theorem}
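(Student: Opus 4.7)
The plan is to realize $y$ (with $H(g)(y)=0$) as the image under $f$ of a generic element of a carefully chosen finitely presentable $\tilde{\mathbb I}$-model, and then transfer this preimage into $H$ via the homogeneity axiomatization of Theorem~\ref{thm_axiomatizationhomogeneous}. Let $\{y.\,R_T(y)\wedge V(y)\}$ present the submodel $\langle y\rangle\subseteq H$, so that $T=S_{(H,y)}$, $V=U_{(H,y)}$, and in particular $g\in T$.

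The key step is to construct a $\tilde{\mathbb I}$-irreducible formula $\{x.\,R_S(x)\wedge U(x)\}$ on a single variable $x$ of sort $c$ for which $\vec{w}_\ast(S)=T$ and $\vec{w}_\ast(U)=V$ when $\vec{w}(x)=f(x)$. I would define $S$ as the smallest set of terms $s:c\to \cdot$ containing $\{t\circ f\mid t\in T\}$ and closed under condition~(ii) of Proposition~\ref{prop_irreducibleformulae} relative to a finite conjunction $U(x)$ of Diers predicates on $x$, where $U$ comprises $V(f(x))$ together with every further Diers condition on $x$ that is $\tilde{\mathbb I}$-derivable from $V(y)\wedge R_T(y)\wedge f(x)=y$. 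By Proposition~\ref{prop_arrows}, once $(S,U)$ is irreducible and the pushforward identities hold, $\vec{w}$ defines an arrow $\{x.\,R_S\wedge U\}\to\{y.\,R_T\wedge V\}$ in the syntactic category of $\tilde{\mathbb I}$; applying the homogeneity axiom scheme to this arrow and to $y\in H$ then produces an $x\in H$ of type $(S,U)$ with $H(f)(x)=y$. The genericity assertions follow from the minimality of $S$: by construction, $s\in S$ exactly when $s(x)=0$ is the conclusion of a provable sequent of the form described in the theorem statement, so $R_S(x)$ yields $H(s)(x)\neq 0$ for every $s\notin S$, while any alternative preimage $x'$ has type $S_{x'}\supseteq S$ and hence satisfies $H(s)(x')=0$ whenever $H(s)(x)=0$.

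The principal technical obstacle is verifying both the irreducibility of $(S,U)$ and the identity $\vec{w}_\ast(S)=T$ (rather than a strictly larger set). Both reduce to the following translation principle: every $\tilde{\mathbb I}$-provable sequent of the form $(U(x)\wedge\bigwedge_i (t_i\circ f)(x)=0 \vdash (t'\circ f)(x)=0)$ with $t_i\in T$ comes from a $\tilde{\mathbb I}$-provable sequent $(V(y)\wedge\bigwedge_i t_i(y)=0 \vdash t'(y)=0)$, forcing $t'\in T$ by closedness of $T$ under such derivations. This principle would exploit the fact, built into the very definition of $\mathbb I$, that the algebraic sequents derivable from the exactness conditions for Nori's distinguished pairs are already among the axioms of $\mathbb I$; this is what makes sound, at the algebraic level, the identification of an arbitrary $y$ satisfying $g(y)=0$ with $f(x)$ for a new generic $x$. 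The analogous statement ensuring $\vec{w}_\ast(U)=V$ for the Diers predicates is the remaining delicate point, and should follow from the definition of the Diers predicates together with the closedness of $V$ under $\tilde{\mathbb I}$-provable derivations from the relations in $T$.
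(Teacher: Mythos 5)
Your proposal follows essentially the same route as the paper's own proof: you pull back $(T,V)=(S_{(H_y,y)},U_{(H_y,y)})$ along $f$ to define $(S,U)$, with $S$ the derivational closure of $\{t\circ f \mid t\in T\}$ under condition~(ii) of Proposition~\ref{prop_irreducibleformulae} and $U$ built from the Diers conditions $D\circ f$, then verify irreducibility and the pushforward identities by exploiting the translation via $g\in T$ back to $\tilde{\mathbb I}$-provable sequents in $y$, and finally invoke the homogeneity axiom scheme of Theorem~\ref{thm_axiomatizationhomogeneous}. The delicate points you flag (the translation principle for $f_{\ast}(S)=T$ using that the algebraic consequences of exactness are among the axioms of $\mathbb I$, and the Diers-side identity $f_{\ast}(U)=V$) are exactly the points the paper itself either argues briefly or leaves implicit, so the proposal is correct and in line with the paper's argument.
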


\begin{proof}
We shall prove our thesis by using the syntactic characterization of the formulae which present the finitely generated $\mathbb I$-models in the theory $\tilde{{\mathbb I}}$ provided by Proposition \ref{prop_irreducibleformulae}. 

Given $(T, V)$ such that the sequent 
\[
(R_{T}(y) \wedge V(y) \vdash_{y} \bot)
\]
is not provable in $\tilde{\mathbb I}$ and a distinguished pair $(f, g)$ such that $g\in T$, we can define a pair $(S, U)$ such that $T=f_{\ast}(S)$ and $V=f_{\ast}(U)$ and such that the sequent
\[
(R_{S}(x) \wedge U(x) \vdash_{x} \bot)
\] 
is not provable in $\tilde{{\mathbb I}}$, as follows. 

We take $U$ as the finite conjunction of all the Diers conditions of the form $D\circ f$, where $D(y)$ is a Diers condition in $V$. 

We stipulate that $w\in S$ if and only if there exists a sequent
\[
(U(x) \wedge w_{1}(x)=0\wedge \cdots \wedge w_{n}(x)=0 \vdash_{x} w(x)=0)
\] 
provable in $\tilde{\mathbb I}$ where $w_{1}, \ldots, w_{n}$ are all terms of the form $t\circ f$ where $t\in T$.  Let us verify that $T=f_{\ast}(S)$. We clearly have $T\subseteq f_{\ast}(S)$. To prove the converse inclusion, we have to verify that if $t\circ f\in S$ then $t\in T$. If $t\circ f\in S$ then by definition of $S$ there exists a sequent 
\[
(U(x) \wedge w_{1}(x)=0\wedge \cdots \wedge w_{n}(x)=0 \vdash_{x} t(f(x))=0)
\]
which is provable in $\tilde{\mathbb I}$, where for each $i$ there exists $w_{i}'\in T$ such that $w_{i}=w_{i}' \circ f$.

By definition of $\tilde{\mathbb I}$ and of $U$, since $g\in T$, the sequent
\[
(V(y) \wedge g(y)=0 \wedge w_{1}'(y)=0 \wedge \cdots \wedge w_{n}'(y)=0 \vdash_{y} t(y)=0)
\]
is provable in $\tilde{\mathbb I}$, whence $t\in T$, as required.

It remains to prove that the sequent
\[
(R_{S}(x) \wedge U(x) \vdash_{x} \bot)
\] 
is not provable in $\tilde{{\mathbb I}}$. To show this, it suffices to observe that condition (ii) of Proposition \ref{prop_irreducibleformulae} is satisfied by the very definition of $(S, U)$.

Now, to deduce the strong exactness property for a homogeneous $\tilde{{\mathbb I}}$-model $H$, apply this to the pair $(T, V)$ given by: $T=S_{(H_{y}, y)}$ and $V=U_{(H_{y}, y)}$, where $H_{y}$ is the submodel of $H$ generated by the element $y$ (the notations being those of section \ref{sec:presheafcompletion}). Given an element $x$ such that $H(f)(x)=y$, the equality $H(s)(x)=0$ implies that $s(x)=0$ is the conclusion of a sequent provable in the theory $\tilde{\mathbb I}$ whose premise is the conjunction of $U(x)$ with conditions of the form $t(f(x))=0$ for terms $t\in T$, whence $H(s)(x')=0$ for any $x'$ for each $H(f)(x')=y$.
\end{proof}
 
\subsubsection{Extending finitely generated $\mathbb I$-models}

Given an element $b\in H(X', Y', i')$, consider the substructure $H_{b}$ of $H$ generated by $b$. Given the form of the axioms of $\mathbb I$, this is still a $\mathbb I$-model. 

Given a distinguished pair $(f, g)$ in Nori's quiver, where $(X, Y, i)$ and $(X', Y', i')$ are respectively the source and target of $f$, we wonder whether it is possible to extend $H_{b}$ by a $\mathbb I$-model $A$ generated by an element $\xi$ such that $A(f)(\xi)=b$. This will provide an alternative, semantic proof of the genericity property of homogeneous models with respect to distinguished pairs established in Theorem \ref{thm_exacthom}.   

The following lemma will be useful to us.

\begin{lemma}\label{lem_red}
Let $\tau$ be a geometric sequent over the signature of $\tilde{{\mathbb I}}$ whose conclusion is a disjunction of finite conjunctions of atomic formulae. If $\tau$ is provable in the theory obtained by adding to $\tilde{{\mathbb I}}$ all the axioms of the form 
\[
(g(x)=0 \vdash_{x} (\exists y)f(y)=x)
\]
for all distinguished pairs $(f, g)$ in Nori's diagram then $\tau$ is provable in $\tilde{{\mathbb I}}$.
\end{lemma}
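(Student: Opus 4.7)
The approach is semantic. Since $\tilde{\mathbb I}$ is of presheaf type, provability of a geometric sequent in $\tilde{\mathbb I}$ is equivalent to validity in every set-based $\tilde{\mathbb I}$-model, so it suffices to fix such a model $M$ together with a tuple $\vec{a}$ in $M$ satisfying the premise $\phi$ of $\tau$ and exhibit an index $i$ for which $M \vDash \bigwedge_{j} \alpha_{ij}(\vec{a})$, where $\bigvee_{i}\bigwedge_{j} \alpha_{ij}$ is the conclusion of $\tau$.

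The heart of the argument is the construction of a $\tilde{\mathbb I}$-model homomorphism $h: M \to M^{\ast}$ into a set-based $\tilde{\mathbb I}$-model $M^{\ast}$ which additionally satisfies every exactness axiom of the form $(g(x)=0 \vdash_{x} (\exists y) f(y)=x)$. I would build $M^{\ast}$ by transfinite iteration inside the cocomplete category $\tilde{\mathbb I}\textrm{-mod}(\Set)$ (cocompleteness follows from the presheaf-type property, which makes this category locally finitely presentable): at each successor stage, form a pushout that freely adjoins, for every distinguished pair $(f,g)$ with $f:c\to d$ and $g:d\to e$ and every element $y$ of the current model satisfying $g(y)=0$, a generic preimage $x$ of sort $c$ with $f(x)=y$; at limit stages take filtered colimits. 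The process stabilises at a sufficiently large ordinal, yielding $M^{\ast}$. Because $\tilde{\mathbb I}$ is an expansion of an injectivization, every homomorphism in $\tilde{\mathbb I}\textrm{-mod}(\Set)$ is sortwise injective, so in particular $h$ is.

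Given $h$, the rest is routine. Geometric formulae are preserved by model homomorphisms, so $M^{\ast}\vDash \phi(h(\vec{a}))$; soundness applied to the hypothesis gives $M^{\ast}\vDash \tau$, and therefore some index $i$ satisfies $M^{\ast}\vDash \bigwedge_{j}\alpha_{ij}(h(\vec{a}))$. A case-by-case check shows that every kind of atomic formula over the signature of $\tilde{\mathbb I}$ is both preserved and reflected by sortwise injective $\tilde{\mathbb I}$-homomorphisms: equations $t_{1}=t_{2}$ trivially; the predicates $R_{S}$ because the axioms of $\tilde{\mathbb I}$ fix their interpretation in terms of the vanishing/non-vanishing pattern of terms, which is invariant under sortwise injections; the Diers predicates $T_{n}$ because algebraic (in)dependence is invariant under field embeddings; and the injectivization predicates $D_{A}$ because they literally encode the inequality relation. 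Consequently $M \vDash \bigwedge_{j}\alpha_{ij}(\vec{a})$, hence $M \vDash \tau$, and via the presheaf-type property we obtain the provability of $\tau$ in $\tilde{\mathbb I}$.

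The main obstacle is the construction of $M^{\ast}$: one has to verify that the transfinite iteration of pushouts produces at each stage a genuine $\tilde{\mathbb I}$-model — that is, that the addition of generic preimages does not force any unintended identifications under the algebraic axioms, the functoriality, boundary-naturality and complex-condition axioms, the Diers axioms, and the disjunctive axioms defining the $R_{S}$ — and that the iteration eventually covers every element of every kernel of every distinguished pair, so that all exactness axioms hold in $M^{\ast}$. Once this construction is secured, the remainder of the proof reduces to the standard combination of soundness, preservation of geometric formulae, and reflection of atomic formulae through injections.
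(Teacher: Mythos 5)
Your proposal is semantic: embed an arbitrary $\tilde{\mathbb I}$-model $M$ into a $\tilde{\mathbb I}\cup\mathbb{E}$-model $M^{\ast}$ by transfinitely adjoining preimages, and transport the conclusion of $\tau$ back along the (necessarily injective) embedding using reflection of atomic formulae. The paper's proof is entirely syntactic: it passes to a sheaf subtopos representation via \cite{OCL}, reduces $\tau$ by counterposition and provable equivalences to a sequent with an atomic premise and conclusion, uses classical compactness (Theorem 3.5 of \cite{OC7}, under the axiom of choice, together with the fact that $\tilde{{\mathbb I}}\cup\mathbb{E}$ and $\mathbb{I}\cup\mathbb{E}$ have the same set-based models) to make everything finitary, then uses the regularity of the premise (Theorem 3.4 of \cite{OC7}) to cut the finite disjunction down to a singleton, arriving at an algebraic sequent; at that point the conclusion is immediate because $\mathbb{I}$ was \emph{defined} to contain every algebraic sequent derivable from the exactness conditions. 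These are genuinely different strategies, so the key question is whether yours closes.

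It does not, for two concrete reasons. First, the cocompleteness you invoke is not available: being of presheaf type only makes $\tilde{\mathbb I}\textrm{-mod}(\Set)$ \emph{finitely accessible} (an ind-completion of $\textrm{f.p.}\,\tilde{\mathbb I}\textrm{-mod}(\Set)$), not locally finitely presentable; that stronger conclusion would require the category of finitely presentable models to have all finite colimits, which is precisely what is \emph{not} at hand here -- the paper only establishes the amalgamation property in section \ref{sec_amalgamation}, not pushouts. So the ``pushout at each successor stage'' is not something you can simply take inside $\tilde{\mathbb I}\textrm{-mod}(\Set)$. Second, and more seriously, even granting some ad hoc colimit construction, the substantive step -- showing that one can adjoin a generic preimage of $b$ along $f_{0}$ without the $\tilde{\mathbb I}$-axioms collapsing the new element onto an old one or producing $\bot$ -- is exactly the content of Theorem \ref{thm_constructionA}, and the paper's proof of that theorem \emph{invokes} Lemma \ref{lem_red} at the crucial point where it verifies that $H_{b}$ embeds into $A$. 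Your ``main obstacle'' is therefore not a routine verification to be supplied later: it is the lemma itself in a thin disguise, and the natural route to filling it runs through the statement you are trying to prove. Put differently, your argument would establish the strictly stronger assertion that every $\tilde{\mathbb I}$-model embeds into a $\tilde{\mathbb I}\cup\mathbb{E}$-model, and the paper's logical architecture has Lemma \ref{lem_red} come \emph{first} precisely so that this stronger fact (Theorem \ref{thm_constructionA}) can be derived from it, not the other way round. The reflection-of-atomics analysis at the end of your proposal is correct, but it rests entirely on the unestablished existence of $h:M\to M^{\ast}$.
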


\begin{proof}
Consider the classifying topos of the theory obtained by adding the set of sequents $\mathbb E$ specified in the lemma to the theory $\tilde{\mathbb I}$. By the duality theorem in \cite{OCL}, this topos can be represented as $\Sh({\cal C}, J)$, where $\cal C$ is the opposite of the category of finitely presentable $\tilde{\mathbb I}$-models and $J$ is generated by sieves generated by single arrows. By Proposition 2.6 \cite{OC7} and the fact that every geometric formula is provably equivalent in $\tilde{\mathbb I}$ to a disjunction of $\tilde{\mathbb I}$-irreducible formulae in the same context, we can suppose the premise and the conclusion of $\tau$ to be $\tilde{\mathbb I}$-irreducible formulae, i.e. formulae of the form $\{\vec{x}. R_{S}(\vec{x}) \wedge U(\vec{x})\}$ (cf. section \ref{sec:presheafcompletion}).

Now, the additional predicates that we have added have the meaning of an infinitary conjunction of formulae of the form $t(\vec{x})=0$ or of the form $t(\vec{x})\neq 0$. Now, a sequent of the form $(\psi \wedge t(\vec{x})\neq 0 \vdash \chi)$ is provably equivalent to the sequent $(\psi \vdash \chi \vee t(\vec{x})=0)$.

At the cost of replacing a single sequent by an equivalent set of sequents, we can suppose without loss of generality the conclusion of $\tau$ to be a formula of the form $t(\vec{x})=0$ or of the form $t(\vec{x})\neq 0$ (recall that the predicates $R_{S}$ and $U$ have the same meaning as infinitary conjunctions of such conditions).

Let us first consider the second case, that is when the conclusion of $\tau$ is given by $t(\vec{x})\neq 0$. Taking the counterpositive of this sequent, we obtain a sequent whose premise is the formula $t(\vec{x})=0$ and whose conclusion is a (possibly infinitary) disjunction of formulae of the form $t'(\vec{x})=0$ or $t'(\vec{x})\neq 0$. Given the fact that $\tilde{{\mathbb I}}\cup {\mathbb E}$ has the same set-based models as the coherent theory ${\mathbb I}\cup {\mathbb E}$, it follows by assuming the axiom of choice that this sequent is provable in ${\mathbb I}\cup {\mathbb E}$ and hence that the disjunction on the right hand side can be supposed to be finitary without loss of generality (cf. Theorem 3.5 \cite{OC7}). But this sequent is equivalent to one whose premise is a finite conjunction of literals of the form $t(\vec{x})=0$ (a sequent of the form $(\psi \wedge t(\vec{x})=0 \vdash \chi)$ is provably equivalent to the sequent $(\psi \vdash \chi \vee t(\vec{x})\neq 0)$) and whose conclusion is a finite disjunction of literals of the form $t(\vec{x})=0$. This sequent is thus provable in the regular theory ${\mathbb S}_{\cal K}\cup {\mathbb E}$ (since this theory is finitary and has the same set-based models as the theory ${\mathbb I}\cup {\mathbb E}$) and the fact that the formula in the premise of the sequent is regular implies that one can suppose without loss of generality the finite family over which the disjunction on the right hand side of the sequent is indexed to be a singleton (cf. Theorem 3.4 \cite{OC7}). Therefore $\tau$ is an algebraic sequent whence it is provable in ${\mathbb I}$, as required.

Let us now turn to the first case, i.e. that of a sequent whose conclusion is a formula of the form $t(\vec{x})=0$. Taking the counterpositive of this sequent, we obtain a sequent whose premise is the formula $t(\vec{x})\neq 0$ and whose conclusion is a (possibly infinitary) disjunction of formulae of the form $t'(\vec{x})=0$ or $t'(\vec{x})\neq 0$. This sequent is clearly equivalent to one whose premise is $\top$ and whose conclusion is a disjunction of formulae of the form $t'(\vec{x})=0$ or $t'(\vec{x})\neq 0$. We can then conclude by the same arguments as above. 
\end{proof}

\begin{remark}
Lemma \ref{lem_red} shows that, as it can be naturally expected, completing the theory $\mathbb I$ to a theory of presheaf type preserves the equivalence between provability of geometric sequents whose conclusion is a disjunction of Horn formulae in the theory and in the quotient of it obtained by adding the sequents in $\mathbb E$.    
\end{remark}

\begin{theorem}\label{thm_constructionA}
Suppose that $H_{b}$ is an $\mathbb I$-model finitely generated by an element $b\in H_{b}(X, Y, i)$. Then for any distinguished pair $(f_{0}, g_{0})$ such that $g_{0}(b)=0$ there exists a $\mathbb I$-model $A$ in which $H_{b}$ embeds containing an element $a$ such that $A(f_{0})(a)=b$.
\end{theorem}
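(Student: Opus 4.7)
The plan is to mimic at the syntactic level the construction carried out in the first half of the proof of Theorem \ref{thm_exacthom}. By Proposition \ref{prop_irreducibleformulae}, let $\{x. R_{P}(x) \wedge Q(x)\}$ be a formula presenting $H_{b}$, where $x$ has sort $(X, Y, i)$ and $b$ corresponds to the generator. The hypothesis $g_{0}(b)=0$ together with Lemma \ref{lem_fp} forces $g_{0}\in P$.

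Now let $y$ be a variable whose sort is the source of $f_{0}$. Following the recipe of the proof of Theorem \ref{thm_exacthom}, I would define the pair $(P', Q')$ as follows: let $Q'(y)$ be the (finite) conjunction of all formulae $D(f_{0}(y))$ as $D$ ranges over the Diers conditions occurring in $Q$, and let $P'$ consist of all terms $w(y)$ for which there exist $t_{1}, \ldots, t_{n}\in P$ such that the sequent
\[
(Q'(y) \wedge t_{1}(f_{0}(y))=0 \wedge \cdots \wedge t_{n}(f_{0}(y))=0 \vdash_{y} w(y)=0)
\]
is provable in $\tilde{{\mathbb I}}$. The argument given inside the proof of Theorem \ref{thm_exacthom}, applied with $(T, V)=(P, Q)$ and $(f, g)=(f_{0}, g_{0})$, then establishes simultaneously that $f_{0\ast}(P')=P$, $f_{0\ast}(Q')=Q$, and that condition (ii) of Proposition \ref{prop_irreducibleformulae} holds for $(P', Q')$. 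Hence $\{y. R_{P'}(y) \wedge Q'(y)\}$ presents a $\mathbb I$-model $A$, with a distinguished generator $a$ satisfying this formula.

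To produce the embedding $H_{b}\hookrightarrow A$, I would invoke the remark following Proposition \ref{prop_arrows}: since $f_{0\ast}(P')=P$ and $f_{0\ast}(Q')=Q$, the single term $f_{0}(y)$ defines a well-formed arrow $[x=f_{0}(y)]:\{y. R_{P'}(y) \wedge Q'(y)\} \to \{x. R_{P}(x) \wedge Q(x)\}$ in the syntactic category of $\tilde{{\mathbb I}}$. By Proposition \ref{prop_arrows}, this arrow corresponds to an $\tilde{{\mathbb I}}$-model homomorphism $j:H_{b}\to A$ sending the generator $b$ to $A(f_{0})(a)$. Injectivity of $j$ is automatic from Definition \ref{injectivization}, since $\mathbb I$ is an injectivized theory, so $j$ is the desired embedding and $a$ is the required preimage of $b$.

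The principal obstacle is verifying that the pair $(P', Q')$ really satisfies condition (ii) of Proposition \ref{prop_irreducibleformulae}, i.e.\ that $R_{P'}(y) \wedge Q'(y)$ is consistent in $\tilde{{\mathbb I}}$. This is where the distinguished-pair hypothesis is essential: given any candidate sequent of the form displayed above with $w_{1}, \ldots, w_{n}\in P'$, one must transfer it down to a sequent over $\{x. R_{P}(x) \wedge Q(x)\}$ witnessing closure under provable consequence, and this transfer works precisely because $g_{0}\in P$ and the complex condition $g_{0}\circ f_{0}=0$ can be fed in as an extra hypothesis on the $y$-side. This is exactly the computation already performed in the proof of Theorem \ref{thm_exacthom}, so no new conceptual input beyond that theorem is required.
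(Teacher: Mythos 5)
Your proposal is essentially correct and does yield the statement, but it follows a genuinely different route from the paper, and it changes the role that this theorem plays in the paper's architecture.

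The paper's proof is a direct semantic construction: it builds $A$ explicitly as a quotient of $\bigoplus_{s} K_{0}1_{s} \oplus H_{b}(c)$ by a subspace $E(c)$ defined via provability in $\tilde{\mathbb I}$, then verifies by hand that $A$ is a $\mathbb I$-model, that $E(c)$ is well-defined and functorial, that $H_{b}$ embeds (using Lemma \ref{lem_red} to pull a $y$-sequent down to an $x$-sequent), and that $A$ satisfies the connected-component axioms. The explicit construction is what makes this result, as the paper says just before stating it, ``an alternative, semantic proof'' of the genericity property established syntactically in Theorem \ref{thm_exacthom}. Your proof, by contrast, is purely syntactic: you produce the presenting formula $\{y.\,R_{P'}(y)\wedge Q'(y)\}$ by reusing the $(S,U)$-construction from inside the proof of Theorem \ref{thm_exacthom}, appeal to Proposition \ref{prop_irreducibleformulae} to get the model $A$, and then read off the embedding from Proposition \ref{prop_arrows} and the remark following it. This is clean and shorter, and all the cited results do support each step (including $j(b)=A(f_{0})(a)$ via the bijection in section \ref{sec_homomorphisms}, and injectivity from Definition \ref{injectivization}). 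However, because your proof \emph{derives} Theorem \ref{thm_constructionA} from the argument in Theorem \ref{thm_exacthom}, it cannot serve as the independent alternative proof that the paper intended. Both approaches are legitimate derivations of the theorem; the paper's explicit model has the additional virtue, recorded in the remark that follows, that every $\mathbb I$-model $H'$ containing $H_{b}$ with a preimage of $b$ is a quotient of $A$.

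One substantive correction to your final paragraph: the distinguished-pair hypothesis $g_{0}\in P$ is \emph{not} what makes condition (ii) of Proposition \ref{prop_irreducibleformulae} hold for $(P',Q')$. Condition (ii) follows purely from the closure-type definition of $P'$ (if each $w_{i}\in P'$ is a provable consequence of finitely many $t^{(i)}_{j}\circ f_{0}$ with $t^{(i)}_{j}\in P$, then a consequence of the $w_{i}$'s is a consequence of the union of these, hence lies in $P'$), with no reference to $g_{0}$ at all. The place where $g_{0}\in P$ is genuinely needed is in establishing the inclusion $f_{0\ast}(P')\subseteq P$ --- i.e.\ that $t\circ f_{0}\in P'$ forces $t\in P$ --- which is the step that lets the term $f_{0}(y)$ define an arrow landing exactly on $\{x.\,R_{P}(x)\wedge Q(x)\}$ rather than on some larger formula. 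So the distinguished-pair hypothesis is used to pin down the codomain of the arrow, not the consistency of its domain.
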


\begin{proof}
Let us denote by $K_{0}$ the (finitely generated) field of coefficients of the model $H_{b}$, and by $c_{0}:=(X, Y, i)$ and $c_{0}':=(X', Y', i')$ respectively the codomain and the domain of $f_{0}$.

We define $A$ as follows. We set $A(\textrm{Spec}(k), \emptyset, 0)=K_{0}$. Notice that, by definition of the structure generated by an element $b$, every element of $K_{0}$ is of the form $t(b)$ where $t(z)$ is a term of type $(X, Y, i)\to (\textrm{Spec}(k), \emptyset, 0)$; of course, $t$ is not uniquely determined since there can be different terms $t(z)$ and $t'(z)$ such that $t(b)=t'(b)$.  For any sort $c\neq 0$, we set $A(c)$ equal to the quotient of the direct sum $V(c):=\bigoplus_{s} K_{0}1_{s} \oplus H_{b}(c)$, where the big direct sum is indexed over all the terms $s:c_{0}'\to c$ by the subspace $E(c)$ defined as follows. 

We stipulate that an element $\alpha:=\sum_{s\in S}k_{s}1_{s}+u$ (where $S$ is a finite subset of the set of terms $c_{0}'\to c$) belongs to $E(c)$ if and only if there exists a term $G(w)$ of type $(X, Y, i)\to c$ such that $u=G(b)$ and for each $s\in S$ a term $F_{s}(z)$ of type $(X, Y, i)\to (\textrm{Spec}(k), \emptyset, 0)$ such that $k_{s}=F_{s}(b)$ and the sequent  
\[
(\phi_{H_{b}}(f_{0}(y)) \vdash_{y} \sum_{s\in S} F_{s}(f_{0}(y)) \vec{s}(y)+ G(f_{0}(y))=0) 
\]
is provable in $\tilde{{\mathbb I}}$, where $\phi_{H_{b}}(x)$ is a formula which is satisfied by $b$ in $H_{b}$ and which provably implies in $\tilde{{\mathbb I}}$ any formula $h(x)=0$ such that $h(b)=0$ (for instance, one can take $\phi_{H_{b}}$ to be the infinitary conjunction of all such formulae - this is equivalent to a geometric formula in ${\mathbb I}$ by the definability theorem for theories of presheaf type, cf. Theorem 2.2 \cite{OC6}). 

For example, we have that $1_{f_{0}}-b$ belongs to $E(c_{0})$.

It is important to remark the following fact: for \emph{any} other representation $\alpha:=\sum_{s'\in S'}F'_{s'}(b)1_{s'}+G'(b)$ of our element $\alpha$, if $\alpha$ belongs to $E(c)$ then the sequent
\[
(\phi_{H_{b}}(f_{0}(y)) \vdash_{y} \sum_{s'\in S'} F'_{s'}(f_{0}(y)) s'(y)+ G'(f_{0}(y))=0) 
\] 
is provable in $\tilde{\mathbb I}$. Indeed, the equality 
\[
\sum_{s'\in S'}F'_{s'}(b)1_{s'}+G'(b)=\sum_{s\in S}F_{s}(b)1_{s}+G(b)
\]
is equivalent to the conjunction of the two conditions 
\[
\sum_{s'\in S'}F'_{s'}(b)1_{s'}=\sum_{s\in S}F_{s}(b)1_{s}
\]
and 
\[
G(b)=G'(b).
\]
But if all the coefficients of the $1_{s}$ and $1_{s'}$ are non-zero, as we can clearly suppose without loss of generality, the former condition is equivalent to the conjunction of the conditions $S=S'$ and $F'_{s}(b)=F_{s}(b)$ for each $s\in S$. Now, since all the sequents
\[
(\phi_{H_{b}}(f_{0}(y)) \vdash_{y} F'_{s}(f_{0}(y))=F_{s}(f_{0}(y)))
\]
(for $s\in S$) and
\[
(\phi_{H_{b}}(f_{0}(y)) \vdash_{y} G(f_{0}(y))=G(f_{0}(y)))
\]
are provable in $\tilde{{\mathbb I}}$, our claim follows from the equality axioms for geometric logic.

We can thus conclude that an element $\sum_{s\in S}F_{s}(b)1_{s}+G(b)$ belongs to $E(c)$ \emph{if and only if} the sequent 
\[
(\phi_{H_{b}}(f_{0}(y)) \vdash_{y} \sum_{s\in S} F_{s}(f_{0}(y)) s(y)+ G(f_{0}(y))=0) 
\]
is provable in $\tilde{{\mathbb I}}$.  

Let us first verify that $A$ is a $\mathbb I$-model, and that $H_{b}$ embeds into $A$. First, we have to check that $E(c)$ is indeed a subspace and that the assignment $c\to E(c)$ is functorial. Given an element $\alpha=\sum_{s\in S}F_{s}(b)1_{s}+G(b)$, we denote by $\tilde{\alpha}_{y}$ the expression $\sum_{s\in S} F_{s}(f_{0}(y))s(y)+ G(f_{0}(y))$. We have already observed that under the premise $\phi_{H_{b}}(f_{0}(y))$ this is well-defined up to provable equality $\sim$ in $\tilde{{\mathbb I}}$, that is, it does not depend on the representation of $\alpha$. To conclude that $E(c)$ is a subspace it suffices to observe that the operation $\alpha \to \tilde{\alpha}_{y}$ respects the sum and product by a scalar (up to the relation $\sim$). So $A$ is well-defined and satisfies the functoriality and border naturality axioms. The fact that it satisfies the complex condition is also straightforward. 
  
The fact that $A$ satisfies the axioms for connected components will follow from the fact that $H_{b}$ does once we have proved that it embeds into $A$, since the field of coefficients of $H_{b}$ and $A$ is the same.

Let us prove that $H_{b}$ embeds into $A$. We have to prove that if $u\in E(c)$ with $u\in H_{b}(c)$ then $u=0$. 

If $u=G(b)\in E(c)$ then by definition of $E(c)$ the sequent 
\[
(\phi_{H_{b}}(f_{0}(y))) \vdash_{y} G(f_{0}(y)))=0) 
\]
is provable in $\tilde{{\mathbb I}}$. Therefore, by Lemma \ref{lem_red}, the sequent
\[
(g_{0}(x)=0 \wedge \phi_{H_{b}}(x) \vdash_{x} G(x)=0)
\]
is provable in $\tilde{{\mathbb I}}$. Now, since $H_{b}$ is a model of $\tilde{\mathbb I}$, it follows that $G(b)=0$, as required.

The fact that $A$ is a model of $\mathbb I$ can be proved by using the same method employed in the proof of Proposition \ref{prop_irreducibleformulae}.
\end{proof}

\begin{remarks}
\begin{enumerate}[(i)]
\item The model $A$ extending $H_{b}$ constructed in the above proof has the property that every $\mathbb I$-model $H'$ containing $H_{b}$ and an element $a$ such that $H'(f_{0})(a)=b$ is a quotient of $A$.

\item If the model $H_{b}$ is presented by a formula $\{x. R_{S}(x) \wedge U(x)\}$ then the model $A$ constructed in the above proof is presented by the formula $\{y. R_{T}(y) \wedge V(y)\}$ where $(T, V)$ is defined as in Proposition \ref{prop_arrows}. Indeed, by definition of $A$, $s(a)=0$ in $A$ if and only if the sequent
\[
(\phi_{H_{b}}(f_{0}(y))) \vdash_{y} s(y)=0) 
\]
is provable in $\tilde{\mathbb I}$ (recall that $a=1_{id}$ is the generator of $A$). 

Without loss of generality, one can suppose the formula $\phi_{H_{b}}$ to be the infinitary conjunction $C_{b}$ of all the formulae of the form $t(x)=0$ for $t$ a term (possibly with target $0$) such that $t(b)=0$ in $H_{b}$ (notice in particular that any $t\in T$ is of this form). Indeed, any formula $\phi_{H_{b}}$ which provably entails all the formulae $h(x)=0$ such that $h(b)=0$ and which is satisfied by $b$ in $H_{b}$ is provably entailed by $R_{S}(y) \wedge U(y)$ (by the universal property of $H_{b}$ as model presented by that formula) and it is not hard to see that a sequent 
\[
(R_{S}(x) \wedge U(x) \vdash_{x}  h(x)=0)
\]
is provable in $\tilde{\mathbb I}$ if and only if the sequent
\[
(C_{b}(x) \vdash_{x}  h(x)=0)
\]
is provable in $\tilde{\mathbb I}$.

Taking $\phi_{H_{b}}=C_{b}$, one sees immediately that the sequent 
\[
(\phi_{H_{b}}(f_{0}(y))) \vdash_{y} s(y)=0) 
\]
is provable in $\tilde{\mathbb I}$ if and only if $s\in T$ and similarly for the case of terms of target $0$.
\end{enumerate}
\end{remarks}

\begin{corollary}
Any $\mathbb I$-homogeneous model $H$ in which $H_{b}$ embeds satisfies the property that there exists an element $a_{0}\in H(\dom(f_{0}))$ such that any formula of the form $\sum_{s\in S} F_{s}(f_{0}(y))s(y)+ G(f_{0}(y))=0$ which is satisfied by $a_{0}$ is provable in $\mathbb I$. In particular, we have the following \emph{uniformity property} for the counterimages of the element $b$: if $\sum_{s\in S} H(F_{s})(b)H(s)(a_{0})+ H(G)(b)=0$ then for any other $a$ such that $H(f_{0})(a)=b$, we have that $\sum_{s\in S} H(F_{s})(b) H(s)(a)+ H(G)(b)=0$.  
\end{corollary}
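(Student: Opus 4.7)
The plan is to transport into $H$, via homogeneity, the generic preimage of $b$ constructed in Theorem \ref{thm_constructionA}, and then to read off the required identity from the explicit syntactic description of that preimage in the construction of the extension $A$.

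First I would apply Theorem \ref{thm_constructionA} to the finitely generated model $H_b$ and the distinguished pair $(f_0,g_0)$ (whose hypothesis $g_0(b)=0$ is part of the ambient setting, and is anyway forced by Theorem \ref{thm_exacthom} applied to $H$). This produces a finitely generated $\mathbb I$-model $A$ containing $H_b$ and an element $a\in A(\dom(f_0))$ with $A(f_0)(a)=b$. By Remark (ii) following Theorem \ref{thm_constructionA}, $A$ is presented as a $\tilde{\mathbb I}$-model by an irreducible formula $\{y.\,R_T(y)\wedge V(y)\}$ of the form described in Proposition \ref{prop_arrows}; in particular $A$ lies in $\textrm{f.p.}\,\tilde{\mathbb I}\textrm{-mod}(\Set)$.

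Next I would invoke homogeneity of $H$. The inclusion $H_b\hookrightarrow A$ between finitely presentable $\tilde{\mathbb I}$-models, together with the given embedding $H_b\hookrightarrow H$, yields by the very definition of homogeneous model a morphism $u:A\to H$ extending the inclusion; because every $\tilde{\mathbb I}$-model homomorphism is sortwise injective (the theory being the injectivization of $\mathbb I$, and completed to $\tilde{\mathbb I}$ without altering the model category), $u$ is itself an embedding. Set $a_0:=u(a)$, so that $H(f_0)(a_0)=b$ by functoriality.

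The central step is the genericity of $a_0$. Suppose that a relation $\sum_{s\in S}H(F_s)(b)\,H(s)(a_0)+H(G)(b)=0$ holds in $H$. Pulling it back along the embedding $u$, the equality $\sum_{s\in S}F_s(b)\,s(a)+G(b)=0$ holds in $A$. By the defining condition of the subspace $E(c)$ in the construction of $A$ in the proof of Theorem \ref{thm_constructionA}, this is equivalent to the provability in $\tilde{\mathbb I}$ of
\[
(\phi_{H_b}(f_0(y)) \vdash_y \sum_{s\in S} F_s(f_0(y))\,s(y)+G(f_0(y))=0),
\]
where $\phi_{H_b}$ is the canonical formula (for example the infinitary conjunction $C_b$ of the second remark after Theorem \ref{thm_constructionA}) characterizing $b$ in $H_b$. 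The uniformity clause is then automatic by soundness: for any other $a'\in H(\dom(f_0))$ with $H(f_0)(a')=b$, the premise $\phi_{H_b}(f_0(a'))=\phi_{H_b}(b)$ holds in $H$ because it already holds in the submodel $H_b$ (and geometric formulae are preserved by embeddings), hence the conclusion $\sum_{s\in S}H(F_s)(b)\,H(s)(a')+H(G)(b)=0$ is forced. I expect the main subtlety to be not in any of these steps individually, but in verifying the dictionary between an arbitrary linear identity holding at $a_0$ in $H$ and the specific normal form in which elements of $E(c)$ are written in the construction of $A$; once that correspondence is made precise, everything collapses to the defining equivalence of $E(c)$ together with homogeneity.
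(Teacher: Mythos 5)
Your proof is correct and carries out exactly what the paper's one-sentence proof gestures at: you transport the generic preimage $a$ of $b$ inside $A$ into $H$ via the injective homomorphism $u:A\to H$ supplied by homogeneity, and you then read off the genericity and uniformity claims from the defining membership condition of the subspace $E(c)$ together with soundness for $\tilde{\mathbb I}$. The only cosmetic slip is the parenthetical ``the theory being the injectivization of $\mathbb I$'' --- $\mathbb I$ is itself the injectivization and $\tilde{\mathbb I}$ its presheaf completion --- but the conclusion you draw from it (that $\tilde{\mathbb I}$-model homomorphisms are sortwise injective) is right, so this does not affect the argument.
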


\begin{proof}
This is an immediate consequence of the theorem obtained by explicitly unravelling what it means that there exists an injective homomorphism $A\to H$ extending the embedding $H_{b}\hookrightarrow H$.
\end{proof}

\section{A syntactic triangulated category}\label{sec:triangcat}

In this section we shall define a triangulated category directly built from the category of schemes by extracting the essential formal features which are common to all (known) (co)homology theories of schemes. We shall define this category by means of an inductive procedure involving an increasing sequence of theories. All our schemes will be over a fixed base $k$.

Let $L_{0}$ be the signature consisting of a sort $K_{0}$ (for formalizing the coefficients field), one sort for each pair $(X, n)$, where $n$ is a non-positive integer and $X$ a scheme over $k$ (we set $K_{0}=(\textrm{Spec}(k), 0)$), a function symbol $(f, n):(X, n)\to (Y, n)$ for each morphism of schemes $f:X\to Y$ over $k$, a function symbol $0:(X, n)\to (Y, m)$ for each pair of sorts (formalizing the zero arrows on homology groups) and the function symbols formalizing the structure of ring on $K_{0}$ and the structure of $K_{0}$-module on the sorts $(X, n)$. 

Let ${\mathbb T}_{0}$ be the theory over $L_{0}$ whose axioms are the functoriality axioms, the sequents defining the structure of ring on $K_{0}$ and of $K_{0}$-module on the other sorts, and the axioms ensuring that the all the function symbols $f$ are module homomorphisms. 

Let us denote by $\sim_{0}$ the equivalence relation on terms over $L_{0}$ given by: $t\sim t'$ if and only if the sequent $(\top \vdash t=t')$ is provable in ${\mathbb T}_{0}$. 

Let ${\cal C}_{0}$ be the algebraic syntactic category of the theory ${\mathbb T}_{0}$ (in which we only allow formulae of the form $\{\vec{x}. \top\}$ or of the form $\{\vec{x}.\vec{x}=0\}$). The arrows in such a category are given by equivalence classes of terms (in our case the equivalence relation being $\sim_{0}$).

We have a translation functor $T_{1}$ on the objects of ${\cal C}_{0}$ sending each $(X, n)$ to $(X, n-1)$; the set of distinguished triangles in ${\cal C}_{0}$ is set to consist just of the trivial ones (i.e., those of the form $(id: X \to X, 0:X\to 0)$).  

Notice that for any term $t:c_{1}, \ldots, c_{k}\to c$ over $L_{0}$ there is a translated term $T_{1}t:T_{1}c_{1}, \ldots, T_{1}c_{k}\to T_{1}c$. 

Let $L_{l+1}$ be the signature obtained from $L_{l}$ by adding a new sort $R_{t}$ for each ($\sim_{l}$-equivalence class of) term-in-context $t:c_{1}, \ldots, c_{m}\to c$ over $L_{l}$ which is not ${\mathbb T}_{l}$-provably equivalent to the identity and which does not belong to $L_{l-1}$, a function symbol $\pi_{t}:c\to R_{t}$ and function symbols $\delta^{i}_{t}:R_{t} \to T_{1}(c_{i})$ for each $i=1, \ldots, m$ (we denote by $\vec{\delta_{t}}$ the arrow $<\delta^{1}_{t}, \ldots, \delta^{k}_{t}>:R_{t}\to T_{1}(c_{1}) \times \cdots \times T_{1}(c_{k})$) 

\begin{center}
\begin{tikzpicture}
\node (A) at (0,0) {$c_{1}\times \cdots \times c_{k}$};
\node (B) at (2,0) {$c$};
\node (C) at (4,0) {$R_{t}$};
\node (D) at (9,0) {$T_{1}(c_{1}) \times \cdots \times T_{1}(c_{k})$};

\draw[->] (A) to node [midway, above] {$t$} (B);
\draw[dashed, ->] (B) to node [midway,above] {$\pi_{t}$} (C);
\draw[dashed, ->] (C) to node [midway,above] {$<\delta^{1}_{t}, \ldots, \delta^{k}_{t}>$} (D);
\end{tikzpicture}
\end{center}

{\flushleft
and for all terms $\vec{w}=(w_{1}:c_{1}, \ldots, c_{m} \to d_{1}, \ldots, w_{r}:c_{1}, \ldots, c_{m} \to d_{r})$ and $z:c\to d$ and $t':d_{1}, \ldots, d_{r}\to d$ (not already both in $L_{l-1}$) such that the sequent}
\[
(\top \vdash z\circ t=t' \circ \vec{w})
\] 
is provable in ${\mathbb T}_{l}$, a function symbol $R_{(t,t',\vec{w}, z)}:R_{t}\to R_{t'}$. 

We extend the translation functor $T_{1}$ to the new sorts $R_{t}$ by setting $T_{1}(R_{t})=R_{T_{1}(t)}$.

Let ${\mathbb T}_{l+1}$ be the theory obtained by expanding ${\mathbb T}_{l}$ with the following axioms: 
\[
(\top \vdash R_{(t,t',\vec{w}, z)} \circ \pi_{t}=\pi_{t'}\circ z); 
\]
\[
(\top \vdash  T_{1}(\vec{w})\circ \vec{\delta_{t}}=\vec{\delta_{t'}}\circ R_{(t,t',\vec{w}, z)})
\]

\begin{center}
\begin{tikzpicture}
\node (A) at (0,2) {$c_{1}\times \cdots \times c_{k}$};
\node (B) at (3,2) {$c$};
\node (C) at (5,2) {$R_{t}$};
\node (D) at (9,2) {$T_{1}(c_{1}) \times \cdots \times T_{1}(c_{k})$};

\node (A') at (0,0) {$d_{1}\times \cdots \times d_{r}$};
\node (B') at (3,0) {$d$};
\node (C') at (5,0) {$R_{t'}$};
\node (D') at (9,0) {$T_{1}(d_{1}) \times \cdots \times T_{1}(d_{r})$};

\draw[->] (A) to node [midway, above] {$t$} (B);
\draw[dashed,->] (B) to node [midway,above] {$\pi_{t}$} (C);
\draw[dashed, ->] (C) to node [midway,above] {$\vec{\delta_{t}}$} (D);

\draw[->] (A) to node [midway, right] {$<w_{1}, \ldots, w_{r}>$} (A'); 

\draw[->] (B) to node [midway, right] {$z$} (B'); 

\draw[dashed, ->] (C) to node [midway, right] {$R_{(t,t',\vec{w}, z)}$} (C');
 
\draw[->] (D) to node [midway, right] {$T_{1}(<w_{1}, \ldots, w_{r}>)$} (D');
 
\draw[->] (A') to node [midway, above] {$t$} (B');
\draw[dashed, ->] (B') to node [midway,above] {$\pi_{t}$} (C');
\draw[dashed, ->] (C') to node [midway,above] {$\vec{\delta_{t'}}$} (D');
\end{tikzpicture}
\end{center}
{\flushleft
plus the functoriality axioms}
\[
(\top \vdash R_{(t,t',\vec{w}, z)}=id)
\]
if $t=t'$, $\vec{w}=id$ and $z=id$;
\[
(\top \vdash  R_{(t',t'',\vec{w'}, z')} \circ R_{(t,t',\vec{w}, z)}=R_{(t,t'',\vec{w'} \circ \vec{w}, z'\circ z)}).
\]

Concerning distinguished triangles of the algebraic syntactic category ${\cal C}_{l+1}$ of the theory ${\mathbb T}_{l+1}$, we stipulate that they are those of the syntactic category ${\cal C}_{l}$ plus all the trivial ones, the ones isomorphic in ${\cal C}_{l}$ to those of the form $(t, \pi_{t}, \delta_{t})$, $(\pi_{t}, \delta_{t}, T_{1}(t))$, $(\delta_{t}, T_{1}(t), \pi_{T_{1}(t)})$ for all terms $t$ over the signature $L_{l}$ (so that the triangle functoriality axiom for triangulated categories is satisfied), and the ones isomorphic in ${\cal C}_{l}$ to those of the form $(R_{(u,v\circ u,id, v)}, R_{(v\circ u, v, u, id)}, T_{1}(\pi_{u})\circ \delta_{v})$ (for any composable terms $u$ and $v$ over $L_{l}$) and all its translations (so that the octahedral axiom is satisfied).  
  
We set $\cal C$ equal to the union of all the categories ${\cal C}_{l}$; we define the set of its distinguished triangles to be the union of the sets of distinguished triangles of the categories ${\cal C}_{l}$. The translation functor for the category $\cal C$ is simply given by the extension of the translation functors for the subcategories ${\cal C}_{l}$. As we shall see below, unlike the categories ${\cal C}_{l}$, the category $\cal C$ is triangulated. 

We denote by $\mathbb T$ the union of all the theories ${\mathbb T}_{l}$. The category $\cal C$ is the syntactic category of the theory $\mathbb T$. Indeed, the arrows in the latter are $\mathbb T$-provable equivalence classes of terms over the union $L$ of all the signatures $L_{l}$, and it is immediate to see that, given two terms $t$ and $t'$ lying in $L_{l}$, the sequent $(\top \vdash t=t')$ is provable in $\mathbb T$ if and only if it is provable in some ${\mathbb T}_{k}$, if and only if it is provable in ${\mathbb T}_{l}$; so all the arrows in $\cal C$ are in fact arrows in a category ${\cal C}_{l}$ for some $l$. 

\begin{theorem}
The category $\cal C$, equipped with the translation functor and the set of distinguished triangles specified above, is a triangulated category in which the filling conditions in the axioms of triangulated categories are canonically (but not necessarily uniquely) satisfied. 
\end{theorem}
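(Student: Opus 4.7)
The plan is to verify the triangulated-category axioms TR1--TR4 in turn, exploiting the fact that the tower $({\cal C}_{l},{\mathbb T}_{l})$ has been designed so that the data required by each axiom---cones, connecting morphisms, morphisms of triangles, octahedra---is generated as a new function symbol at stage $l+1$ whenever the inputs appear at stage $l$; since ${\cal C}=\bigcup_{l}{\cal C}_{l}$, every such obligation is discharged at some finite step. Before beginning I would fix the additive structure: the module axioms of ${\mathbb T}_{0}$ endow each hom-set with a $K_{0}$-module, in particular abelian group, structure, composition is bilinear because every function symbol in every $L_{l}$ is required to be a module homomorphism, finite products of sorts serve as biproducts, and the zero constants supply zero morphisms. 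The shift $T_{1}$ is additive since it operates on arrows by purely syntactic translation of terms; to make it an autoequivalence (as required by a triangulated category) one extends $L_{0}$ to allow all integers $n$ in the sorts $(X,n)$ rather than only $n\le 0$, a cosmetic change that does not affect any of the verifications below.

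TR1(a) is immediate: every arrow is represented by a term $t$ over some $L_{l}$, and the triangle $(t,\pi_{t},\vec{\delta_{t}})$ is declared distinguished in ${\cal C}_{l+1}\subseteq {\cal C}$; the trivial and isomorphism-closure clauses TR1(b,c) are built directly into the definition of the class of distinguished triangles. TR2 (rotation) follows because the definition declares simultaneously that $(t,\pi_{t},\vec{\delta_{t}})$, $(\pi_{t},\vec{\delta_{t}},T_{1}(t))$ and $(\vec{\delta_{t}},T_{1}(t),\pi_{T_{1}(t)})$ are distinguished; iterated rotations are obtained from this pattern by applying $T_{1}$, and inverse rotation uses its autoequivalence property. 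For TR3, given canonical triangles $(t,\pi_{t},\vec{\delta_{t}})$, $(t',\pi_{t'},\vec{\delta_{t'}})$ together with representatives $\vec{w},z$ of a commutative square---that is, terms such that $(\top\vdash z\circ t=t'\circ \vec{w})$ is provable in ${\mathbb T}$---the function symbol $R_{(t,t',\vec{w},z)}$ introduced at a later layer, together with its two defining sequents
\[
(\top\vdash R_{(t,t',\vec{w},z)}\circ \pi_{t}=\pi_{t'}\circ z), \qquad (\top\vdash T_{1}(\vec{w})\circ \vec{\delta_{t}}=\vec{\delta_{t'}}\circ R_{(t,t',\vec{w},z)}),
\]
supplies the required filling; for arbitrary distinguished triangles one first transports to canonical form via the exhibiting isomorphisms. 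TR4 is handled similarly: for composable $u,v$, the explicitly listed triangle $(R_{(u,v\circ u,id,v)},R_{(v\circ u,v,u,id)},T_{1}(\pi_{u})\circ \vec{\delta_{v}})$ together with the three canonical triangles on $u$, $v$, $v\circ u$ assembles into the required octahedron, the compatibilities being precisely the axioms imposed on the $R_{(-)}$ symbols in ${\mathbb T}_{l+1}$.

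The main obstacle I anticipate is careful layer-by-layer bookkeeping: one has to confirm that no axiom creates a circular dependency between ${\cal C}_{l+1}$ and ${\cal C}_{l}$---every new commutativity required at stage $l+1$ must be a consequence of the functoriality axioms added at that same stage or before---and that passing to the union over $l$ produces a class of distinguished triangles stable under rotation, inverse rotation, composition of octahedra, and the filling operation of TR3. The non-uniqueness warning in the statement reflects the price paid for canonicity: two pairs $(\vec{w},z)$ and $(\vec{w}',z')$ representing the same morphism of squares in ${\cal C}$ may yield distinct symbols $R_{(t,t',\vec{w},z)}$ and $R_{(t,t',\vec{w}',z')}$ that are not provably equal in ${\mathbb T}$, so that each square admits a distinguished filling but not a unique one.
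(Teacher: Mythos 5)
Your proof follows the same strategy as the paper's (much briefer) argument: cones are supplied by the inductive passage from ${\cal C}_l$ to ${\cal C}_{l+1}$, and the remaining triangulated-category axioms hold because the class of distinguished triangles is defined precisely to make them hold, with non-uniqueness traced to the isomorphism-closure clause in the definition of distinguished triangle. Your observation that $L_{0}$ should be enlarged to sorts $(X,n)$ for all integers $n$ so that $T_{1}$ becomes an autoequivalence is a genuine point the paper does not address, and your proposed fix is the right one and leaves the rest of the verification intact.
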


\begin{proof}
The inductive construction of $\cal C$ makes it possible for any morphism in a category ${\cal C}_{l}$ to have a canonically defined cone on it lying in the subcategory ${\cal C}_{l+1}$ (namely, $\pi_{t}$ if $t$ lies in ${\cal C}_{l}$ for $l$ minimal with this property). The definition of the distinguished triangles in $\cal C$ makes all the other axioms of triangulated categories canonically satisfied. Still, uniqueness does not hold, due to the requirement, in the definition of triangulated category, that any triangle which is isomorphic to a distinguished triangle should be distinguished (which has forced us to take all isomorphic copies of our canonical triangles in the definition of the distinguished triangles of $\cal C$). In fact, there might be more than one isomorphism between two (isomorphic) mapping cones. For instance, for a given morphism of schemes $u:X\to Y$, one can have isomorphic ways to complete it to a distinguished triangle: the canonical one $\pi_{u}:Y \to R_{u}$, and any other morphism $\pi_{u'\circ g}:Y \to R_{u'}$ where $u'$ and $g$ are respectively a morphism $X' \to Y'$ and an isomorphism $Y\to Y'$ such that $u'\circ f=g\circ u$ for some isomorphism $f$. The isomorphism between these two cones is given by $R_{(u, u', f, g)}$ and in general, for fixed $u, u'$ and $\pi_{u'}\circ g$, there can clearly be many such isomorphisms between them.  
\end{proof}

\begin{remarks}
\begin{enumerate}[(i)]
\item In the theory ${\mathbb T}'$ obtained from $\mathbb T$ by adding the axioms $(\top \vdash g\circ f=0)$ for each consecutive arrows $f$ and $g$ in a distinguished triangle of $\cal C$, every arrow in the syntactic category of ${\mathbb T}'$ has an equationally definable image. This is relevant in relation to our goal of showing that the known homology theories are homogeneous since under the hypotheses of Theorem \ref{thm_axiomatizationhomogeneous}, the sequent
\[
( R_{\vec{w}_{\ast}(S)}(\vec{y}) \wedge  \vec{w}_{\ast}(U)(\vec{y})  \vdash_{\vec{y}} (\exists \vec{x}) (\vec{y}=\vec{w}(\vec{x})))
\]
is satisfied by every homology theory as a consequence of exactness. This reduces the problem of proving homogeneity to that of checking the additional requirement $R_{S}(\vec{x}) \wedge U(\vec{x})$.

\item The dual of the triangulated category $\cal C$ maps into all the (triangulated) derived categories of sheaves on a given site. The relationship between our triangulated category and the formalism of derived categories is an interesting one. In the derived formalism the objects are complexes, the translation functor as well as the cone construction operate on them rather than on single (co)homology groups or arrows between them. Our construction is \ac punctual' and minimalist rather than \ac global' as the derived formalism, in that it exploits the fact that one can define derived functors starting from just an arrow rather than from a whole complex. Our construction extracts the essential structural features of the construction of (co)homology theories of schemes; indeed, in any geometric situation one can define the cone of a morphism $f:X\to Y$ in a completely canonical way.      

\item The theory $\mathbb T$ should play the role of the primitive theory from which we start in order to construct our theory of presheaf type as described in section \ref{sec:motivictheories}. Indeed, it extends in a natural way the empty theory over the language associated to Nori's quiver and, as remarked in point (i), reduces the strength of the homogeneity condition in relation to exactness. 

\item The cohomology of constant sheaves as well as of all the sheaves which can be obtained by applying geometric operations to them such as direct image is present in this framework.
\end{enumerate}
\end{remarks} 

\vspace{0.5cm}

\textbf{Acknowledgements:} I am very grateful to Luca Barbieri-Viale and Laurent Lafforgue for useful discussions on the subject matter of this paper, in particular for pointing out that the notion of triangulated category might be relevant for making all the images of arrows coming from Nori's diagram equationally definable.

\vspace{1cm}

\textsc{Olivia Caramello}

{\small \textsc{UFR de Math\'ematiques, Universit\'e de Paris VII, B\^atiment Sophie Germain, 5 rue Thomas Mann, 75205 Paris CEDEX 13, France}\\
\emph{E-mail address:} \texttt{olivia@oliviacaramello.com}}

\end{document}